\documentclass[12pt]{amsart}

\usepackage{amssymb}
\usepackage{pb-diagram}
\usepackage{enumitem}
\usepackage[dvipsnames]{xcolor}
\usepackage{multicol}

\usepackage{graphicx}
\usepackage{amsmath}
\numberwithin{equation}{section}
\usepackage[all]{xy}

\newtheorem{Theorem}{Theorem}[section]
\newtheorem{Proposition}[Theorem]{Proposition}
\newtheorem{Lemma}[Theorem]{Lemma}
\newtheorem{Corollary}[Theorem]{Corollary}
\newtheorem{Definition}[Theorem]{Definition}
\newtheorem{Remark}[Theorem]{Remark}
\newtheorem{Example}[Theorem]{Example}

\numberwithin{equation}{section}

\begin{document}

\baselineskip=16pt

\title[A Note On the moduli space of rank two vector bundles on Hirzebruch surfaces]{A Note On the moduli space of rank two vector bundles on Hirzebruch surfaces}

\author[L. Roa-Leguizam\'on]{Leonardo Roa-Leguizam\'on}
\address{Universidade Estadual de Campinas (UNICAMP) \\ Instituto de Matemática, Estatísitica e Computação Científica (IMECC) \\ Departamento de Matem\'atica \\
Rua S\'ergio Buarque de Holanda, 651\ \ 13083-970 Campinas-SP, Brazil.} \email{leoroale@unicamp.br}

\thanks{}

\subjclass[2010]{14H60, 14D20}

\keywords{moduli of vector bundles on surfaces, Segre invariant, Hirzebruch surfaces
stratification of the moduli space.}

\thanks{.}

\date{\today}

\begin{abstract} 
The aim of this paper is to study the Segre invariant for rank $2$ vector bundles on Hirzebruch surfaces   $\mathbb{F}_e$, $e \geq 0$.  We give necessary conditions in order to determine what numbers can appear as the Segre Invariant of a rank $2$ vector bundle on $\mathbb{F}_e$ with fixed Chern classes.  We compute a bound of the dimensions of the  strata whenever are non-empty. Finally, we present applications to  Brill-Noether Theory and describe differences between moduli spaces under change of polarization for rank $2$ vector bundles on $\mathbb{F}_e$.
\end{abstract}

\maketitle

\section{Introduction}

Let $X$ be a non-singular, irreducible complex projective surface and let $H$ be an ample divisor on $X$. Let $E$ be a vector bundle of rank $2$ on $X$ with Chern classes $c_1, c_2$. The Segre invariant is defined as
\[S_H(E) := \min\{c_1(E) \cdot H - 2 c_1(L) \cdot H\}
\]
where the minimum is taken over all line subbundles L of $E$.  This invariant induces a stratification of the moduli space $M_{X,H}(2,c_1,c_2)$ of $H-$stable vector bundles of rank $2$ on 
$X$ and fixed Chern classes $c_1$, $c_2$.  This stratification has been studied in \cite{ALH}  in order to get
topological and geometric properties of the moduli space $M_{\mathbb{P}^2}(2,c_1,c_2)$ and to give 
information about  Brill-Noether Theory
 for rank $2$ vector bundles on $\mathbb{P}^2$.  
 
The moduli space $M_{X,H}(n,c_1,c_2)$ was constructed by Maruyama \cite{Maruyama} in the 1970's and it has been studied by several authors.  However,  little is known about its geometry and
subvarieties.  In the case, $X= \mathbb{F}_e$  and $H$ an ample divisor on $\mathbb{F}_e$, there are general results about of moduli space $M_{\mathbb{F}_e,H}(2,c_1,c_2)$ (see for instance, \cite{Costa-Miro-Roig}, \cite{Aprodu} and \cite{Aprodu-Brinzanescu}). In particular, $M_{\mathbb{F}_e,H}(2,c_1,c_2)$ is smooth, irreducible, rational, quasi-projective variety of dimension $4c_2-c_1^2-3$, whenever is non-empty.

The aim of this paper is to use the Segre
invariant for rank $2$ vector bundles on Hirzebruch surfaces $\mathbb{F}_e$, $e \geq 0$, to give information about the moduli space $M_{\mathbb{F}_e,H}(2;-\alpha C_e-\beta F,c_2)$, $\alpha ,\beta \in \{0,1\}$ where $H:= C_e+(e+1)F$ is an ample divisor on $\mathbb{F}_e$.

Section 2 introduces the Segre invariant on surfaces,  and collects
a number of results on Hirzebruch surfaces that will be subsequently used. Section 3 is
the core of the paper. The main issue is to give necessary conditions to determine 
what numbers
can appear as the Segre invariant of a vector bundle on
$\mathbb{F}_e$ with fixed characteristic classes. The answer is
given by the following results:

\textbf{Theorem \ref{TP1}}
Let $H=C_e+(e+1)F$   be an ample divisor on $\mathbb{F}_e$, $e \geq 0$, and let \\
$\alpha, \beta \in \{0,1\}$, $c_2 \geq 2$, $a,b \in \mathbb{N}$, $a  \neq 0$ and set $s:=2(a-b)-\alpha-\beta >0$.  Let $Z \subset \mathbb{F}_e$ be a local complete intersection of codimension two of lenght $l(Z)=c_2-a^2e+ae\alpha-2ab -a\beta+b\alpha > 0$.  If \[
    \begin{aligned}  
        h^0(\mathbb{F}_e,  \mathcal{O}_{\mathbb{F}_e}((a-\alpha)C_e+ & (2(a-b)-\alpha-\beta-1)F \otimes I_Z)=  \\ & h^0(\mathbb{F}_e, \mathcal{O}_{\mathbb{F}_e}((2a-\alpha-1)C_e+(a-2b-\beta-1)F \otimes I_Z)=0, 
    \end{aligned}
\]
then there exists an $H-$stable vector bundle $E$  with Chern classes $c_1(E) = -\alpha C_e-\beta F$, $c_2$ and $S_H(E)=s$. 
Furthermore, $E$ can be written in the following exact sequence:
\[0 \rightarrow \mathcal{O}_{\mathbb{F}_e}(-aC_e+bF) \rightarrow E \rightarrow \mathcal{O}_{\mathbb{F}_e}((a-\alpha)C_e-(b+\beta)F) \otimes I_Z \rightarrow 0,\]
with  $\mathcal{O}_{\mathbb{F}_e}(-aC_e+bF) \subset E$ maximal.

\textbf{Theorem \ref{TP2}}
Let $H=C_e+(e+1)F$   be an ample divisor on $\mathbb{F}_e$, $e \geq 0$, and let $\alpha, \beta \in \{0,1\}$, $c_2 \geq 2$, $a,b \in \mathbb{N}$, $b  \neq 0$ and set $s:=2(b-a)-\alpha-\beta >0$.  Let $Z \subset \mathbb{F}_e$ be a local complete intersection of codimension two of lenght $l(Z)=c_2-a^2e-ae\alpha-2ab +a\beta-b\alpha > 0$.  If   
    \[ 
    \begin{aligned}
        h^0(\mathbb{F}_e,  \mathcal{O}_{\mathbb{F}_e}& (2(b-a)-\alpha-\beta-1)C_e+  (b-\beta)F \otimes I_Z)=  \\ & h^0(\mathbb{F}_e, \mathcal{O}_{\mathbb{F}_e}((b-2a-\alpha-1)C_e+(2b-\beta-1)F \otimes I_Z)=0, 
    \end{aligned}
    \]
then there exists an $H-$stable vector bundle $E$  with Chern classes $c_1(E) = -\alpha C_e-\beta F$, $c_2$ and $S_H(E)=s$. 
Furthermore, $E$ can be written in the following exact sequence:
\[0 \rightarrow \mathcal{O}_{\mathbb{F}_e}(aC_e-bF) \rightarrow E \rightarrow \mathcal{O}_{\mathbb{F}_e}(-(a+\alpha)C_e+(b-\beta)F) \otimes I_Z \rightarrow 0\]
with  $\mathcal{O}_{\mathbb{F}_e}(aC_e-bF) \subset E$ maximal. 

\textbf{Theorem \ref{TP3}}
 Let $H=C_e+(e+1)F$  be an ample divisor on $\mathbb{F}_e$, $e \geq 0$, and let $\alpha, \beta \in \{0,1\}$, $c_2 \geq 2$, $a,b \in \mathbb{N}-\{0\}$ and set $s:=2(a+b)-\alpha-\beta >0$.  Let $Z \subset \mathbb{F}_e$ be a local complete intersection of codimension two of lenght $l(Z)=c_2-a^2e+ae\alpha+2ab -a\beta-b\alpha >0$ satisfying any of the conditions of item (3), Lemma \ref{nosecciones}.   If 
 the pair 
\[
(\mathcal{O}_{\mathbb{F}_e}((2a-2)C_e + (2b-e-2-\beta)F), I_Z)
\]
satifies the property of Cayley-Bacharach (See Theorem \ref{CB}) and 
    \[
    \begin{aligned}
        h^0(\mathbb{F}_e,  \mathcal{O}_{\mathbb{F}_e}&  (2(a+b)-\alpha-\beta-1)C_e+  (b-\beta)F \otimes I_Z)= \\ &  h^0(\mathbb{F}_e, \mathcal{O}_{\mathbb{F}_e}((a-\alpha)C_e+  (2(a+b)-\alpha-\beta-1)F \otimes I_Z)= \\ & h^0(\mathbb{F}_e, \mathcal{O}_{\mathbb{F}_e}((2a-\alpha-1)C_e+  (a+2b-\beta-1)F \otimes I_Z) = \\ &
        h^0(\mathbb{F}_e, \mathcal{O}_{\mathbb{F}_e}((2a+b-\alpha-1)C_e+  (2b-\beta-1)F \otimes I_Z)=0, 
    \end{aligned}
    \]
then there exists an $H-$stable vector bundle $E$  with Chern classes $c_1(E) = -\alpha C_e-\beta F$, $c_2$ and $S_H(E)=s$. 
Furthermore, $E$ can be written in the following exact sequence:
\[0 \rightarrow \mathcal{O}_{\mathbb{F}_e}(-aC_e-bF) \rightarrow E \rightarrow \mathcal{O}_{\mathbb{F}_e}((a-\alpha)C_e+(b-\beta)F) \otimes I_Z \rightarrow 0\]
with  $\mathcal{O}_{\mathbb{F}_e}(-aC_e-bF) \subset E$ maximal.

In section 4. we prove  that if the stratum 
$M_{\mathbb{F}_e}(2;-\alpha C_e-\beta F,c_2;s)$ is non-empty, then the elements are parameterized by extensions. Thus, a bound of the dimension of the stratum is
obtained by \lq\lq counting parameters" of such extensions. We have the following results: 

\textbf{Theorem \ref{dimmax1}} 
Let $H=C_e+(e+1)F$  be an ample divisor on $\mathbb{F}_e$, $e \geq 0$ and let $ \beta \in \{0,1\}$, $c_2 \geq 2$, $s, l_1,l_2, l_3 > 0$.  Suppose that there exist $a_i,b_i \in \mathbb{N}$,  for $i=1,2,3$ such that 
\[
\begin{aligned}
    s  = 2(a_1-b_1)-\beta & = 2(b_2-a_2)-\beta = 2(a_3+b_3)-\beta, \,\,\, a_1, b_2 ,a_3,b_3 \neq 0 \\
    l_1 &= c_2-a_1^2e-2a_1b_1-a_1\beta \\
    l_2 &= c_2-a_2^2e-2a_2b_2 +a_2\beta \\
    l_3 &= c_2-a_3^2e+2a_3b_3-a_3\beta
\end{aligned}\] 
Let $D_1:= a_1C_e-b_1F$ be a divisor on $\mathbb{F}_e$, and let $Z_1 \subset \mathbb{F}_e$ be a local complete intersection of codimension two of lenght $l_1$,   satisfiying the hypothesis of  Theorem \ref{TP1} taking $\alpha =0$. Let $D_2:= -a_2C_e+b_2F$ be a divisor on $\mathbb{F}_e$, and  let $Z_2 \subset \mathbb{F}_e$ be a local complete intersection of codimension two of lenght $l_2$,   satisfiying the hypothesis of  Theorem \ref{TP2} taking $\alpha =0$.  Let $D_3:= a_3C_e+b_3F$ be a divisor on $\mathbb{F}_e$,  and let $Z_3 \subset \mathbb{F}_e$ be a local complete intersection of codimension two of lenght $l_3$,   satisfiying the hypothesis of  Theorem \ref{TP3} taking $\alpha =0$. Then, the stratum $M_{\mathbb{F}_e,H}(2;-\beta F,c_2;s)$ is non-empty and it has an irredubible subvariety of dimension
\[\begin{aligned}
     \max_{a_i,b_i} \{ 3l+ \frac{1}{2} (2a_1-1)(4b_1+2a_1e+2\beta+2),  & \,\,  3l+ \frac{1}{2}(2a_2+1)(4b_2-2a_2e-2\beta-4e-2), \\ & 3l+ \frac{1}{2}(2a_3-1)(2\beta+2a_3e-4b_3+2) \}
\end{aligned}\]
where the maximum is taken over all numbers $a_i,b_i$ that satisfies  the hypothesis of theorem. Moreover, 
\[\begin{aligned}
   \dim\, & M_{\mathbb{F}_e}(2; -\beta F,c_2;s) \geq 
    \max_{a_i,b_i} \{ 3l+ \frac{1}{2} (2a_1-1)(4b_1+2a_1e+2\beta+2), \\ & 3l+ \frac{1}{2}(2a_2+1)(4b_2-2a_2e-2\beta-4e-2),  \,\, 3l+ \frac{1}{2}(2a_3-1)(2\beta+2a_3e-4b_3+2) \}
\end{aligned}\]    

\textbf{Theorem \ref{dimmax2}} 
Let $H=C_e+(e+1)F$  be an ample divisor on $\mathbb{F}_e$, $e \geq 0$ and let $ \beta \in \{0,1\}$, $c_2 \geq 2$, $s > 0$.  Suppose that there exist $a_i,b_i \in \mathbb{N}$, $b_i \neq 0$ for $i=1,2,3$ such that  \[\begin{aligned}
    s= 2(a_1-b_1)-\beta-1 & = 2(b_2-a_2)-\beta-1 = 2(a_3+b_3)-\beta-1 \,\,\, a_2, b_1 ,a_3,b_3 \neq 0 \\
    l_1 & = c_2-a_1^2e+a_1e-2a_1b_1+b_1-a_1\beta\\
    l_2 & =  c_2-a_2^2e-a_2e-2a_2b_2-b_2 +a_2\beta \\
    l_3 & = c_2-a_3^2e+a_3e+2a_3b_3-b_3 -a_3\beta\\
\end{aligned}\] 
Let $D_1:= a_1C_e-b_1F$ be a divisor on $\mathbb{F}_e$, and  let $Z_1 \subset \mathbb{F}_e$ be a local complete intersection of codimension two and lenght $l_1$,   satisfiying the hypothesis of  Theorem \ref{TP1} taking $\alpha =1$. Let $D_2:= -a_2C_e+b_2F$ be a divisor on $\mathbb{F}_e$, and  let $Z_2 \subset \mathbb{F}_e$ be a local complete intersection of codimension two and lenght $l_2$,   satisfiying the hypothesis of  Theorem \ref{TP2} taking $\alpha =1$. Let $D_3:= a_3C_e+b_3F$ be a divisor on $\mathbb{F}_e$,   and let $Z_3 \subset \mathbb{F}_e$ be a local complete intersection of codimension two and lenght $l_3 $,   satisfiying the hypothesis of  Theorem \ref{TP3} taking $\alpha =1$. Then, the stratum $M_{\mathbb{F}_e,H}(2;- C_e-\beta F,c_2;s)$ is non-empty and it has an irredubible subvariety of dimension
\[\begin{aligned}
\max_{a_i,b_i}\{
   & 3l+ \frac{1}{2} (2a_1-2)(4b_1+2a_1e- e+2\beta+2), \\
   & 3l+ \frac{1}{2}(2a_2+2)(4b_2-2a_2e- e-2\beta-4e-2), \,\,  3l+ \frac{1}{2}(2a_3-2)(2\beta+2a_3e -4b_3+1) \}
   \end{aligned}
\]
where the maximum is taken over all numbers $a_i,b_i$ that satisfies  the hypothesis of theorem. Moreover, 
\[\begin{aligned}
   \dim\, & M_{\mathbb{F}_e}(2;- C_e -\beta F,c_2;s) \geq 
    \max_{a_i,b_i}\{
   3l+ \frac{1}{2} (2a_1-2)(4b_1+2a_1e- e+2\beta+2), \\
   & 3l+ \frac{1}{2}(2a_2+2)(4b_2-2a_2e- e-2\beta-4e-2), \,\, 3l+ \frac{1}{2}(2a_3-2)(2\beta+2a_3e -4b_3+1) \}
   \end{aligned}
\]

We finish the paper by computing a lower bound for the dimension
of the Brill-Noether varieties. Thus, we can show the existence of nonempty Brill-Noether locus with
negative Brill-Noether.  Also, we give a  description of the differences between the moduli spaces $M_{\mathbb{F}_e, H_1}(2;-C_e-\beta F,c_2)$ and $M_{\mathbb{F}_e, H_t}(2;-C_e-\beta F,c_2)$, $t > 0$ where $\beta \in \{0,1\}$ and  $H_t:= C_e+(e+t)F$, $t> 0$ is an ample divisor on $\mathbb{F}_e$.


\section{Preliminaries}

We start this section by recalling the main results about Segre invariant for rank $2$ vector bundles on surfaces, for more details see \cite{ALH},  and we recall the main results that we will use on Hirzebruch surfaces $\mathbb{F}_e$, $e \geq 0$, and on cohomology of line bundles on $\mathbb{F}_e$. For a further treatment of the subject see \cite{Hartshorne1} and \cite{Coskun-Huizenga}.

\subsection{Segre invariant on surfaces}

 Let $X$ be a smooth, irreducible complex projective surface and let $H$ be an ample divisor on $X$. For  a vector bundle $E$ of rank $2$ with Chern classes $c_1, c_2$ on $X$  the \textit{Segre invariant} $S_H(E)$ is defined as 
\[S_H(E) = c_1(E) \cdot H - 2 \max \{c_1(L) \cdot H\}\]
where the maximum is taken over all subline bundles $L$ of $E$.  Recall that the $H-$slope of a rank $2$ vector bundle $E$ denoted by $\mu_H(E)$, is the quotient
\[\mu_H(E) := \frac{c_1(E) \cdot H}{2}\]
and $E$ is called $H-$stable, if for all line bundle $L \subset E$, we have
\[c_1(L) \cdot H < \mu_H(E)\]
So, the Segre invariant can be wirtten as
\[S_H(E) = 2 \min_{L \subset E} \{\mu_H(E) - c_1(L) \cdot H\}\]
where the minimum is taken over all line subbundles $L$ of $E$.

The Segre invariant is always a finite number.

\begin{Lemma} Let  $H$ be  an ample line bundle on $X$ and let $E$ be a rank $2$ vector bundle on $X$. Then,
the set:
$$\{ L \cdot H : L\subset E, L \text{ a line bundle} \},$$
is bounded from above.
\end{Lemma}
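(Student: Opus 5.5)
The plan is to bound $c_1(L)\cdot H$ by the $H$-degree of the largest twist of $E$ that has a global section. If $L \subset E$ is a line subbundle (more generally, any rank one subsheaf, which we may take to be a line bundle since $X$ is a smooth surface and $E$ is reflexive), then the inclusion gives a nonzero section of $E\otimes L^{-1}$. So it suffices to show the following: there is a constant $N$, depending only on $E$ and $H$, such that $H^0(X, E\otimes L^{-1}) \neq 0$ forces $c_1(L)\cdot H \le N$.

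To get such an $N$, we first fix a very ample multiple $mH$ of $H$. Choose a smooth irreducible curve $C \in |mH|$; such a curve exists by Bertini. Next we take $n$ large enough that $E$ embeds in a direct sum of copies of $\mathcal{O}_X(n_0 H)$ for some $n_0$. For instance, $E^\vee(kH)$ is globally generated for $k\gg 0$, so there is a surjection $\mathcal{O}_X^{\oplus r}\to E^\vee(kH)$; dualizing gives an injection $E \hookrightarrow \mathcal{O}_X(kH)^{\oplus r}$. A nonzero section of $E\otimes L^{-1}$ then yields a nonzero section of $\mathcal{O}_X(kH)\otimes L^{-1}$, obtained by projecting to a suitable factor. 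Hence $kH - c_1(L)$ is effective, and therefore $(kH - c_1(L))\cdot H \ge 0$, since $H$ is ample and meets every effective divisor nonnegatively. This gives $c_1(L)\cdot H \le kH^2$, which is a bound independent of $L$.

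The only step needing care is the global generation of $E^\vee(kH)$ and the dualizing step. Global generation is Serre's theorem applied to the ample divisor $H$. For dualizing, note that the dual of a surjection of vector bundles $\mathcal{O}^{\oplus r}\to E^\vee(kH)$ is an injection $E(-kH)\to \mathcal{O}^{\oplus r}$, and then we twist by $kH$. I do not expect any real obstacle here. With this argument the curve $C$ is unnecessary: it would only be needed in an alternative approach that restricts to a curve and uses boundedness of degrees of subbundles there. I would therefore present the short argument above.
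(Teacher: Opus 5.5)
Your argument is correct and complete, and it follows a different route from the paper only in the sense that the paper gives no argument at all. It simply asserts that the proof of Maruyama's Lemma 1.1, which it cites from his notes on ruled surfaces and describes as covering a less general statement, carries over verbatim.

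Your proof is self-contained:
\begin{itemize}
\item By Serre's theorem, $E^\vee(kH)$ is globally generated for $k\gg 0$.
\item Dualizing the resulting surjection embeds $E\hookrightarrow \mathcal{O}_X(kH)^{\oplus r}$.
\item Any nonzero map $L\to E$ therefore yields a nonzero section of $\mathcal{O}_X(kH)\otimes L^{-1}$.
\item Ampleness of $H$ gives $D\cdot H\ge 0$ for effective $D$, hence $c_1(L)\cdot H\le kH^2$.
\end{itemize}

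What your version buys over the citation:
\begin{itemize}
\item An explicit bound depending only on $E$ and $H$, with no need to check that someone else's proof generalizes.
\item It works unchanged on any projective variety, with $H^{n-1}$ in place of $H$.
\item It bounds $c_1(L)\cdot H$ for every line bundle with $\mathrm{Hom}(L,E)\neq 0$, not just subbundles. This is the same mechanism the paper uses later in the proof of Theorem \ref{TP1}, where subline bundles are ruled out by showing $H^0(E\otimes L^\vee)=0$.
\end{itemize}

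Two cosmetic points. First, the very ample multiple $mH$ and the Bertini curve $C\in|mH|$ play no role and should simply be deleted, as you yourself note. Second, your parenthetical about rank-one subsheaves is imprecise: such a subsheaf has the form $M\otimes I_Z$, which is not itself a line bundle. Nothing is lost, though, since $\mathrm{Hom}(M\otimes I_Z,E)=H^0(E\otimes M^{-1})$ for $Z$ of codimension two and $E$ locally free, so the same bound applies. Neither point affects the lemma as stated.
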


\begin{proof} 
The result follows as 
a slight generalization of \cite[Lemma 1.1]{Maruyama2} for which the same proof works.
\end{proof}

The term invariant is used because $S_H(E) = S_H(E \otimes L)$ for any line bundle $L\in Pic(X)$ and $E$ is $H-$ stable if and only if $S_H(E) > 0$. We say that $L \subset E$ is $H-$\textit{maximal} if
\[S_H(E) = c_1(E) \cdot H - 2c_1(L) \cdot H.\]
For simplicity of notation, for a fixed $H$ we write maximal instead $H-$maximal. 

\begin{Remark}\label{NotUnique}\emph{
Let $H$  be an ample divisor on $X$ and let $E$ be a rank $2$ vector bundle on $X$ with Chern classes $c_1$, $c_2$.  If  $S_H(E):=s$, then there exists a line bundle $L \subset E$ such that
\[s = c_1(E) \cdot H -2(c_1(L) \cdot H)\]
which is equivalent to 
\[c_1(L) \cdot H = \frac{c_1(E) \cdot H-s}{2}\]
and $E$ can be written in the exact sequence
\[0 \rightarrow L \rightarrow E \rightarrow L'\otimes I_Z \rightarrow 0\]
where $Z$ is a locally complete interesection of codimension two of length $l(Z) = c_2-(c_1(L)\cdot H)(c_1(L')\cdot H)$.  When $X$ is a Hirzebruch surface $\mathbb{F}_e$, $e \geq 0$
the line bundle $L$ is not unique, because  $Pic(\mathbb{F}_e)= \mathbb{Z}C_e \oplus \mathbb{Z} F$, where $C_e$ is the class of the unique section of self-intersection $-e$ and $F$ is the class of a fiber of the projection to $\mathbb{P}^1$.}
\end{Remark}

\begin{Remark}\emph{
In general, a subline bundle does not define a torsion-free quotient.  Howe-\\ver, any maximal line bundle $L \subset E$ defines a corresponding torsion free quotient which fit in the exact sequence
\[0 \rightarrow L \rightarrow E \rightarrow L' \otimes I_Z \rightarrow 0,\]
where $Z \subset X$ is a local complete intersection of codimension $2$, (see \cite[Proposition 5.]{Friedman}).}
\end{Remark}

Segre invariant induces a stratification of the moduli space of vector bundles:

\begin{Lemma}\label{semicontinuous}
Let  $H$ be an ample divisor on $X$ and let $T$ be a variety. Let $\mathcal{E}$ be a vector bundle of rank $2$ on $X \times T$.  The function
\begin{eqnarray*}
S_H: T &\longrightarrow& \mathbb{Z} \\
t &\longmapsto& S_H(\mathcal{E}_t)
\end{eqnarray*}
is lower semicontinuous.
\end{Lemma}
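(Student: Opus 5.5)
Lower semicontinuity means that for every integer $k$ the set $\{t \in T : S_H(\mathcal{E}_t) > k\}$ is open, or equivalently that the set
\[
B_k := \{t \in T : S_H(\mathcal{E}_t) \leq k\}
\]
is closed. I would prove closedness by exhibiting $B_k$ as the image of a proper morphism. By definition, $S_H(\mathcal{E}_t)\le k$ means there is a line bundle $L \subset \mathcal{E}_t$ with $c_1(L)\cdot H \geq (c_1\cdot H - k)/2$. Since $c_1(\mathcal{E}_t)$ is constant in the family, the condition depends only on the numerical class of $L$. We may also replace $L$ by its saturation. This does not decrease $c_1(L)\cdot H$, because saturating adds an effective divisor and $H$ is ample. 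So we may assume $L$ is a maximal subsheaf with torsion-free quotient $L'\otimes I_Z$, as in the Remark after Remark \ref{NotUnique}.

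Key steps, in order:

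\emph{First}, show that only finitely many classes $c_1(L)\in NS(X)$ are relevant. For each $t$ the set $\{c_1(L)\cdot H\}$ is bounded above by the boundedness Lemma, applied uniformly: $\mathcal{E}$ is a bounded family, so Maruyama's argument gives a bound independent of $t$. We only consider $c_1(L)\cdot H \ge (c_1\cdot H-k)/2$, so $c_1(L)\cdot H$ lies in a bounded interval. The quotient $L'\otimes I_Z$ has $c_1(L') = c_1 - c_1(L)$ and $l(Z) = c_2 - c_1(L)\cdot c_1(L') \ge 0$. This bounds $c_1(L)^2$ from below in terms of $c_1(L)\cdot H$. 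Combining this with the Hodge index theorem and the boundedness of $c_1(L)\cdot H$, only finitely many classes $\xi = c_1(L)$ occur; for $X=\mathbb{F}_e$ this is a direct computation in $\mathbb{Z}C_e\oplus\mathbb{Z}F$. Consequently only finitely many Hilbert polynomials $P$ occur for the quotients.

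\emph{Second}, for each such polynomial $P$, take the relative Quot scheme $\pi_P:\mathrm{Quot}_{X\times T/T}(\mathcal{E},P)\to T$. It parametrizes torsion-free rank one quotients $\mathcal{E}_t \to L'\otimes I_Z$ with the given Hilbert polynomial, and it is projective over $T$. Then $B_k$ is the union of the images $\pi_P(\mathrm{Quot})$ over the finitely many admissible $P$. Rank one torsion-free quotients form an open condition, but the kernel of any quotient with the right Hilbert polynomial is a rank one reflexive sheaf, hence a line bundle on a smooth surface. If the quotient has torsion, we pass to the saturation, which only increases $c_1(L)\cdot H$. So we can use the full Quot scheme for all $P$ with $c_1\cdot H$ at least the threshold; this is still a finite set by the first step. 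Properness then makes each image closed, so $B_k$ is a finite union of closed sets and hence closed.

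The main obstacle is the first step: obtaining a bound uniform in $t$ and reducing to finitely many Hilbert polynomials, so that only finitely many Quot schemes are needed. For uniformity I would first try a Castelnuovo–Mumford regularity argument on the bounded family $\{\mathcal{E}_t\}$. Choose $m$ with $\mathcal{E}_t(mH)$ globally generated for all $t$ (after shrinking to a quasi-compact $T$; since the statement is local on $T$, this is harmless). Then the dual surjection $\mathcal{E}_t^\vee(mH)\to L^\vee(mH)$ onto a rank one sheaf forces $L^\vee(mH)$ to have a nonzero section. This gives $c_1(L)\cdot H \le mH^2$ uniformly in $t$.
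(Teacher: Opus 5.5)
Your argument is essentially the paper's route: the paper only defers to Maruyama's proof of openness of stability, and the mechanism there is the same as yours, namely properness of the relative Quot scheme over $T$ together with a uniform boundedness of the relevant quotients, which you spell out here for the Segre invariant.

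Two small repairs are needed.

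\emph{The uniform bound.} You assumed global generation of $\mathcal{E}_t(mH)$, not of $\mathcal{E}_t^\vee(mH)$, and the dual map $\mathcal{E}_t^\vee\to L^\vee$ is not surjective when $Z\neq\emptyset$. Get the bound instead from the globally generated quotient $\mathcal{E}_t(mH)\twoheadrightarrow L'\otimes I_Z(mH)$, which gives $c_1(L)\cdot H\le c_1\cdot H+mH^2$.

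\emph{The indexing set.} Index only over the finitely many $P$ produced in your first step. The set of all rank-one $P$ with the right $H$-degree is infinite because of zero-dimensional torsion. For an arbitrary point of the full $\mathrm{Quot}_P$, the kernel need not be reflexive. Its saturation, however, is a line subbundle of no smaller $H$-degree, so $\pi_P(\mathrm{Quot}_P)\subseteq B_k$ still holds.
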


\begin{proof}
The semicontinuity follows as a slight generalization of the openness property of stability for which the same proof works (see \cite[Theorem 2.8]{Maruyama})
\end{proof}

The moduli space of $H$-stable vector bundles with fixed Chern classes $c_1$ and $c_2$ on $X$ were constructed in the 1970's by
Maruyama (see \cite{Maruyama1}).  We shall denote the moduli space of $H-$stable vector bundles of rank $n$ and Chern classes $c_1$ and $c_2$ on $X$ by $M_{X,H}(n; c_1,c_2)$. In case $X$ is a smooth, projective rational surface the moduli space $\dim M_{X,H}(2, c_1,c_2) $ is either empty or a smooth irreducible variety of dimension   $\dim M_{X,H}(2, c_1,c_2)=4c_2-c_1^2-3$.

\subsection{Hirzebruch surfaces}

Let $e \geq 0$ be a non-negative integer and let $\mathbb{F}_e$ denote the ruled surface $\mathbb{P}(\mathcal{O}_{\mathbb{P}^1} \oplus \mathcal{O}_{\mathbb{P}^1}(e))$. Let $\pi: \mathbb{F}_e \rightarrow \mathbb{P}^1$ be the natural projection.  When $e \geq 1$, let $C_e$ be the class of the unique section of self-intersection $-e$ and let $F$ denote the class of a fiber of the projection to $\mathbb{P}^1$. The surface $\mathbb{F}_0$ is isomorphic to $\mathbb{P}^1 \times \mathbb{P}^1$. In that case, let $C_e$ and $F$ denote the classes of the two rulings.  The intersection pairing on $\mathbb{F}_e$ is given by
\[C_e^2=-e, \,\,\,\, F^2= 0, \,\,\,\, C_e \cdot F =1. \]
A  canonical divisor $K_{\mathbb{F}_e}$ is given by $ -2C_e-(e+2)F$. $Pic(\mathbb{F}_e) \cong \mathbb{Z} C_e \oplus \mathbb{Z}F$ and $\chi(\mathcal{O}_{\mathbb{F}_e})=1$.

From Riemann- Roch Theorem, it follows that 
\[\chi(\mathcal{O}_{\mathbb{F}_e}(aC_e+bF))= (a+1)(b+1) -e \frac{a(a+1)}{2}.\]

\begin{Remark} \label{efample} \emph{(\cite[Corollary 2.18]{Hartshorne1})
Let $D$ be the divisor $aC_e+bF$ on the Hirzebruch surface $\mathbb{F}_e$, $e \geq 0$.  Then
\begin{itemize}
\item [(i)] $D$ is effective if and only if $a, b \geq 0$.
\item [(ii)] $D$ is ample if and only if $a > 0$ and $b > ae$.
\end{itemize}}
By Serre duality, it terms out that  $h^2(\mathbb{F}_e, \mathcal{O}_{\mathbb{F}_e}(aC_e+bF))> 0$ if and only if $a < -2$ and $b < -e-2$.
\end{Remark}

\begin{Theorem} \cite[Theorem 2.1]{Coskun-Huizenga} \label{CH}
Let $L= \mathcal{O}_{\mathbb{F}_e}(aC_e+bF)$ be a line bundle on $\mathbb{F}_e$. Then
\begin{itemize}
\item [(i)] We have
\[\chi(L) = (a+1)(b+1)-e\frac{a(a+1)}{2}.\]
\item [(ii)] If $L \cdot F \geq -1$, then $h^2(\mathbb{F}_e, L)=0$.
\item [(iii)] If $L \cdot F \leq -1$, then $h^0(\mathbb{F}_e, L)=0$.
\item [(iv)] In particular, if $L \cdot F =-1$, then $L$ has no cohomology in any degree.
Now suppose $L \cdot F > -1$.  Then $h^2(\mathbb{F}_e, L)=0$, so either of the numbers $h^0(\mathbb{F}_e, L)$ or $h^1(\mathbb{F}_e, L)$ determine the cohomology of $L$. These can be determined as follows.
\item [(v)] If $L \cdot C_e \geq -1$, then $H^1(\mathbb{F}_e, L)=0$, and so $h^0(\mathbb{F}_e, L) = \chi(L)$.
\item [(vi)] If $L \cdot C_e < -1$, then $H^0(\mathbb{F}_e, L) \equiv H^0(\mathbb{F}_e, L(-C_e))$, and so the cohomology of $L$ can be determined indutively using $(iii)$ and $(v)$.
If $L \cdot F < -1$ then the cohomology of $L$ can be determined by Serre duality.
\end{itemize}
\end{Theorem}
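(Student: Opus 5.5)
The plan is to treat $\mathbb{F}_e$ through the projection $\pi:\mathbb{F}_e\to\mathbb{P}^1$ and to reduce everything to line bundles on $\mathbb{P}^1$. Three tools do the work: Riemann--Roch, Serre duality with $K_{\mathbb{F}_e}=-2C_e-(e+2)F$, and restriction to fibres and to the section $C_e$. Throughout, write $L=\mathcal{O}_{\mathbb{F}_e}(aC_e+bF)$, so that
\[L\cdot F=a, \qquad L\cdot C_e=b-ae.\]

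For (i) I apply Riemann--Roch, $\chi(L)=\chi(\mathcal{O}_{\mathbb{F}_e})+\tfrac12 D\cdot(D-K_{\mathbb{F}_e})$ with $D=aC_e+bF$ and $\chi(\mathcal{O}_{\mathbb{F}_e})=1$. The intersection numbers are
\[D^2=-ea^2+2ab, \qquad D\cdot K_{\mathbb{F}_e}=ae-2a-2b.\]
Hence
\[\chi(L)=1+ab+a+b-e\tfrac{a(a+1)}{2}=(a+1)(b+1)-e\tfrac{a(a+1)}{2}.\]

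For (iii), suppose $a<0$. Then $L$ restricts to $\mathcal{O}_{\mathbb{P}^1}(a)$ on every fibre. A global section therefore vanishes on every fibre, and since the fibres cover the surface it vanishes identically.

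For (ii), Serre duality gives $h^2(L)=h^0(K_{\mathbb{F}_e}-L)$. The fibre degree of $K_{\mathbb{F}_e}-L$ is $-2-a$, which is $\le -1$ when $a\ge -1$, so $h^2(L)=0$ by (iii).

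For (iv), if $a=-1$ then $h^0=h^2=0$ by (ii) and (iii). The formula in (i) gives $\chi=0$ at $a=-1$, so $h^1=0$ as well.

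For (v), I assume $a\ge 0$ and $b-ae\ge -1$, and use the Leray spectral sequence. Since $L$ has non-negative degree on each fibre, $R^1\pi_*L=0$. The direct image is
\[\pi_*L\cong \operatorname{Sym}^a(\mathcal{O}_{\mathbb{P}^1}\oplus\mathcal{O}_{\mathbb{P}^1}(-e))\otimes\mathcal{O}_{\mathbb{P}^1}(b)=\bigoplus_{i=0}^{a}\mathcal{O}_{\mathbb{P}^1}(b-ie).\]
The smallest degree occurring is $b-ae\ge -1$, so $H^1(\mathbb{P}^1,\pi_*L)=0$, and hence $H^1(\mathbb{F}_e,L)=0$. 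Combined with (ii), this gives $h^0(L)=\chi(L)$.

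For (vi), I use the restriction sequence
\[0\to L(-C_e)\to L\to L|_{C_e}\to 0.\]
Here $C_e\cong\mathbb{P}^1$ and $L|_{C_e}=\mathcal{O}_{\mathbb{P}^1}(b-ae)$ has degree $<-1$, so it has no sections, and therefore $H^0(L(-C_e))\cong H^0(L)$. Passing from $L$ to $L(-C_e)$ lowers $a$ by one and raises $L\cdot C_e$ by $e$. Iterating, one eventually reaches either $a<0$, where (iii) gives zero, or $L\cdot C_e\ge -1$, where (v) applies. Since $h^2=0$, knowing $h^0$ and $\chi$ determines $h^1$. The case $L\cdot F<-1$ is reduced by Serre duality to $K_{\mathbb{F}_e}-L$, whose fibre degree is $-2-a>-1$.

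The step I expect to need the most care is the identification of $\pi_*L$ in (v), in particular which twist convention for $\mathbb{F}_e=\mathbb{P}(\mathcal{O}\oplus\mathcal{O}(e))$ makes $C_e$ correspond to the summand $\mathcal{O}(-e)$. I would fix it by checking against the cases $a=0$ and $a=1$, where $h^0(\mathcal{O}(C_e))=1$ must hold.
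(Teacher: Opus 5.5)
The paper gives no proof of this statement. It is quoted from \cite[Theorem 2.1]{Coskun-Huizenga} and used as a black box, so there is no internal argument to compare yours against.

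Your proof is correct and complete as a self-contained verification:
\begin{itemize}
\item The Riemann--Roch computation checks out, with $D^2=-ea^2+2ab$ and $D\cdot K_{\mathbb{F}_e}=ae-2a-2b$.
\item Parts (ii)--(iv) follow from fibre restriction and Serre duality exactly as you say.
\item The iteration in (vi) terminates because $a$ drops at each step.
\item In (v) you rightly impose $a\ge 0$. This is the standing hypothesis $L\cdot F>-1$ of the statement, and it is genuinely needed: for $a=-2$, $b\ge 0$ one has $L\cdot C_e\ge -1$, but $h^1(L)=h^1(\mathcal{O}_{\mathbb{F}_e}(-(e+2+b)F))=e+b+1$.
\end{itemize}

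The convention issue you flag in (v) can be settled intrinsically rather than by testing cases. Push forward $0\to\mathcal{O}_{\mathbb{F}_e}\to\mathcal{O}_{\mathbb{F}_e}(C_e)\to\mathcal{O}_{C_e}(C_e)\to 0$ to get $0\to\mathcal{O}_{\mathbb{P}^1}\to\pi_*\mathcal{O}_{\mathbb{F}_e}(C_e)\to\mathcal{O}_{\mathbb{P}^1}(-e)\to 0$. This splits because $H^1(\mathbb{P}^1,\mathcal{O}_{\mathbb{P}^1}(e))=0$. Since $\mathcal{O}_{\mathbb{F}_e}(C_e)$ is the relative $\mathcal{O}(1)$ of $\mathbb{P}(\pi_*\mathcal{O}_{\mathbb{F}_e}(C_e))$, your formula $\pi_*L\cong\bigoplus_{i=0}^{a}\mathcal{O}_{\mathbb{P}^1}(b-ie)$ holds with no choice of convention.

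Alternatively, you can drop the Leray argument altogether. The restriction sequence you use in (vi) also proves (v) by induction on $a$. Indeed, $L(-C_e)\cdot C_e=L\cdot C_e+e\ge -1$, $H^1(C_e,\mathcal{O}_{\mathbb{P}^1}(b-ae))=0$, and the base case $a=-1$ is (iv). That version uses only Riemann--Roch, Serre duality and the two restriction sequences.
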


\section{Segre invariant of rank $2$ vector bundles on Hirzebruch surfaces}

In this section, we study the Segre invariant for rank $2$ vector bundles  on the Hirzebruch surface $\mathbb{F}_e$, $e \geq 0$.  If $E$ is a rank $2$ vector bundle on $\mathbb{F}_e$ with first Chern class $c_1(E)=aC_e+bF $, there is a uniquely determined line bundle $\mathcal{O}_{\mathbb{F}_e}(a_1C_e+b_1F)$  such that 
\[c_1(E \otimes \mathcal{O}_{\mathbb{F}_e}(a_1C_e+b_1F) ) = -\alpha C_e -\beta F, \,\,\, \alpha,\beta \in \{0,1\}.\] 
Namely;
\[\mathcal{O}_{\mathbb{F}_e}(a_1C_e+b_1F) = \begin{cases}
     -\frac{c_1(E)}{2}, & \text{if $a $ and $b$ are even} \\
     -\frac{(a+1)C_e+bF}{2}, & \text{if $a$ is odd and $b$ is even} \\
     -\frac{aC_e+(b+1)F}{2}, & \text{if $a$ is even and $b$ is odd} \\
     -\frac{(a+1)C_e+(b+1)F}{2}, & \text{if $a$ is odd and $b$ is odd} \\
\end{cases}\]
Since $S_H(E) = S_H(E \otimes L)$ for any line bundle $L$ on $\mathbb{F}_e$, in the remainder of this section we assume that $E$ has  first Chern class  $c_1 = -\alpha C_e -\beta F$, $\alpha, \beta \in \{0,1\}$  and second Chern class $c_2$.

Let $H= C_e+(e+1)F$ be an ample divisor on $\mathbb{F}_e$, $e \geq 0$. The aim of this section is to give necessary  conditions for the existence of an $H-$stable vector bundle $E$ of rank $2$ with fixed Chern class $c_1$, $c_2$, and $S_H(E)=s$. The following results will be the key point for proving the main results of this section.

\begin{Remark}
    Let $H= C_e+(e+1)F$ be an ample divisor on $\mathbb{F}_e$, $e \geq 0$. Let $E$ be vector bundle on $\mathbb{F}_e$ with chern classes $c_1(E)= aC_e+bF$, then 
    \[\mu_H(E)= \frac{c_1(E)\cdot H}{rk (E)}=\frac{a+b}{rk (E)}.\]
\end{Remark}

Serre's construction  provides a method for constructing rank two vector bundles on a surface $X$ (see \cite[Chapter 5]{Huybrechts-Lehn} for more details):

\begin{Lemma}  \label{CB} 
Let $Z \subset X$ be a local complete intersection of codimension two in the projective non-singular surface $X$, and let $L$ and $M$ be line bundles on $X$. Then there exists an extension
\[0 \longrightarrow L \longrightarrow E \longrightarrow M \otimes I_Z \longrightarrow 0\]
such that $E$ is locally free if and only if the pair $(L^{\vee} \otimes M \otimes K_X,Z)$ satisfy the Cayley-Bacharach property:
\begin{align*}
 (CB) \,\,\,  & \text{if $Z' \subset Z$ is a sub-scheme with
 $l(Z')= l(Z)-1$ and }\\
 & \text{$s \in H^0(X, L^{\vee} \otimes M \otimes K_X)$  with  $s\vert _{Z'}=0$, then $s\vert_Z=0$}.
\end{align*}
\end{Lemma}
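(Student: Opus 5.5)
This is the classical Serre correspondence, as in \cite[Theorem 5.1.1]{Huybrechts-Lehn}. I will work with the space of extensions $\mathrm{Ext}^1(M\otimes I_Z, L)$ and characterize when the middle term is locally free.

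\emph{Step 1: translate the extension group.} Serre duality on $X$ gives
\[
\mathrm{Ext}^1(M\otimes I_Z,L)\cong H^1(X, M\otimes L^{\vee}\otimes K_X\otimes I_Z)^{\vee}.
\]
Put $N:=L^{\vee}\otimes M\otimes K_X$. Twisting $0\to I_Z\to\mathcal{O}_X\to\mathcal{O}_Z\to 0$ by $N$ gives the long exact sequence
\[
0\to H^0(N\otimes I_Z)\to H^0(N)\to H^0(N|_Z)\to H^1(N\otimes I_Z)\to H^1(N)\to\cdots .
\]

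\emph{Step 2: local criterion for local freeness.} Apply the local-to-global spectral sequence to $\mathcal{E}xt(M\otimes I_Z,L)$. Since $Z$ is a local complete intersection, $\mathcal{E}xt^1(I_Z,\mathcal{O}_X)\cong \mathcal{E}xt^2(\mathcal{O}_Z,\mathcal{O}_X)\cong\omega_Z\otimes K_X^{\vee}$, which is locally isomorphic to $\mathcal{O}_Z$. This gives a map
\[
\mathrm{Ext}^1(M\otimes I_Z,L)\to H^0(\mathcal{E}xt^1(M\otimes I_Z,L))\cong H^0(Z,\omega_Z\otimes N^{\vee}|_Z).
\]
Locally at a point $x\in Z$ we have $I_{Z,x}=(f,g)$. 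A local extension of $I_{Z,x}$ by $\mathcal{O}_x$ has locally free middle term exactly when its class generates $\mathcal{E}xt^1_x\cong\mathcal{O}_{Z,x}$: the Koszul resolution shows the generator produces $\mathcal{O}_x^2$, while a non-generator leaves torsion or non-free depth. So $E$ is locally free if and only if the image of the class $\xi$ generates the sheaf $\omega_Z\otimes N^{\vee}|_Z$ at every point of $Z$.

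\emph{Step 3: the obstruction map.} The next term of the spectral sequence is $H^2(\mathcal{H}om(M\otimes I_Z,L))=H^2(L\otimes M^{\vee})$, dual to $H^0(N)$. Using duality on $Z$, the map $H^0(\omega_Z\otimes N^{\vee}|_Z)\to H^2(L\otimes M^{\vee})$ is the dual of the restriction map $H^0(N)\to H^0(N|_Z)$. Hence a section $\sigma$ of $\omega_Z\otimes N^{\vee}|_Z$ lifts to an extension exactly when $\sigma$ annihilates, under the trace pairing, every $s|_Z$ with $s\in H^0(N)$.

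\emph{Step 4: bring in Cayley–Bacharach. This is the main obstacle.} I need to show that a generating $\sigma$ pairing to zero with the image of $H^0(N)\to H^0(N|_Z)$ exists if and only if (CB) holds. Since $\omega_Z$ is locally free of rank one on the Gorenstein scheme $Z$, a generator $\sigma$ pairs nondegenerately. Each $Z'\subset Z$ of colength one corresponds to a socle element, and the elements that are not generators are exactly those annihilating some such colength-one socle.

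For one direction, suppose (CB) fails. Then there is an $s$ with $s|_{Z'}=0$ but $s|_Z\neq 0$, so $s|_Z$ lies in the socle part attached to $Z'$. The generator $\sigma$ pairs nontrivially with that socle element, so $\sigma$ cannot kill $s|_Z$, and no locally free extension exists.

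For the converse, suppose (CB) holds. Then the image $V$ of $H^0(N)$ in $H^0(N|_Z)$ contains no nonzero element supported on a colength-one piece. The annihilator $V^{\perp}$ in $H^0(\omega_Z\otimes N^{\vee}|_Z)$ is a linear subspace. The non-generators form a finite union of proper subspaces, one for each point and each socle direction: since $Z$ is lci, hence Gorenstein, each local socle is one-dimensional. So it suffices to show $V^{\perp}$ is not contained in any of these subspaces. Dualizing, $V^{\perp}$ lies inside the $x$-th subspace exactly when the socle element at $x$ lies in $V$, and (CB) rules this out. Since a vector space over $\mathbb{C}$ is not a finite union of proper subspaces, a generating $\sigma\in V^{\perp}$ exists. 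Lifting $\sigma$ to $\xi$ gives the locally free $E$.

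The delicate points I would verify carefully are the identification of the spectral-sequence differential with the dual of restriction, and the local socle argument.
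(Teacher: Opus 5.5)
Your proof is correct and follows the standard argument of Huybrechts--Lehn, Theorem 5.1.1 (local-to-global spectral sequence, Serre's local criterion for local freeness, and duality on the Gorenstein scheme $Z$, whose local socles are one-dimensional); the paper gives no proof of its own and simply cites that source. The two points you flag do go through. The differential $H^0(\omega_Z\otimes N^{\vee}|_Z)\to H^2(L\otimes M^{\vee})$ is most easily identified with the dual of restriction through the $\mathrm{Ext}(-,L)$ sequence of $0\to M\otimes I_Z\to M\to M\otimes\mathcal{O}_Z\to 0$. In the local criterion, the middle term is never torsion, since it is an extension of torsion-free sheaves; the real obstruction is that a non-generating class yields a middle term with $3$-dimensional fibre at $x$, so it cannot be free.
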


\begin{Remark}
\begin{itemize}
    \item [(i)] If $H^0(X,L^{\vee} \otimes M \otimes K_X)=0$, then $(CB)$ is satisfied for any $Z$;
    \item [(ii)] If $H^0(X,L^{\vee} \otimes M \otimes K_X)=l$, for a generic $Z$ with $l(Z) > l$, the sheaf $L^{\vee} \otimes M \otimes K_X \otimes I_Z$ has no non-trivial sections, and hence $(L^{\vee} \otimes M \otimes K_X,Z)$
    satisfies (CB).
\end{itemize}
\end{Remark}

\begin{Lemma}\label{HO}
    Let $Z \subset \mathbb{F}_e$, $e \geq 0$  be a local complete intersection of codimension two,  and let $a,b \in \mathbb{N}$,  $\alpha, \beta \in \{0,1\}$. Let $L = \mathcal{O}_{\mathbb{F}_e}(-a C_e - b F)$ be a line bundle on $\mathbb{F}_e$. Then, $H^0(L^\vee \otimes L^\vee \otimes \mathcal{O}_{\mathbb{F}_e}(-\alpha C_e - \beta F) \otimes K_{\mathbb{F}_e})=0$ whenever
    \[-\frac{\alpha+2}{2}<a \,\,\,\,\,\, \text{or} \,\,\,\,\,\, -\frac{\beta+e+2}{2} < b. \]
\end{Lemma}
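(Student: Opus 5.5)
The plan is to reduce the vanishing to the effectivity criterion of Remark \ref{efample}(i). A line bundle $\mathcal{O}_{\mathbb{F}_e}(xC_e+yF)$ has a nonzero global section if and only if the divisor $xC_e+yF$ is effective, that is, if and only if $x\geq 0$ and $y\geq 0$. So it suffices to identify the class of the twisted bundle in the statement and check that one of its two coordinates is negative under the stated hypothesis.

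\emph{Step 1: computing the class.} Use $K_{\mathbb{F}_e}=-2C_e-(e+2)F$. The inequalities in the statement are exactly the conditions for the $C_e$-coefficient or the $F$-coefficient of
\[
L\otimes L\otimes \mathcal{O}_{\mathbb{F}_e}(-\alpha C_e-\beta F)\otimes K_{\mathbb{F}_e}
=\mathcal{O}_{\mathbb{F}_e}\bigl(-(2a+\alpha+2)C_e-(2b+\beta+e+2)F\bigr)
\]
to be negative, since $L=\mathcal{O}_{\mathbb{F}_e}(-aC_e-bF)$. Indeed, $-(2a+\alpha+2)<0$ is equivalent to $-\frac{\alpha+2}{2}<a$, and $-(2b+\beta+e+2)<0$ is equivalent to $-\frac{\beta+e+2}{2}<b$. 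I will therefore read the bundle in the statement with this sign convention for the twisting line bundle (the one used in the Serre construction of Lemma \ref{CB}), because it is the reading that matches the stated inequalities.

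\emph{Step 2: case split.} If $-\frac{\alpha+2}{2}<a$, then the $C_e$-coefficient $-(2a+\alpha+2)$ is negative. If $-\frac{\beta+e+2}{2}<b$, then the $F$-coefficient $-(2b+\beta+e+2)$ is negative. In either case the divisor is not effective by Remark \ref{efample}(i), so $H^0=0$. As a cross-check in the first case, the intersection number with $F$ is $-(2a+\alpha+2)\leq -1$, so Theorem \ref{CH}(iii) gives $h^0=0$ directly. Since $a,b\in\mathbb{N}$ and $\alpha,\beta,e\geq 0$, both hypotheses in fact always hold, so the vanishing holds unconditionally under this reading.

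\emph{Main obstacle.} The only delicate point is the bookkeeping of duals and signs in identifying which line bundle is meant. With the literal reading $L^\vee=\mathcal{O}_{\mathbb{F}_e}(aC_e+bF)$, the class would be $(2a-\alpha-2)C_e+(2b-\beta-e-2)F$. Its vanishing criterion is the opposite-sign condition $a<\frac{\alpha+2}{2}$ or $b<\frac{\beta+e+2}{2}$, which is not the one stated. So I will fix the convention as in Step 1, consistent with the stated inequalities, and then carry out the verification above.
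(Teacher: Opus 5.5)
\textbf{The lemma you prove is not the meaningful one.} The paper gives no proof of this lemma, and your tool is the right one: compute the class, then apply Remark \ref{efample}(i) or Theorem \ref{CH}(iii). The problem is the statement you apply it to.

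Taken literally, the lemma is false, not just mis-signed. For $a,b\in\mathbb{N}$ both hypotheses always hold. Yet with $\alpha=\beta=0$, $a=1$, $b=e+1$ one gets $L^\vee\otimes L^\vee\otimes K_{\mathbb{F}_e}=\mathcal{O}_{\mathbb{F}_e}(eF)$, which has $e+1$ sections. You should say this outright with such a counterexample.

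Instead you replace $L^\vee$ by $L$ while keeping $a,b\in\mathbb{N}$. This yields a lemma whose hypotheses are vacuous, as you observe yourself, which signals that it is not the intended content. Your justification for this reading is also backwards. In Lemma \ref{CB}, with sub-line bundle $L$ and $c_1(E)=-\alpha C_e-\beta F$, one has $M=L^\vee(-\alpha C_e-\beta F)$. So the Cayley--Bacharach bundle $L^\vee\otimes M\otimes K_{\mathbb{F}_e}$ is exactly the literal one, $\mathcal{O}_{\mathbb{F}_e}((2a-\alpha-2)C_e+(2b-\beta-e-2)F)$. For $\alpha=0$ this is the bundle in the (CB) hypothesis of Theorem \ref{TP3}. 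Your bundle is instead the Cayley--Bacharach bundle for the sub-line bundle $L^\vee$.

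\textbf{The correct fix.} Allow arbitrary integer coefficients: take $L=\mathcal{O}_{\mathbb{F}_e}(aC_e+bF)$ with $a,b\in\mathbb{Z}$. Equivalently, keep $L=\mathcal{O}_{\mathbb{F}_e}(-aC_e-bF)$ with $a,b\in\mathbb{Z}$ and reverse the inequalities to $a<\frac{\alpha+2}{2}$ or $b<\frac{\beta+e+2}{2}$. Then $L^\vee\otimes L^\vee(-\alpha C_e-\beta F)\otimes K_{\mathbb{F}_e}=\mathcal{O}_{\mathbb{F}_e}(-(2a+\alpha+2)C_e-(2b+\beta+e+2)F)$. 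Your Steps 1 and 2 go through verbatim, now with real content when $a$ or $b$ is negative.

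This is also how the paper uses the lemma. The sub-line bundles $\mathcal{O}_{\mathbb{F}_e}(-aC_e+bF)$ and $\mathcal{O}_{\mathbb{F}_e}(aC_e-bF)$ of Theorems \ref{TP1} and \ref{TP2} each have a nonnegative coefficient, so one inequality holds and (CB) is automatic. For $\mathcal{O}_{\mathbb{F}_e}(-aC_e-bF)$ with $a,b\geq 1$, neither inequality need hold, which is why Theorem \ref{TP3} assumes (CB) separately.
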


\begin{Remark} \emph{
Let $H$  be an ample divisor on $\mathbb{F}_e$, $e \geq 0$, and let $E$ be a rank $2$ vector bundle on $\mathbb{F}_e$ such that $c_1(E) \cdot H \leq 0$. If $E$ is $H-$stable, then $H^0(\mathbb{F}_e,E)=0$.}
\end{Remark} 

\begin{Example}
Let $H = C_e+(e+1)F$  be an ample divisor on $\mathbb{F}_e$, $e \geq 0$.   Let $a, b \in \mathbb{N}$, $a\leq b$, $\alpha, \beta \in \{0,1\}$ and let $Z \subset \mathbb{F}_e$ be a local complete intersection of codimension two. By Lemma  \ref{CB} and Lemma \ref{HO} exists an extension 
\[0 \rightarrow \mathcal{O}_{\mathbb{F}_e}(-aC_e+bF)\rightarrow E \rightarrow \mathcal{O}_{\mathbb{F}_e}(-(a+\alpha)C_e+(b-\beta)F)\otimes I_Z \rightarrow 0\]
where $E$ is locally free with Chern class $c_1(E)=-\alpha C_e-\beta F$. Here, by Remark \ref{efample} follows that
\[h^0(\mathbb{F}_e, E) \leq h^0(\mathbb{F}_e, \mathcal{O}_{\mathbb{F}_e}(-aC_e+bF))+h^0(\mathbb{F}_e, \mathcal{O}_{\mathbb{F}_e}(-(a+\alpha)C_e+(b-\beta)F)\otimes I_Z)=0\]
Note that
\[\mu_{H}(E) = \frac{-(\alpha+\beta)}{2} \leq 0 \leq  c_1(\mathcal{O}_{\mathbb{F}_e}(-aC_e+bF)) \cdot H = -a+b.\]
Consequently $E$ is not $H-$stable.
\end{Example}

\begin{Lemma} \label{nosecciones}
Let $H: =C_e+(e+1)F$  be an ample divisor on $\mathbb{F}_e$, $e \geq 0$, and let $E$ be a rank $2$ vector bundle on $\mathbb{F}_e$ with Chern classes $c_1= -\alpha C_e- \beta F$,  $\alpha, \beta \in \{0,1\}$ and $c_2$. Let $a, b \in \mathbb{N}$.
\begin{enumerate}
    \item  Assume that $E$ can be written in the exact sequence  
\[0 \rightarrow \mathcal{O}_{\mathbb{F}_e}(-aC_e) \rightarrow E \rightarrow \mathcal{O}_{\mathbb{F}_e}((a-\alpha)C_e) \otimes I_Z \rightarrow 0,\]
with $a\neq 0$ and  $Z \subset \mathbb{F}_e$  is a local complete intersection of codimension two of length $l(Z)=c_2-a^2e+ae\alpha$.  If any of the following conditions is satisfied:
\begin{itemize}
    \item [(i)] $ae \leq e\alpha +1$ and $(a-\alpha+1)(2-e(a-\alpha))+ 2a^2e-2ae\alpha \leq 2c_2$.
    
    \item [(ii)] $e\alpha +1 < ae $ and $(a-m-\alpha+1)(2-e(a-m-\alpha))+ 2a^2e-2ae\alpha \leq 2c_2$, if there exists some smallest integer $0 < m < a+1-\alpha$ such that $ae \leq e(m+\alpha)+1$.
    
\end{itemize}
then $H^0(\mathbb{F}_e,E) = 0$.

\item Assume that $E$ can be written in the exact sequence  
\[0 \rightarrow \mathcal{O}_{\mathbb{F}_e}(-bF) \rightarrow E \rightarrow \mathcal{O}_{\mathbb{F}_e}((b-\beta)F)\otimes I_Z \rightarrow 0.\]
with $b\neq 0$ and  $Z \subset \mathbb{F}_e$  is a local complete intersection of codimension two of length $l(Z)=c_2$. If $b-\beta+1 \leq c_2$, then $H^0(\mathbb{F}_e,E) = 0$.

\item Assume that $E$ can be written in the exact sequence \[0 \rightarrow \mathcal{O}_{\mathbb{F}_e}(-aC_e-bF) \rightarrow E \rightarrow \mathcal{O}_{\mathbb{F}_e}((a-\alpha)C_e+(b-\beta)F) \otimes I_Z\]
with $a, b \neq 0$ and  $Z \subset \mathbb{F}_e$  is a local complete intersection of codimension two of length $l(Z)=c_2-a^2e+2ab+ae\alpha-a\beta-b\alpha$. If any of the following conditions is satified
\begin{itemize}
    \item [(i)] $ae+\beta \leq e\alpha+b+1$ and $(a+1-\alpha)(2(b+1-\beta)-e(a-\alpha))+2(a^2e-2ab-ae\alpha+a\beta+b\alpha) \leq 2c_2$  

\item [(ii)]  $ e\alpha+b+1 < ae+\beta$ and $(a+1-m-\alpha)(2(b+1-\beta)-e(a-m-\alpha))+2(a^2e-2ab-ae\alpha+a\beta+b\alpha) \leq 2c_2$, if there exists some smallest integer $0 < m < a+1-\alpha$ such that $ea+\beta \leq e(m+\alpha)+b+1$. 

\end{itemize}
then $H^0(\mathbb{F}_e,E) = 0$.
\end{enumerate}
\end{Lemma}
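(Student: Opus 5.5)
The plan is to read off $H^0(\mathbb{F}_e,E)$ from the long exact cohomology sequence of the defining extension, and to reduce the vanishing to a count of sections of the quotient line bundle against the length of $Z$. In each of the three cases the extension has the form
\[0 \rightarrow L \rightarrow E \rightarrow M\otimes I_Z \rightarrow 0,\]
with $L=\mathcal{O}_{\mathbb{F}_e}(-aC_e)$, $\mathcal{O}_{\mathbb{F}_e}(-bF)$ or $\mathcal{O}_{\mathbb{F}_e}(-aC_e-bF)$. Since $a\neq 0$ (resp. $b\neq 0$), the class of $L$ has a negative coefficient, so by Remark \ref{efample}(i) $L$ is not effective and $H^0(\mathbb{F}_e,L)=0$. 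The cohomology sequence then gives an injection
\[H^0(\mathbb{F}_e,E)\hookrightarrow H^0(\mathbb{F}_e,M\otimes I_Z)\subset H^0(\mathbb{F}_e,M).\]
It therefore suffices to show $H^0(M\otimes I_Z)=0$. I will argue this by showing $h^0(\mathbb{F}_e,M)\le l(Z)$ and then letting the points of $Z$ impose independent conditions on $H^0(M)$. This requires $Z$ to be in general position, as in part (ii) of the remark after Lemma \ref{CB}, and I would make that genericity assumption explicit.

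The core of the proof is computing $h^0(M)$ via Theorem \ref{CH}, case by case.

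\emph{Case (2).} Here $M=\mathcal{O}((b-\beta)F)$, and $M\cdot C_e=b-\beta\ge -1$. So (v) gives
\[h^0(M)=\chi(M)=b-\beta+1,\]
and the hypothesis $b-\beta+1\le c_2=l(Z)$ is exactly what is needed.

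\emph{Case (1).} Here $M=\mathcal{O}((a-\alpha)C_e)$ and $M\cdot C_e=-e(a-\alpha)$.
\begin{itemize}
\item[(i)] The condition $ae\le e\alpha+1$ means $M\cdot C_e\ge -1$, so by (v)
\[h^0(M)=\chi(M)=\tfrac{1}{2}(a-\alpha+1)\bigl(2-e(a-\alpha)\bigr).\]
Hence the stated inequality is precisely $h^0(M)\le l(Z)=c_2-a^2e+ae\alpha$.
\item[(ii)] If $M\cdot C_e<-1$, I apply (vi) repeatedly: $H^0(M)\cong H^0(M(-C_e))\cong\cdots\cong H^0(M(-mC_e))$. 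Here $m$ is the smallest integer with $(M-mC_e)\cdot C_e\ge -1$, i.e. $ae\le e(m+\alpha)+1$. Then (v) gives $h^0(M)=\chi(\mathcal{O}((a-m-\alpha)C_e))$, which turns the hypothesis into $h^0(M)\le l(Z)$ again. The constraint $m<a+1-\alpha$ keeps $M\cdot F\ge 0$ throughout, so (iii) is never triggered.
\end{itemize}

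\emph{Case (3).} This is identical with $M=\mathcal{O}((a-\alpha)C_e+(b-\beta)F)$. Now $M\cdot C_e=b-\beta-e(a-\alpha)$, and the twist by $-mC_e$ has $\chi=(a-m-\alpha+1)\bigl((b-\beta+1)-\tfrac{e}{2}(a-m-\alpha)\bigr)$. This matches the displayed inequalities once $l(Z)=c_2-a^2e+2ab+ae\alpha-a\beta-b\alpha$ is moved across.

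The main obstacle is the last step: passing from $h^0(M)\le l(Z)$ to $h^0(M\otimes I_Z)=0$. This is false for special $Z$; for instance, $Z$ could lie on a curve in $|M|$. I would handle it by a dimension count. Inside the Hilbert scheme of $l(Z)$ points, the locus of $Z$ contained in some member of $|M|$ has dimension at most $\dim|M|+l(Z)$, which is less than $2l(Z)$ as soon as $h^0(M)\le l(Z)$. Hence for general $Z$ of that length we get $H^0(M\otimes I_Z)=0$. Equivalently, one can add points one at a time, each chosen off the base locus of the current linear system, so that each point drops $h^0$ by one. I would state the lemma for such general $Z$; this is the form in which it is later applied in Theorem \ref{TP3}.
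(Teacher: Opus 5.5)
Your argument is the one the paper's one-line proof (``follows directly from Theorem \ref{CH}'') has in mind: $H^0(L)=0$, and Theorem \ref{CH}(v)/(vi) turns each numerical hypothesis into $h^0(M)\le l(Z)$, which gives $H^0(M\otimes I_Z)=0$ and hence $H^0(E)=0$. The genericity assumption on $Z$ that you add is genuinely needed, and the paper's statement omits it: take $\mathbb{F}_0$ with $\alpha=0$, $a=1$, $c_2=2$ and $Z$ two points on one curve of $|C_0|$. Then condition (1)(i) holds, and $E$ is locally free because $L^\vee\otimes M\otimes K_{\mathbb{F}_0}=\mathcal{O}_{\mathbb{F}_0}(-2F)$ has no sections, yet $H^1(\mathcal{O}_{\mathbb{F}_0}(-C_0))=0$ forces $H^0(E)\cong H^0(\mathcal{O}_{\mathbb{F}_0}(C_0)\otimes I_Z)\neq 0$.
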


\begin{proof}
 The proof follows directly of Theorem \ref{CH}.
\end{proof}

We can now state the principal results of this section. Let $H:= C_e +(e+1)F$  be an ample divisor on $\mathbb{F}_e$, $e \geq 0$.  We give neccesary conditons for the existence of an $H-$stable vector bundle of rank $2$, with Chern classes $c_1$, $c_2$ where $c_1= -\alpha C_e-\beta F$, $\alpha, \beta \in \{0,1\}$, and a fixed value $S_H(E):=s > 0$.  Note that $s$ can be written in one of the following ways (see Remark \ref{NotUnique}.):

\begin{itemize}
    \item [(i)] $s = 2(a-b)-\alpha-\beta$.
    
    \item [(ii)] $s= 2(b-a)-\alpha-\beta$.
    
    \item [(iii)] $s = 2(a+b)-\alpha-\beta$.
\end{itemize}
for some $a,b \geq 0$.

\begin{Theorem}\label{TP1}
Let $H=C_e+(e+1)F$   be an ample divisor on $\mathbb{F}_e$, $e \geq 0$ and let \\
$\alpha, \beta \in \{0,1\}$, $c_2 \geq 2$, $a,b \in \mathbb{N}$, $a  \neq 0$ and set $s:=2(a-b)-\alpha-\beta >0$.  Let $Z \subset \mathbb{F}_e$ be a local complete intersection of codimension two of lenght $l(Z)=c_2-a^2e+ae\alpha-2ab -a\beta+b\alpha > 0$.  If \begin{equation}\label{TP1a1}
    \begin{aligned}  
        h^0(\mathbb{F}_e,  \mathcal{O}_{\mathbb{F}_e}((a-\alpha)C_e+ & (2(a-b)-\alpha-\beta-1)F \otimes I_Z)=  \\ & h^0(\mathbb{F}_e, \mathcal{O}_{\mathbb{F}_e}((2a-\alpha-1)C_e+(a-2b-\beta-1)F \otimes I_Z)=0, 
    \end{aligned}
\end{equation} 
then there exists an $H-$stable vector bundle $E$  with Chern classes $c_1(E) = -\alpha C_e-\beta F$, $c_2$ and $S_H(E)=s$. 
Furthermore, $E$ can be written in the following exact sequence:
\[0 \rightarrow \mathcal{O}_{\mathbb{F}_e}(-aC_e+bF) \rightarrow E \rightarrow \mathcal{O}_{\mathbb{F}_e}((a-\alpha)C_e-(b+\beta)F) \otimes I_Z \rightarrow 0\]
with  $\mathcal{O}_{\mathbb{F}_e}(-aC_e+bF) \subset E$ maximal.
\end{Theorem}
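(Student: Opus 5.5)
Set $L=\mathcal{O}_{\mathbb{F}_e}(-aC_e+bF)$ and $M=\mathcal{O}_{\mathbb{F}_e}((a-\alpha)C_e-(b+\beta)F)$. The plan is to build $E$ by Serre's construction (Lemma \ref{CB}), then check stability and maximality of $L$ by showing that no line bundle of larger $H$-degree maps nontrivially into $E$.

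\textbf{Step 1: existence of a locally free extension.} Compute
\[
L^\vee\otimes M\otimes K_{\mathbb{F}_e}=\mathcal{O}_{\mathbb{F}_e}\bigl((2a-\alpha-2)C_e-(2b+\beta+e+2)F\bigr).
\]
Its coefficient of $F$ is negative, so by Remark \ref{efample}(i) it is not effective and has no sections. By the first item of the Remark after Lemma \ref{CB}, the Cayley–Bacharach property holds for every $Z$. Hence there is a locally free $E$ sitting in the stated sequence. Then $c_1(E)=c_1(L)+c_1(M)=-\alpha C_e-\beta F$, and
\[
c_2(E)=c_1(L)\cdot c_1(M)+l(Z).
\]
Expanding $(-aC_e+bF)\cdot((a-\alpha)C_e-(b+\beta)F)=a^2e-ae\alpha+2ab+a\beta-b\alpha$ shows that the given $l(Z)$ produces exactly $c_2$.

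\textbf{Step 2: the target value of $S_H$.} Using $\mu_H$ and $c_1(L)\cdot H=b-a$, we get $c_1(E)\cdot H-2c_1(L)\cdot H=-\alpha-\beta+2(a-b)=s>0$. So $S_H(E)\le s$. It remains to show that every line bundle $N\subset E$ satisfies $c_1(E)\cdot H-2c_1(N)\cdot H\ge s$, i.e. $c_1(N)\cdot H\le b-a$. That inequality gives both $S_H(E)=s$ and $H$-stability, since $s>0$, and it makes $L$ maximal.

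\textbf{Step 3: excluding destabilizing subsheaves (main obstacle).} Take $N=\mathcal{O}_{\mathbb{F}_e}(xC_e+yF)\subset E$ with $x+y>b-a$. Either $N\to M\otimes I_Z$ is nonzero, or $N$ factors through $L$.

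If it factors through $L$, then $L\otimes N^\vee$ is effective. This gives $x\le -a$ and $y\le b$, so $x+y\le b-a$, a contradiction.

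Otherwise $H^0(M\otimes N^\vee\otimes I_Z)\ne0$, and $M\otimes N^\vee=\mathcal{O}_{\mathbb{F}_e}((a-\alpha-x)C_e+(-b-\beta-y)F)$ must be effective. Writing $x+y\ge b-a+1$ and reducing to the extremal cases, I would show that any such twist contains, after subtracting effective divisors (multiplication by sections), one of the two bundles in \eqref{TP1a1}:
\[
\mathcal{O}_{\mathbb{F}_e}\bigl((a-\alpha)C_e+(2(a-b)-\alpha-\beta-1)F\bigr)\otimes I_Z
\quad\text{or}\quad
\mathcal{O}_{\mathbb{F}_e}\bigl((2a-\alpha-1)C_e+(a-2b-\beta-1)F\bigr)\otimes I_Z .
\]
This case split mirrors the two extreme choices of $N$: one with $x$ large and one with $y$ large. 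If $N$ itself is not the worst case, then $M\otimes N^\vee$ is dominated by one of these two, and a section of the smaller bundle would yield a section of the dominating one vanishing on $Z$, contradicting \eqref{TP1a1}.

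I expect the bookkeeping here to be the hardest part. One must verify that, for all integer pairs $(x,y)$ with $x+y\ge b-a+1$ and $M\otimes N^\vee$ effective, the class $(a-\alpha-x)C_e+(-b-\beta-y)F$ is $\le$ one of the two given classes coefficientwise, after using $x\le a-\alpha$ and $y\le -b-\beta$. I would try first to carry out the extremal-case reduction by treating $x\ge0$ and $x<0$ separately, and fall back on Theorem \ref{CH} to compare sections where the coefficientwise comparison fails.
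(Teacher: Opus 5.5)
Your proposal is correct and follows the paper's proof: Serre's construction with Cayley--Bacharach holding vacuously because $(2a-\alpha-2)C_e-(2b+\beta+e+2)F$ is not effective, then ruling out any $\mathcal{O}_{\mathbb{F}_e}(xC_e+yF)\subset E$ with $x+y>b-a$ by using the extension and coefficientwise domination by the two classes in \eqref{TP1a1}. Your split $x\ge 0$ versus $x<0$ matches the paper's case (ii) versus cases (i) and (iii), and your uniform dichotomy (the map either factors through $L$ or is nonzero to the quotient) makes the paper's preliminary step $H^0(\mathbb{F}_e,E)=0$ unnecessary; the bookkeeping you defer does close, since with $p=a-\alpha-x\ge 0$ and $q=-b-\beta-y\ge 0$ the degree condition is $p+q\le s-1$, so $x\ge 0$ gives $(p,q)\le(a-\alpha,s-1)$, while $x<0$ gives $p\le s-1\le 2a-\alpha-1$ and $q\le s-1-(a-\alpha+1)=a-2b-\beta-2$, and no appeal to Theorem \ref{CH} is needed.
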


\begin{Theorem} \label{TP2}
Let $H=C_e+(e+1)F$   be an ample divisor on $\mathbb{F}_e$, $e \geq 0$ and let $\alpha, \beta \in \{0,1\}$, $c_2 \geq 2$, $a,b \in \mathbb{N}$, $b  \neq 0$ and set $s:=2(b-a)-\alpha-\beta >0$.  Let $Z \subset \mathbb{F}_e$ be a local complete intersection of codimension two of lenght $l(Z)=c_2-a^2e-ae\alpha-2ab +a\beta-b\alpha > 0$.  If   
    \begin{equation} \label{TP2a2} 
    \begin{aligned}
        h^0(\mathbb{F}_e,  \mathcal{O}_{\mathbb{F}_e}& (2(b-a)-\alpha-\beta-1)C_e+  (b-\beta)F \otimes I_Z)=  \\ & h^0(\mathbb{F}_e, \mathcal{O}_{\mathbb{F}_e}((b-2a-\alpha-1)C_e+(2b-\beta-1)F \otimes I_Z)=0, 
    \end{aligned}
    \end{equation}
then there exists an $H-$stable vector bundle $E$  with Chern classes $c_1(E) = -\alpha C_e-\beta F$, $c_2$ and $S_H(E)=s$. 
Furthermore, $E$ can be written in the following exact sequence:
\[0 \rightarrow \mathcal{O}_{\mathbb{F}_e}(aC_e-bF) \rightarrow E \rightarrow \mathcal{O}_{\mathbb{F}_e}(-(a+\alpha)C_e+(b-\beta)F) \otimes I_Z \rightarrow 0\]
with  $\mathcal{O}_{\mathbb{F}_e}(aC_e-bF) \subset E$ maximal. 
\end{Theorem}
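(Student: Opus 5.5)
Set $L:=\mathcal{O}_{\mathbb{F}_e}(aC_e-bF)$ and $M:=\mathcal{O}_{\mathbb{F}_e}(-(a+\alpha)C_e+(b-\beta)F)$. The plan mirrors the construction for Theorem \ref{TP1}, with the roles of $C_e$ and $F$ interchanged. There are three steps: build $E$ as a locally free extension of $M\otimes I_Z$ by $L$ using Lemma \ref{CB}; check that the Chern classes and the value $c_1(L)\cdot H$ give Segre number $s$; and show that no line subbundle has larger $H$-degree than $L$. The last step yields both $H$-stability (since $s>0$) and maximality of $L$.

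\textbf{Construction.} We have $L^\vee\otimes M\otimes K_{\mathbb{F}_e}=\mathcal{O}_{\mathbb{F}_e}(-(2a+\alpha+2)C_e+(2b-\beta-e-2)F)$. Its $C_e$-coefficient is negative, so it is not effective by Remark \ref{efample}, and it has no sections. By Remark (i) after Lemma \ref{CB}, the Cayley--Bacharach property therefore holds for every $Z$. Lemma \ref{CB} then gives a locally free extension $0\to L\to E\to M\otimes I_Z\to 0$; we take a nonsplit one. Then $c_1(E)=c_1(L)+c_1(M)=-\alpha C_e-\beta F$, and $c_2(E)=c_1(L)\cdot c_1(M)+l(Z)$. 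Computing $(aC_e-bF)\cdot(-(a+\alpha)C_e+(b-\beta)F)$ with $C_e^2=-e$, $C_e\cdot F=1$, $F^2=0$ and comparing with the given $l(Z)$, I expect to recover exactly $c_2$. Since $c_1(L)\cdot H=a-b$ and $c_1(E)\cdot H=-\alpha-\beta$, we get $c_1(E)\cdot H-2c_1(L)\cdot H=2(b-a)-\alpha-\beta=s$. Hence $S_H(E)\le s$.

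\textbf{Maximality (main obstacle).} Let $N=\mathcal{O}(xC_e+yF)\subset E$ be a line subbundle with $x+y> a-b$. We must derive a contradiction. If $N\to M\otimes I_Z$ is zero, then $N\subset L$, so $L\otimes N^\vee$ is effective and $x+y\le a-b$. Otherwise $M\otimes N^\vee\otimes I_Z$ has a nonzero section. Writing $M\otimes N^\vee=\mathcal{O}((-a-\alpha-x)C_e+(b-\beta-y)F)$, effectivity requires $x\le -a-\alpha$ and $y\le b-\beta$. Combining these with $x+y\ge a-b+1$ confines $(x,y)$ to a finite range. The hard part is to show that for every such $(x,y)$, the bundle $M\otimes N^\vee\otimes I_Z$ embeds into one of the two bundles in \eqref{TP2a2}; those have no sections, which gives the contradiction. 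I would split into the two extreme regimes. In the first, $y$ is as large as possible, $y=b-\beta$. Then $x\ge a-b+1-(b-\beta)$, so the $C_e$-coefficient of $M\otimes N^\vee$ is at most $2(b-a)-\alpha-\beta-1$, while the $F$-coefficient is $0\le b-\beta$. Twisting by an effective divisor then embeds the sections into $H^0(\mathcal{O}((2(b-a)-\alpha-\beta-1)C_e+(b-\beta)F)\otimes I_Z)=0$. The remaining region, where $y$ is smaller, is where the second vanishing $h^0(\mathcal{O}((b-2a-\alpha-1)C_e+(2b-\beta-1)F)\otimes I_Z)=0$ should dominate. I would check that the $F$-coefficient satisfies $b-\beta-y\le 2b-\beta-1$ and the $C_e$-coefficient satisfies $-a-\alpha-x\le b-2a-\alpha-1$ in that range, using the inequality $x+y\ge a-b+1$. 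The real work is in verifying that these two rectangles cover the allowed lattice region, possibly with an adjustment via Theorem \ref{CH}(vi) to drop $C_e$ summands when the $C_e$-degree is very negative.

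\textbf{Conclusion.} Once every $N$ satisfies $c_1(N)\cdot H\le a-b$, the maximum over line subbundles is attained by $L$. Hence $S_H(E)=s>0$, $E$ is $H$-stable, and $L\subset E$ is maximal, as claimed.
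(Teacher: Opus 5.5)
Your overall strategy is the paper's: build $E$ by Serre's construction, then show $\mathrm{Hom}(N,M\otimes I_Z)=0$ for every line bundle $N$ with $c_1(N)\cdot H>a-b$, using the two vanishing hypotheses. The paper does this in three sign cases for $N$, after first proving $H^0(E)=0$. Your numerics are correct:
\begin{itemize}
\item Cayley--Bacharach holds trivially, because $L^\vee\otimes M\otimes K_{\mathbb{F}_e}$ has $C_e$-coefficient $-(2a+\alpha+2)<0$;
\item $c_1(L)\cdot c_1(M)=a^2e+ae\alpha+2ab-a\beta+b\alpha$, which reproduces $l(Z)$;
\item $c_1(L)\cdot H=a-b$.
\end{itemize}

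The gap is the step you yourself call the main obstacle. You leave it undone, and the split you propose for it fails. Put $p=-a-\alpha-x$ and $q=b-\beta-y$, the coefficients of $M\otimes N^\vee$. Then the constraints are exactly $p,q\ge 0$ and $p+q\le s-1$. The inequality $x+y\ge a-b+1$ only gives $p\le b-2a-\alpha-1+y$. So your check $-a-\alpha-x\le b-2a-\alpha-1$ holds only for $y\le 0$, not on the whole region ``$y<b-\beta$''.

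Concretely, take $a=\alpha=\beta=0$ and $b=3$, so $s=6$. Then $N=\mathcal{O}_{\mathbb{F}_e}(-4C_e+2F)$ has $H$-degree $-2>-3$, and $M\otimes N^\vee=\mathcal{O}_{\mathbb{F}_e}(4C_e+F)$. This is not dominated by $2C_e+5F$, so the second hypothesis says nothing about it. It is the first hypothesis, $h^0(\mathcal{O}_{\mathbb{F}_e}(5C_e+3F)\otimes I_Z)=0$, that handles it.

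The correct split is by the sign of $y$, and it needs no appeal to Theorem \ref{CH}(vi).
\begin{itemize}
\item If $y\ge 0$, then $q\le b-\beta$ and $p\le s-1$. Multiplying by a section of the effective difference gives an injection
\[
\mathcal{O}_{\mathbb{F}_e}(pC_e+qF)\otimes I_Z\hookrightarrow \mathcal{O}_{\mathbb{F}_e}((s-1)C_e+(b-\beta)F)\otimes I_Z ,
\]
and the first hypothesis kills it. Your ``first regime'' $y=b-\beta$ is just a special case; the same argument covers all $y\ge 0$.
\item If $y\le -1$, then $q\ge b-\beta+1$. Hence $p\le s-1-q\le b-2a-\alpha-2$ and $q\le s-1\le 2b-\beta-1$, and the second hypothesis kills it.
\end{itemize}
Once this is written out, your $(p,q)$-triangle formulation is tidier than the paper's sign-case analysis. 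It also makes the separate verification of $H^0(E)=0$ unnecessary.
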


\begin{Theorem} \label{TP3}
 Let $H=C_e+(e+1)F$,   be an ample divisor on $\mathbb{F}_e$, $e \geq 0$ and let $\alpha, \beta \in \{0,1\}$, $c_2 \geq 2$, $a,b \in \mathbb{N}-\{0\}$ and set $s:=2(a+b)-\alpha-\beta >0$.  Let $Z \subset \mathbb{F}_e$ be a local complete intersection of codimension two of lenght $l(Z)=c_2-a^2e+ae\alpha+2ab -a\beta-b\alpha >0$ satisfying any of the conditions of item (3), Lemma \ref{nosecciones}.   If 
 the pair 
\begin{equation}
(\mathcal{O}_{\mathbb{F}_e}((2a-2)C_e + (2b-e-2-\beta)F), I_Z)
\end{equation}
satifies the property of Cayley-Bacharach (See Theorem \ref{CB}) and 
    \begin{equation} \label{TP3a3} 
    \begin{aligned}
        h^0(\mathbb{F}_e,  \mathcal{O}_{\mathbb{F}_e}&  (2(a+b)-\alpha-\beta-1)C_e+  (b-\beta)F \otimes I_Z)= \\ &  h^0(\mathbb{F}_e, \mathcal{O}_{\mathbb{F}_e}((a-\alpha)C_e+  (2(a+b)-\alpha-\beta-1)F \otimes I_Z)= \\ & h^0(\mathbb{F}_e, \mathcal{O}_{\mathbb{F}_e}((2a-\alpha-1)C_e+  (a+2b-\beta-1)F \otimes I_Z) = \\ &
        h^0(\mathbb{F}_e, \mathcal{O}_{\mathbb{F}_e}((2a+b-\alpha-1)C_e+  (2b-\beta-1)F \otimes I_Z)=0, 
    \end{aligned}
    \end{equation}
then there exists an $H-$stable vector bundle $E$  with Chern classes $c_1(E) = -\alpha C_e-\beta F$, $c_2$ and $S_H(E)=s$. 
Furthermore, $E$ can be written in the following exact sequence:
\[0 \rightarrow \mathcal{O}_{\mathbb{F}_e}(-aC_e-bF) \rightarrow E \rightarrow \mathcal{O}_{\mathbb{F}_e}((a-\alpha)C_e+(b-\beta)F) \otimes I_Z \rightarrow 0\]
with  $\mathcal{O}_{\mathbb{F}_e}(-aC_e-bF) \subset E$ maximal. 
\end{Theorem}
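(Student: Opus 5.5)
The plan has four steps: build the bundle by Serre's construction, check its Chern classes, show that $L:=\mathcal{O}_{\mathbb{F}_e}(-aC_e-bF)$ is a maximal subbundle, and read off stability and $S_H(E)$.

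\textbf{Construction.} Set $L:=\mathcal{O}_{\mathbb{F}_e}(-aC_e-bF)$ and $M:=\mathcal{O}_{\mathbb{F}_e}((a-\alpha)C_e+(b-\beta)F)$. Then
\[
L^{\vee}\otimes M\otimes K_{\mathbb{F}_e}=\mathcal{O}_{\mathbb{F}_e}((2a-\alpha-2)C_e+(2b-\beta-e-2)F),
\]
so the Cayley--Bacharach hypothesis of the statement is exactly what Lemma \ref{CB} needs; I would double-check it against the hypothesis as written, which omits $\alpha$. Lemma \ref{CB} then gives a locally free extension
\[
0\to L\to E\to M\otimes I_Z\to 0 .
\]
For the Chern classes, $c_1(E)=c_1(L)+c_1(M)=-\alpha C_e-\beta F$. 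Also $c_2(E)=c_1(L)\cdot c_1(M)+l(Z)$, and
\[
c_1(L)\cdot c_1(M)=-(aC_e+bF)\cdot((a-\alpha)C_e+(b-\beta)F)=a(a-\alpha)e-a(b-\beta)-b(a-\alpha).
\]
Adding the prescribed $l(Z)$ recovers $c_2$, since the two expressions cancel term by term.

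\textbf{Maximality and stability.} Note that $c_1(L)\cdot H=-(a+b)$ and $\mu_H(E)=-(\alpha+\beta)/2$. So once $L$ is maximal, $S_H(E)=-(\alpha+\beta)+2(a+b)=s>0$, and $E$ is $H$-stable because $S_H(E)>0$. It therefore suffices to show that every line subbundle $N=\mathcal{O}_{\mathbb{F}_e}(xC_e+yF)\subset E$ satisfies $x+y\le -(a+b)$. Take such an $N$ with $x+y>-(a+b)$; a nonzero map $N\to E$ is a section of $E(-xC_e-yF)$. If the composite $N\to M\otimes I_Z$ vanishes, then $N\subset L$, so $L\otimes N^{\vee}$ is effective and $x+y\le -(a+b)$, a contradiction. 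Hence $N$ maps nonzero to $M\otimes I_Z$. Moreover $H^0(E)=0$ by Lemma \ref{nosecciones}(3), and this is what the length hypothesis provides; it rules out $N$ trivial.

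\textbf{Main obstacle: the case analysis.} A nonzero map $N\to M\otimes I_Z$ forces $h^0(M\otimes N^{\vee}\otimes I_Z)\neq 0$. By Remark \ref{efample} we then have $x\le a-\alpha$ and $y\le b-\beta$. Combined with $x+y>-(a+b)$, this leaves a finite-type region of $(x,y)$. I would split it by the signs of $x$ and $y$, following the same trichotomy (i)--(iii) used to write $s$. The goal is to show that in each region $M\otimes N^{\vee}\otimes I_Z$ injects into one of the four twists of $I_Z$ in \eqref{TP3a3}, that is, $M\otimes N^{\vee}$ differs from one of those line bundles by an effective class. The vanishings in \eqref{TP3a3} then give the contradiction. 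The four listed bundles appear to be the extremal corners of this region, namely the cases $x$ very negative, $y$ very negative, and the two mixed corners. Matching every $(x,y)$ to a corner with an effective difference requires careful bookkeeping with the constraint $x+y\ge -(a+b)+1$, and I would check it corner by corner. If some region is not dominated by a corner, I would instead use Theorem \ref{CH}(iii), which kills the sections directly when $(M\otimes N^{\vee})\cdot F<0$, to cover the boundary pieces. Once all cases are excluded, $L$ is maximal and the theorem follows.
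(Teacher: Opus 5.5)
Your proposal is correct and follows essentially the paper's route. You use Serre's construction, then show that any $N$ with $c_1(N)\cdot H>-(a+b)$ would give a nonzero section of $M\otimes N^{\vee}\otimes I_Z$, which the vanishing hypotheses rule out; the paper splits $N$ into sign cases, while you use one region. You are also right that the Cayley--Bacharach bundle should be $(2a-\alpha-2)C_e+(2b-\beta-e-2)F$, so the hypothesis as printed only matches when $\alpha=0$.

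The corner check you left open does go through. Write $M\otimes N^{\vee}=\mathcal{O}_{\mathbb{F}_e}(pC_e+qF)$, so the region is $p,q\ge 0$, $p+q\le s-1$. The ranges $p\le a-\alpha$, $a-\alpha<p\le 2a-\alpha-1$, $2a-\alpha\le p\le 2a+b-\alpha-1$ and $p\ge 2a+b-\alpha$ are dominated by the second, third, fourth and first bundles of \eqref{TP3a3} respectively. So the fallback to Theorem \ref{CH}(iii) is never needed. Lemma \ref{nosecciones} is not needed either, since $N=\mathcal{O}_{\mathbb{F}_e}$ already lies in this region.
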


\begin{proof}
\textbf{Proof of Theorem \ref{TP1}, Theorem \ref{TP2},  Theorem \ref{TP3}}.  We only prove Theorem \ref{TP1}, since the proofs  of Theorems \ref{TP2},  \ref{TP3} are quite analogous.    
Assume that 
$h^0(\mathbb{F}_e,  \mathcal{O}_{\mathbb{F}_e}((a-\alpha)C_e+(2(a-b)-\alpha-\beta-1)F \otimes I_Z)=  h^0(\mathbb{F}_e, \mathcal{O}_{\mathbb{F}_e}((2a-\alpha-1)C_e+(a-2b-\beta-1)F \otimes I_Z)=0$. Let $H=C_e+(e+1)F$ be an ample divisor on $\mathbb{F}_e$, $e \geq 0$ and let $\alpha, \beta \in \{0,1\}$.  Let $Z \subset \mathbb{F}_e$ be a local complete intersection of codimension two and length $l(Z) = c_2-a^2e+ae\alpha-2ab-a\beta+b\alpha$.  By Lemma \ref{CBH}, the pair $(\mathcal{O}_{\mathbb{F}_e}((2a-\alpha-2)C_e-(2b+e+\beta+2)F), Z)$ satisfies  Cayley-Bacharach property. Therefore, there exists an extension 
\begin{equation} \label{E1}
0 \rightarrow \mathcal{O}_{\mathbb{F}_e}(-aC_e+bF) \rightarrow E \rightarrow \mathcal{O}_{\mathbb{F}_e}((a-\alpha)C_e-(b+\beta) F) \otimes I_Z \rightarrow 0
\end{equation}
where $E$ is  locally free with Chern classes $c_1(E)=-\alpha C_e-  \beta F$ and $c_2(E)=c_2$. Moreover,  we have that $H^0(\mathbb{F}_e,E)=0$, because $h^0(\mathbb{F}_e, \mathcal{O}_{\mathbb{F}_e}(-aC_e+bF))= h^0(\mathbb{F}_e, \mathcal{O}_{\mathbb{F}_e}((a-\alpha)C_e-(b+\beta)F))=0$, whenever $b\neq 0$ or $\beta =1$. If $b=0$ and $\beta =0$, then $H^0(\mathbb{F}_e, \mathcal{O}_{\mathbb{F}_e}((a-\alpha)C_e)\otimes I_Z)=0$ because $Z$ is not contained in any curve of bidegree $(a-\alpha, 2(a-b)-\alpha-\beta-1)$. 

Our next goal is to show 
that in the exact sequence $(\ref{E1})$, the line bundle $\mathcal{O}_{\mathbb{F}_e}(-aC_e+bF)$ is maximal.

 Let $L$ be a subline bundle of $E$ such that 
\[c_1(\mathbb{F}_e, \mathcal{O}_{\mathbb{F}_e}(-aC_e+bF))\cdot H < c_1(L) \cdot H.\]
We must have $H^0(\mathbb{F}_e, \mathcal{O}_{\mathbb{F}_e}(-aC_e+bF) \otimes L^\vee)=0$,  otherwise we would have  a morphism $L \rightarrow \mathcal{O}_{\mathbb{F}_e}(-aC_e+bF)$ where $c_1(L) \cdot H \leq c_1(\mathbb{F}_e, \mathcal{O}_{\mathbb{F}_e}(-aC_e+bF))\cdot H $. Since  $H^0(\mathbb{F}_e,E)=0$, we have that $L$ can be written in one of the following ways;
\[\mathcal{O}_{\mathbb{F}_e}(-a_1C_e+b_1F), \,\,\,\, \mathcal{O}_{\mathbb{F}_e}(a_1C_e-b_1F), \,\,\, \mathcal{O}_{\mathbb{F}_e}(-a_1C_e-b_1F)\]
with $0 \leq a_1, b_1$ both non-zero.    We consider all possiblities for $L$, and we will prove that $L$ is not a subline bundle of $E$. Indeed,

\begin{itemize}
\item [(i)] Suppose that $L=\mathcal{O}_{\mathbb{F}_e}(-a_1C_e+b_1F)$, $a_1 \neq 0$ and
\begin{equation} \label{q1}
-a+b <  -a_1+b_1.
\end{equation} From the extension (\ref{E1}), we get
\[\begin{aligned}
0 & \rightarrow \mathcal{O}_{\mathbb{F}_e}((a_1-a)C_e+(b-b_1)F)   \rightarrow E \otimes \mathcal{O}_{\mathbb{F}_e}(a_1C_e-b_1F) \\ &\rightarrow 
 \mathcal{O}_{\mathbb{F}_e}((a_1+a-\alpha)C_e-(b+b_1+\beta)F)  \otimes I_Z \rightarrow 0
\end{aligned}\]
where
$H^0(\mathbb{F}_e, \mathcal{O}_{\mathbb{F}_e}((a_1-a)C_e+(b-b_1)F) )= H^0(\mathbb{F}_e,  \mathcal{O}_{\mathbb{F}_e}((a_1+a-\alpha)C_e-(b+b_1+\beta)F)  \otimes I_Z)=0$ whenever $b+b_1+\beta> 0$.  If $b=b_1=\beta=0$, from inequality (\ref{q1}) we have $a_1<a$. Since $h^0(\mathbb{F}_e, \mathcal{O}_{\mathbb{F}_e}((2a-\alpha-1)C_e)+(a-2b-\beta-1) \otimes I_Z)=0$, it follows that $Z$ is not contained in any curve of bidegree $(2a-\alpha-1, a-2b-\beta-1)$. Therefore $Z$ is not contained in any curve of bidegree $(a+a_1-\alpha, 0)$. Hence, $H^0(\mathbb{F}_e, E \otimes \mathcal{O}_{\mathbb{F}_e}(a_1C_e-b_1F))=0.$

\item [(ii)] Suppose that $L = \mathcal{O}_{\mathbb{F}_e}(a_1C_e-b_1F)$, $b_1 \neq 0$ and 
\begin{equation} \label{q2}
-a+b <  a_1-b_1.
\end{equation}
From the extension (\ref{E1}), we get
\[\begin{aligned}
0 & \rightarrow \mathcal{O}_{\mathbb{F}_e}(-(a_1+a)C_e+(b+b_1)F)  \rightarrow  E \otimes \mathcal{O}_{\mathbb{F}_e}(-a_1C_e+b_1F) \\  &\rightarrow 
 \mathcal{O}_{\mathbb{F}_e}((a-a_1-\alpha)C_e-(b-b_1+\beta)F)  \otimes I_Z \rightarrow 0
\end{aligned}\]
where $H^0(\mathbb{F}_e, \mathcal{O}_{\mathbb{F}_e}(-(a_1+a)C_e+(b+b_1)F) )= 0$.  Note that
$H^0(\mathbb{F}_e, \mathcal{O}_{\mathbb{F}_e}((a-a_1-\alpha)C_e+(b_1-b-\beta)F)) \neq 0$, if and only if $a_1 \leq a-\alpha$ and $b+\beta \leq b_1$.  Assume $a_1 \leq a-\alpha$ (or $b+\beta \leq b_1$). From inequality (\ref{q2}), we have
\[0\leq b_1-b-\beta <  2(a-b)-\alpha-\beta.\]
Since $h^0(\mathbb{F}_e,  \mathcal{O}_{\mathbb{F}_e}((a-\alpha)C_e+(2(a-b)-\alpha-\beta-1) \otimes I_Z)=0$ , it follows that $ Z$ is not contained in any curve of bidegree $(a-\alpha, 2(a-b)-\alpha-\beta-1)$. Hence, $Z$ is not contained in any curve of bidegree $(a-a_1-\alpha, b-b_1-\beta)$ and
$H^0(\mathbb{F}_e, \mathcal{O}_{\mathbb{F}_e}((a-a_1-\alpha)C_e-(b_1-b-\beta)F) \otimes I_Z)=0$. Therefore, $H^0(\mathbb{F}_e, E \otimes \mathcal{O}_{\mathbb{F}_e}(-a_1C_e+b_1F))=0$.

\item [(iii)] Suppose that $L = \mathcal{O}_{\mathbb{F}_e}(-a_1C_e-b_1F)$, $a_1, b_1$ both  non-zero, and
\begin{equation} \label{q3}
-a+b <  -a_1-b_1.
\end{equation} 
From the extension (\ref{E1}), we get 
\[\begin{aligned}
0 & \rightarrow \mathcal{O}_{\mathbb{F}_e}((a_1-a)C_e+(b+b_1)F) \rightarrow  E \otimes \mathcal{O}_{\mathbb{F}_e}(a_1C_e+b_1F) \\ &   \rightarrow 
 \mathcal{O}_{\mathbb{F}_e}((a+a_1-\alpha)C_e-(b-b_1+\beta)F)  \otimes I_Z \rightarrow 0
\end{aligned}\]
where $H^0(\mathbb{F}_e, \mathcal{O}_{\mathbb{F}_e}((a_1-a)C_e+(b+b_1)F)=0$ which implies $a_1 < a$.  Note that
$H^0(\mathbb{F}_e,  \mathcal{O}_{\mathbb{F}_e}((a+a_1-\alpha)C_e-(b-b_1+\beta)F)) \neq 0$, if and only if $0\leq a_1+a -\alpha$ and $b+\beta \leq b_1$.   From inequality (\ref{q3}), we have
\[0\leq b_1-b-\beta <  a-2b-\beta.\]
Since $h^0(\mathbb{F}_e,  \mathcal{O}_{\mathbb{F}_e}((2a-\alpha-1)C_e+(a-2b-\beta-1) \otimes I_Z)=0$ , it follows that $ Z$ is not contained in any curve of bidegree $(2a-\alpha-1, a-2b-\beta-1)$.  Hence $Z$ is not contained in any curve of bidegree $(a+a_1-\alpha, b_1-b-\beta)$ and  
$H^0(\mathbb{F}_e, \mathcal{O}_{\mathbb{F}_e}((a+a_1)C_e-(b-b_1+\beta)F) \otimes I_Z)=0$. Therefore, $H^0(\mathbb{F}_e, E \otimes \mathcal{O}_{\mathbb{F}_e}(a_1C_e+b_1F))=0$.
\end{itemize}
From (i), (ii) and (iii), it follows that $H^0(\mathbb{F}_e, E \otimes L^\vee)=0$. Therefore $L$ is not a subline bundle of $E$ and $\mathcal{O}_{\mathbb{F}_e}(-aC_e+bF)$ is maximal as we desired.

Finally, we proceed to show that $E$ is $H-$stable.  Let $L$ be a subline bundle of $E$, since that $\mathcal{O}_{\mathbb{F}_e}(-aC_e+bF)$ is maximal, we have
\[c_1(L) \cdot H \leq c_1(\mathcal{O}_{\mathbb{F}_e}(-aC_e+bF)) \cdot H = -a+b < \mu_{H}(E) = \frac{-\alpha-\beta }{2}, \]
because $s=2(a-b)-\alpha-\beta >0$. Therefore,  $E$ is $H-$stable which completes the proof.


\end{proof}

The following proposition gives numerical conditions under which the hypothesis (\ref{TP1a1}) (resp.,  hypothesis (\ref{TP2a2}), and   (\ref{TP3a3}))  of Theorem \ref{TP1}  (resp., Theorem \ref{TP2}, and Theorem \ref{TP3}) is satisfied.

\begin{Proposition} \label{PP}
\begin{enumerate}
    \item Let $\alpha, \beta \in \{0,1\}$, $c_2 \geq 2$, $a,b \in \mathbb{N}$, $a\neq 0$. Let $Z \subset \mathbb{F}_e$, $e\geq 0$ be a local complete intersection of codimension two of lenght $l(Z)=c_2-a^2e+ae\alpha-2ab -a\beta+b\alpha > 0$.  If any of the following conditions is satisfied

\begin{itemize}
\item [(i)]  $e(2a-\alpha-1) \leq 2(a-b)-\alpha-\beta$ and $(2a-\alpha)(2(2a-2b-\alpha-\beta)-e(2a-\alpha-1))+ 2(a^2e-ae\alpha+2ab+a\beta-b\alpha) \leq 2c_2$,

\item [(ii)]  $  2(a-b)-\alpha-\beta < e(2a-\alpha-1)$ and $(2a-m-\alpha)(2(2a-2b-\alpha-\beta)-e(2a--m-\alpha-1))+ 2(a^2e-ae\alpha+2ab+a\beta-b\alpha) \leq 2c_2$, if there is some smallest integer $0< m < 2a-\alpha$ such that $2(2a-m-\alpha-1) \leq 2a-2b-\alpha-\beta$, 

\end{itemize}
then 
\begin{align*}
        h^0(\mathbb{F}_e,  \mathcal{O}_{\mathbb{F}_e}((a-\alpha)C_e+ & (2(a-b)-\alpha-\beta-1)F \otimes I_Z)=  \\ & h^0(\mathbb{F}_e, \mathcal{O}_{\mathbb{F}_e}((2a-\alpha-1)C_e+(a-2b-\beta-1)F \otimes I_Z)=0.
    \end{align*}
    
    \item Let $\alpha, \beta \in \{0,1\}$, $c_2 \geq 2$, $a,b \in \mathbb{N}$, $b \neq 0$. Let $Z \subset \mathbb{F}_e$, $e\geq 0$ be a local complete intersection of codimension two of lenght $l(Z)=c_2-a^2e-ae\alpha-2ab +a\beta-b\alpha >0$.  If any of the following conditions is satisfied

\begin{itemize}
\item [(i)]  $e(2b-2a-\alpha-\beta) \leq e+2b-beta$ and $(2b-2a-\alpha-\beta)(2(2b-\beta)-e(2b-2a-\alpha-\beta-1))+2(a^2e+ae\alpha+2ab-a\beta+b\alpha) \leq 2c_2$,

\item [(ii)]  $ e+2b-beta <   e(2b-2a-\alpha-\beta) $ and $(2b-2a-m-\alpha-\beta)(2(2b-\beta)-e(2b-2a-m-\alpha-\beta-1))+2(a^2e+ae\alpha+2ab-a\beta+b\alpha) \leq 2c_2$, if there is some smallest integer $0< m < 2b-2a-\alpha-\beta$ such that $e(2b-2a-\alpha-\beta) \leq e(m+1)+2b-\beta$, 

\end{itemize}
then \[
\begin{aligned}
        h^0(\mathbb{F}_e,  \mathcal{O}_{\mathbb{F}_e}& (2(b-a)-\alpha-\beta-1)C_e+  (b-\beta)F \otimes I_Z)=  \\ & h^0(\mathbb{F}_e, \mathcal{O}_{\mathbb{F}_e}((b-2a-\alpha-1)C_e+(2b-\beta-1)F \otimes I_Z)=0, 
    \end{aligned}\]
    
\item Let $\alpha, \beta \in \{0,1\}$, $c_2 \geq 2$, $a,b \in \mathbb{N}$, $a,b \neq 0$. Let $Z \subset \mathbb{F}_e$, $e\geq 0$ be a local complete intersection of codimension two of lenght $l(Z)=c_2-a^2e+ae\alpha+2ab -a\beta-b\alpha >0$.  If any of the following conditions is satisfied

\begin{itemize}
\item [(i)]  $(e-1)(2a+2b-\alpha-\beta) \leq e$ and $(2a+2b-\alpha-\beta)(e+(2-e)(2a+2b-\alpha-\beta))+2(a^2e-ae\alpha-2ab+a\beta-b\alpha) \leq 2c_2$,

\item [(ii)]  $  e < (e-1)(2a+2b-\alpha-\beta)$ and $(2a+2b-\alpha-\beta-m)(e(m+1)+(2-e)(2a+2b-\alpha-\beta))+2(a^2e-ae\alpha-2ab+a\beta-b\alpha) \leq 2c_2$, if there is some smallest integer $0< m < 2a+2b-\alpha-\beta$ such that $(e-1)(2a+2b-\alpha-\beta) \leq (m+1)e$, 

\end{itemize}
then \[
\begin{aligned}
        h^0(\mathbb{F}_e,  \mathcal{O}_{\mathbb{F}_e}&  (2(a+b)-\alpha-\beta-1)C_e+  (b-\beta)F \otimes I_Z)= \\ &  h^0(\mathbb{F}_e, \mathcal{O}_{\mathbb{F}_e}((a-\alpha)C_e+  (2(a+b)-\alpha-\beta-1)F \otimes I_Z)= \\ & h^0(\mathbb{F}_e, \mathcal{O}_{\mathbb{F}_e}((2a-\alpha-1)C_e+  (a+2b-\beta-1)F \otimes I_Z) = \\ &
        h^0(\mathbb{F}_e, \mathcal{O}_{\mathbb{F}_e}((2a+b-\alpha-1)C_e+  (2b-\beta-1)F \otimes I_Z)=0, 
    \end{aligned}\]
\end{enumerate}
\end{Proposition}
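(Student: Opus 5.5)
The plan is to treat each of the three items separately and, within each item, each of the line bundles $M=\mathcal{O}_{\mathbb{F}_e}(xC_e+yF)$ that appear. For each one I would use the restriction sequence
\[0 \rightarrow M\otimes I_Z \rightarrow M \rightarrow M\otimes \mathcal{O}_Z \rightarrow 0,\]
which reduces $h^0(M\otimes I_Z)=0$ to injectivity of the restriction map $H^0(\mathbb{F}_e,M)\rightarrow H^0(Z,M\otimes\mathcal{O}_Z)\cong \mathbb{C}^{l(Z)}$. The necessary numerical input is $h^0(\mathbb{F}_e,M)\le l(Z)$. The first step is to show that the displayed inequalities in (i) and (ii) are exactly this bound, written out via Theorem \ref{CH}.

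In item (1) I would first single out the ``largest'' of the two bundles, namely the one with first coefficient $2a-\alpha-1$. I would then compute $h^0$ for it and note that the bound for the other bundle follows by the same computation, or by inclusion whenever one bidegree dominates the other.

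The computation of $h^0(M)$ splits according to $M\cdot C_e = y-ex$.
\begin{itemize}
\item If $M\cdot C_e\ge -1$, this is condition (i). Theorem \ref{CH}(v) gives $h^0(M)=\chi(M)=(x+1)(y+1)-e\frac{x(x+1)}{2}$. Substituting $x=2a-\alpha-1$ and $y=2(a-b)-\alpha-\beta-1$ (resp.\ the corresponding values in items (2) and (3)), and multiplying $h^0(M)\le l(Z)$ by $2$, should reproduce verbatim the inequality $(2a-\alpha)(2(2a-2b-\alpha-\beta)-e(2a-\alpha-1))+2(a^2e-ae\alpha+2ab+a\beta-b\alpha)\le 2c_2$. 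This uses the given formula for $l(Z)$.
\item If $M\cdot C_e<-1$, this is condition (ii). I would apply Theorem \ref{CH}(vi) repeatedly: $H^0(M)\cong H^0(M(-C_e))\cong\cdots\cong H^0(M(-mC_e))$. Here $m$ is the smallest integer with $M(-mC_e)\cdot C_e\ge -1$, which is exactly the ``smallest integer $0<m<\cdots$'' of the statement. Then (v) gives $h^0(M)=\chi(M(-mC_e))$, and the same substitution yields the inequality in (ii).
\end{itemize}
Items (2) and (3) are the same bookkeeping with the roles of the coefficients and the four bundles changed accordingly.

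The second step, and the main obstacle, is to pass from $h^0(M)\le l(Z)$ to injectivity of the restriction map for the $Z$ in the statement. I would argue that a nonzero section $\sigma\in H^0(M\otimes I_Z)$ defines a curve $D\in|M|$ of bidegree $(x,y)$ containing $Z$. I would then try to derive a contradiction by comparing $l(Z)$ with the number of conditions such a curve can absorb. Concretely, I would decompose $D=mC_e+D'$ using the fixed part found in step one, and bound $l(Z\cap D')$ via $D'\cdot H$ and $D'\cdot F$.

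I expect this to be the genuinely delicate point. For special $Z$, for instance one lying on a fiber or on $C_e$, a purely numerical count does not by itself force $Z$ off every curve of bidegree $(x,y)$. I would therefore first try to show that the length bound makes $D'\cdot(\text{component})<l(Z)$, and hence impossible, for every component. If that fails, the argument has to be phrased as the statement ``$Z$ imposes independent conditions on $|M|$,'' which is what is used in the proof of Theorems \ref{TP1}--\ref{TP3}.
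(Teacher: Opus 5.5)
Your first step is the paper's entire proof. The paper sets $L=\mathcal{O}_{\mathbb{F}_e}((2a-\alpha-1)C_e+(2a-2b-\alpha-\beta-1)F)$ and uses Theorem \ref{CH}(v),(vi) to see that (i), resp.\ (ii), says exactly $h^0(L)\le l(Z)$; your substitution $x=2a-\alpha-1$, $y=2(a-b)-\alpha-\beta-1$ reproduces this. It then gets both vanishings from $H^0(M\otimes I_Z)\hookrightarrow H^0(L\otimes I_Z)$, since $L-M$ is $(a-1)C_e$, resp.\ $(a-\alpha)F$. Note that this $L$ is not ``the bundle with first coefficient $2a-\alpha-1$'' of the statement. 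Neither of the two bundles dominates the other, and it is their common upper bound $L$ that makes the inclusion step work. In item (2) this bound is $(2b-2a-\alpha-\beta-1)C_e+(2b-\beta-1)F$, and in item (3) it is $(N-1)(C_e+F)$ with $N=2a+2b-\alpha-\beta$. To pass from $h^0(L)\le l(Z)$ to $H^0(L\otimes I_Z)=0$, the paper simply writes ``from Theorem \ref{CH} it follows''.

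You are right that this passage is the crux. However, your proposed route cannot succeed: writing $D=mC_e+D'$ and bounding $l(Z\cap D')$ will never give a contradiction, because for arbitrary $Z$ the statement is false. The hypotheses only constrain $l(Z)$, not where $Z$ sits. Take $e=0$, $\alpha=\beta=0$, $a=1$, $b=0$, $c_2=4$. Then $l(Z)=4$, condition (i) reads $0\le 2$ and $8\le 2c_2$, and $L=\mathcal{O}_{\mathbb{F}_0}(C_e+F)$ is the first bundle of the conclusion. Let $Z$ be four distinct points on a fibre $F_0$. A section of $L$ restricts to $F_0\cong\mathbb{P}^1$ with degree $L\cdot F=1$, so vanishing at four points forces it to vanish on $F_0$. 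Hence $H^0(L\otimes I_Z)\cong H^0(\mathcal{O}_{\mathbb{F}_0}(C_e))$, which is $2$-dimensional, not $0$. More generally, any $Z$ placed on a curve of $|M|$, with $M$ effective and nontrivial, violates the conclusion. The inequality $h^0(M)\le l(Z)$ is only necessary for injectivity of $H^0(M)\to H^0(M\otimes\mathcal{O}_Z)$, never sufficient.

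What is true, and what the later arguments actually use (the open sets $V_1,V_2\subset\mathrm{Hilb}^l(\mathbb{F}_e)$ in Proposition \ref{dimensionstratum}), is the statement for general $Z$. Add points one at a time, each outside $Z'$ and the base locus of $|L\otimes I_{Z'}|$; each new point drops $h^0$ by one. This gives $h^0(L\otimes I_Z)=\max\{0,h^0(L)-l(Z)\}=0$ for general reduced $Z$, and by semicontinuity the vanishing holds on a nonempty open subset of the Hilbert scheme. So your fallback ``$Z$ imposes independent conditions on $|L|$'' must enter as a hypothesis, or be replaced by ``$Z$ general''; it cannot be derived from the stated assumptions.
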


\begin{proof}
We only prove (1), since the proof of (2) and (3) are quite analogous.  Let $Z \subset \mathbb{F}_e$, $e\geq 0$ be a local complete intersection of codimension two of lenght $l(Z)=c_2-a^2e+ae\alpha-2ab -a\beta+b\alpha >0$ and let $L:=\mathcal{O}_{\mathbb{F}_e}((2a-\alpha-1)C_e+(2a-2b-\alpha-\beta-1)F)$.  From Theorem \ref{CH}, it follows that $H^0(\mathbb{F}_e, L \otimes I_Z)=0$, If any of the conditions (i) and (ii)  are satisfied. Therefore, $Z$ is not contained in any curve of bidegree $(2a-\alpha-1, 2a-2b-\alpha-\beta-1)$. Hence, \begin{align*}
        h^0(\mathbb{F}_e,  \mathcal{O}_{\mathbb{F}_e}((a-\alpha)C_e+ & (2(a-b)-\alpha-\beta-1)F \otimes I_Z)=  \\ & h^0(\mathbb{F}_e, \mathcal{O}_{\mathbb{F}_e}((2a-\alpha-1)C_e+(a-2b-\beta-1)F \otimes I_Z)=0,
    \end{align*}
    as we desired  
\end{proof}

\section{ Stratification of the moduli space $M_{\mathbb{F}_e}(2,c_1,c_2)$ according to the invariant $S_H$}

Let $H:=C_e+ (e+1)F$ be an ample divisor on $\mathbb{F}_e$, $e \geq 0$. In this section we use the Segre invariant to induce a
stratification of the moduli space  $M_{\mathbb{F}_e,H}(2;c_1,c_2)$
of $H-$stable vector bundles of rank  $2$, Chern classes $c_1$,
$c_2$ on $\mathbb{F}_e$, $e \geq 0$. Working locally in the \'etale
topology, we can assume that there is a family $\mathcal{E}$
parameterized by $M_{\mathbb{F}_e,H}(2;c_1,c_2)$.

From Theorem \ref{semicontinuous}, the function $S:
M_{\mathbb{F}_e, H}(2;c_1,c_2) \longrightarrow \mathbb{Z}$ induces a
stratification of $M_{\mathbb{F}_e,H}(2;c_1,c_2)$ into locally
closed subsets
\[M_{\mathbb{F}_e,H}(2;c_1,c_2;s) := \{E \in M_{\mathbb{F}_e, H}(2;c_1,c_2) : S_H(E) = s\}\]
according to the value of $s$. 

In the following theorems, we stablish a lower bound of the dimension of the stratum $M_{\mathbb{F}_e,H}(2;c_1,c_2;s)$ whenever  it is non-empty.

\begin{Theorem} \label{dimmax1}
Let $H=C_e+(e+1)F$  be an ample divisor on $\mathbb{F}_e$, $e \geq 0$ and let $ \beta \in \{0,1\}$, $c_2 \geq 2$, $s, l_1,l_2, l_3 > 0$.  Suppose that there exist $a_i,b_i \in \mathbb{N}$,  for $i=1,2,3$ such that 
\[
\begin{aligned}
    s  = 2(a_1-b_1)-\beta & = 2(b_2-a_2)-\beta = 2(a_3+b_3)-\beta, \,\,\, a_1, b_2 ,a_3,b_3 \neq 0 \\
    l_1 &= c_2-a_1^2e-2a_1b_1-a_1\beta \\
    l_2 &= c_2-a_2^2e-2a_2b_2 +a_2\beta \\
    l_3 &= c_2-a_3^2e+2a_3b_3-a_3\beta
\end{aligned}\] 
Let $D_1:= a_1C_e-b_1F$ be a divisor on $\mathbb{F}_e$, and let $Z_1 \subset \mathbb{F}_e$ be a local complete intersection of codimension two of lenght $l_1$,   satisfiying the hypothesis of  Theorem \ref{TP1} taking $\alpha =0$. Let $D_2:= -a_2C_e+b_2F$ be a divisor on $\mathbb{F}_e$, and  let $Z_2 \subset \mathbb{F}_e$ be a local complete intersection of codimension two of lenght $l_2$,   satisfiying the hypothesis of  Theorem \ref{TP2} taking $\alpha =0$.  Let $D_3:= a_3C_e+b_3F$ be a divisor on $\mathbb{F}_e$, and let $Z_3 \subset \mathbb{F}_e$ be a local complete intersection of codimension two of lenght $l_3$,   satisfiying the hypothesis of  Theorem \ref{TP3} taking $\alpha =0$. Then, the stratum $M_{\mathbb{F}_e,H}(2;-\beta F,c_2;s)$ is non-empty and it has an irredubible subvariety of dimension
\[\begin{aligned}
     \max_{a_i,b_i} \{ 3l+ \frac{1}{2} (2a_1-1)(4b_1+2a_1e+2\beta+2),  & \,\,  3l+ \frac{1}{2}(2a_2+1)(4b_2-2a_2e-2\beta-4e-2), \\ & 3l+ \frac{1}{2}(2a_3-1)(2\beta+2a_3e-4b_3+2) \}
\end{aligned}\]
where the maximum is taken over all numbers $a_i,b_i$ that satisfies  the hypothesis of theorem. Moreover, 
\[\begin{aligned}
   \dim\, & M_{\mathbb{F}_e}(2; -\beta F,c_2;s) \geq 
    \max_{a_i,b_i} \{ 3l+ \frac{1}{2} (2a_1-1)(4b_1+2a_1e+2\beta+2), \\ & 3l+ \frac{1}{2}(2a_2+1)(4b_2-2a_2e-2\beta-4e-2),  \,\, 3l+ \frac{1}{2}(2a_3-1)(2\beta+2a_3e-4b_3+2) \}
\end{aligned}\]  
\end{Theorem}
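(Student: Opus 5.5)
Non-emptiness of $M_{\mathbb{F}_e,H}(2;-\beta F,c_2;s)$ follows at once from Theorems \ref{TP1}, \ref{TP2} and \ref{TP3} with $\alpha=0$. For any one of the three admissible triples $(a_i,b_i,Z_i)$ these theorems produce an $H$-stable bundle $E$ with $c_1(E)=-\beta F$, second Chern class $c_2$ and $S_H(E)=s$. This $E$ sits in an extension
\[0\rightarrow \mathcal{O}_{\mathbb{F}_e}(-D_i)\rightarrow E\rightarrow \mathcal{O}_{\mathbb{F}_e}(D_i-\beta F)\otimes I_{Z_i}\rightarrow 0 ,\]
and $\mathcal{O}_{\mathbb{F}_e}(-D_i)$ is maximal. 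To bound the dimension I would count the parameters of such extensions, separately for each $i=1,2,3$, and then take the largest of the three counts.

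Fix $i$. Let $W_i$ be the open subset of the Hilbert scheme $\mathrm{Hilb}^{l_i}(\mathbb{F}_e)$ consisting of those $Z$ that satisfy the hypotheses of the corresponding theorem. Each of these hypotheses is the vanishing of $h^0$ of a twisted ideal sheaf, or a Cayley--Bacharach condition, so $W_i$ is open. It is also non-empty, since it contains $Z_i$. Hence $W_i$ is irreducible of dimension $2l_i$. Over $W_i$ I would form the projective bundle $\mathbb{P}_i$ whose fibre over $Z$ is $\mathbb{P}\,\mathrm{Ext}^1(\mathcal{O}(D_i-\beta F)\otimes I_Z,\mathcal{O}(-D_i))$, restricted to the open set of locally free extensions. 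I would compute the fibre dimension from the local-to-global spectral sequence:
\[\dim \mathrm{Ext}^1 = h^1(\mathcal{O}(-2D_i+\beta F)) + l_i - \bigl(\text{correction from } H^0(\mathcal{O}(-2D_i+\beta F+K))\text{ and } h^2\bigr).\]
The cohomology of the line bundle $\mathcal{O}(-2D_i+\beta F)$ is then read off from Theorem \ref{CH} and Riemann--Roch. For instance, for $i=1$ we have $-2D_1+\beta F=-2a_1C_e+(2b_1+\beta)F$. Here $h^0=0$, and for $a_1\ge 1$ also $h^2=0$, so $h^1=-\chi=\frac12(2a_1-1)(4b_1+2a_1e+2\beta+2)$ once the terms are rearranged. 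The same computation for $i=2,3$ should give exactly the other two expressions in the statement. Together with $2l$ from $W_i$ and $l$ from the local $\mathcal{E}xt$ part, this yields the total $3l+h^1$. (The stated formula should be read with $l=l_i$ in each term.)

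Next I would consider the classifying map $\mathbb{P}_i\to M_{\mathbb{F}_e,H}(2;-\beta F,c_2;s)$. Its image is irreducible, and I would show that this map is generically finite (ideally injective) onto its image. Since $\mathcal{O}(-D_i)$ is maximal, any such line subbundle with the given $H$-degree has to be studied. One argues that $h^0(E\otimes\mathcal{O}(D_i))=1$, which the vanishings used in the proof of Theorem \ref{TP1} essentially give. From this, $E$ determines the subsheaf $\mathcal{O}(-D_i)\subset E$, hence also $Z$ and the extension class up to scalar. It follows that the image has dimension $\dim\mathbb{P}_i=3l_i+h^1(\mathcal{O}(-2D_i+\beta F))$ up to the $-1$ from projectivisation, and this normalisation has to be tracked carefully. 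The inequality for $\dim M_{\mathbb{F}_e}(2;-\beta F,c_2;s)$ follows by maximising over all admissible $(a_i,b_i)$.

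The main obstacle is the injectivity step: showing that a generic $E$ in the image has a unique maximal subsheaf of this form. When $e=0$, or in general, different decompositions of $s$ could give several line subbundles of the same $H$-degree (Remark \ref{NotUnique}). The cleanest route I see is to first prove $\dim H^0(E(D_i))=1$ using the twisted sequence together with the hypotheses on $Z_i$. Without that, the count only bounds the dimension of $\mathbb{P}_i$, and the conclusion would have to be weakened to a statement about the image modulo finite fibres.
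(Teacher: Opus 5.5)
Your strategy is the paper's own (Propositions \ref{dimensionstratum} and \ref{dimensionstratum2}): an open set of $\mathrm{Hilb}^{l}(\mathbb{F}_e)$, the projectivised $\mathrm{Ext}^1$-bundle over it, the classifying map to the stratum, and fibres open in $\mathbb{P}H^0(E(D_i))$. The two steps you leave open are genuine gaps, and they cannot be closed so as to give the stated numbers.

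First, the $-1$ is not a matter of normalisation. For $i=1$ we have $h^2(\mathcal{O}(-2D_1+\beta F))=h^0(\mathcal{O}((2a_1-2)C_e-(2b_1+\beta+e+2)F))=0$ and $\mathrm{Ext}^2=0$. Hence $\dim\mathrm{Ext}^1=l_1+h^1$ exactly and $\dim\mathbb{P}_1=3l_1+h^1-1$. Even with finite fibres you only get $3l_1+\frac12(2a_1-1)(4b_1+2a_1e+2\beta+2)-1$, one less than claimed. The paper writes this $-1$ in its inequality and then silently drops it.

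Second, twisting the extension by $D_i$ and using $H^1(\mathcal{O}_{\mathbb{F}_e})=0$ gives $h^0(E(D_i))=1+h^0(\mathcal{O}(2D_i-\beta F)\otimes I_Z)$. This equals $1$ when $2D_i-\beta F$ is non-effective, which covers $i=1,2$ away from the boundary cases $b_1=\beta=0$ and $a_2=0$. For $i=3$, however, $2D_3-\beta F=2a_3C_e+(2b_3-\beta)F$ is always effective, and none of the vanishings in Theorem \ref{TP3} controls it. The paper's argument (``$2D-\beta F$ is not effective'') fails at the same point.

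Concretely, take $e=\beta=0$, $a_3=b_3=1$, $c_2=6$, so $l_3=8$. A general $Z$ of length $8$ satisfies every hypothesis of Theorem \ref{TP3}: the systems $|3C_0+F|$, $|C_0+3F|$, $|C_0+2F|$, $|2C_0+F|$ have at most $8$ sections, and the Cayley--Bacharach pair is $(\mathcal{O},I_Z)$. Yet $h^0(\mathcal{O}(2C_0+2F))=9>8$, so $h^0(E(D_3))\ge 2$ and the fibres are positive-dimensional. Indeed the case-$3$ value $3l_3+\frac12(2a_3-1)(2\beta+2a_3e-4b_3+2)=23$ exceeds $\dim M_{\mathbb{F}_0,H}(2;0,6)=21$.

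Your $\mathrm{Ext}^1$ bookkeeping also needs correcting. The obstruction in the local-to-global sequence lies in $H^2(\mathcal{O}(-2D_i+\beta F))$. That group is dual to $H^0(\mathcal{O}(2D_i-\beta F+K_{\mathbb{F}_e}))$, not to $H^0(\mathcal{O}(-2D_i+\beta F+K))$. It vanishes for $i=1,2$ but not for $i=3$; for $e=\beta=0$ it is $H^0(\mathcal{O}((2a_3-2)C_0+(2b_3-2)F))^{*}\neq 0$. In case $3$, $\mathrm{Ext}^2=0$ by the third vanishing of Theorem \ref{TP3}. So $\dim\mathrm{Ext}^1=l_3-\chi(\mathcal{O}(-2D_3+\beta F))=l_3+h^1-h^2$, which is what the stated third term encodes, not $l_3+h^1$. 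For $i=2$, $h^1(\mathcal{O}(2a_2C_e-(2b_2-\beta)F))=\frac12(2a_2+1)(4b_2+2a_2e-2\beta-2)$. This exceeds the stated expression by $2e(a_2+1)(2a_2+1)$ when $e>0$. So ``exactly the other two expressions'' is false in both cases, though the $i=2$ discrepancy is harmless for a lower bound.

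Your worry about other line subbundles of the same $H$-degree is a red herring. Every point of $\mathbb{P}_i$ over $E$ carries the same subsheaf $\mathcal{O}(-D_i)$, so only $h^0(E(D_i))$ matters. Also, Cayley--Bacharach is not an open condition on the Hilbert scheme in general. Keep it out of $W_i$ and use openness of local freeness in the family, as the paper does.

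What the method actually yields, for general $Z\in W_i$, is
$3l_i-\chi(\mathcal{O}(-2D_i+\beta F))-1-h^0(\mathcal{O}(2D_i-\beta F)\otimes I_Z)$.
This is strictly below the stated value for $i=1$ and $i=3$.
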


\begin{Theorem} \label{dimmax2}
Let $H=C_e+(e+1)F$  be an ample divisor on $\mathbb{F}_e$, $e \geq 0$ and let $ \beta \in \{0,1\}$, $c_2 \geq 2$, $s > 0$.  Suppose that there exist $a_i,b_i \in \mathbb{N}$, $b_i \neq 0$ for $i=1,2,3$ such that  \[\begin{aligned}
    s= 2(a_1-b_1)-\beta-1 & = 2(b_2-a_2)-\beta-1 = 2(a_3+b_3)-\beta-1 \,\,\, a_2, b_1 ,a_3,b_3 \neq 0 \\
    l_1 & = c_2-a_1^2e+a_1e-2a_1b_1+b_1-a_1\beta\\
    l_2 & =  c_2-a_2^2e-a_2e-2a_2b_2-b_2 +a_2\beta \\
    l_3 & = c_2-a_3^2e+a_3e+2a_3b_3-b_3 -a_3\beta\\
\end{aligned}\] 
Let $D_1:= a_1C_e-b_1F$ be a divisor on $\mathbb{F}_e$, and  let $Z_1 \subset \mathbb{F}_e$ be a local complete intersection of codimension two and lenght $l_1$,   satisfiying the hypothesis of  Theorem \ref{TP1} taking $\alpha =1$. Let $D_2:= -a_2C_e+b_2F$ be a divisor on $\mathbb{F}_e$, and  let $Z_2 \subset \mathbb{F}_e$ be a local complete intersection of codimension two and lenght $l_2$,   satisfiying the hypothesis of  Theorem \ref{TP2} taking $\alpha =1$. Let $D_3:= a_3C_e+b_3F$ be a divisor on $\mathbb{F}_e$,   and let $Z_3 \subset \mathbb{F}_e$ be a local complete intersection of codimension two and lenght $l_3 $,   satisfiying the hypothesis of  Theorem \ref{TP3} taking $\alpha =1$. Then, the stratum $M_{\mathbb{F}_e,H}(2;- C_e-\beta F,c_2;s)$ is non-empty and it has an irredubible subvariety of dimension
\[\begin{aligned}
\max_{a_i,b_i}\{
   & 3l+ \frac{1}{2} (2a_1-2)(4b_1+2a_1e- e+2\beta+2), \\
   & 3l+ \frac{1}{2}(2a_2+2)(4b_2-2a_2e- e-2\beta-4e-2), \,\,  3l+ \frac{1}{2}(2a_3-2)(2\beta+2a_3e -4b_3+1) \}
   \end{aligned}
\]
where the maximum is taken over all numbers $a_i,b_i$ that satisfies  the hypothesis of theorem. Moreover, 
\[\begin{aligned}
   \dim\, & M_{\mathbb{F}_e}(2;- C_e -\beta F,c_2;s) \geq 
    \max_{a_i,b_i}\{
   3l+ \frac{1}{2} (2a_1-2)(4b_1+2a_1e- e+2\beta+2), \\
   & 3l+ \frac{1}{2}(2a_2+2)(4b_2-2a_2e- e-2\beta-4e-2), \,\, 3l+ \frac{1}{2}(2a_3-2)(2\beta+2a_3e -4b_3+1) \}
   \end{aligned}
\]
\end{Theorem}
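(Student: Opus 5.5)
Non-emptiness and the irreducible subvariety will come from Theorems \ref{TP1}, \ref{TP2} and \ref{TP3} with $\alpha=1$, followed by a count of the parameters of the resulting extensions, exactly as in the proof of Theorem \ref{dimmax1}. Fix one of the three cases, say $i=1$. Theorem \ref{TP1} with $\alpha=1$ applies to $D_1$ and $Z_1$ and produces an $H$-stable bundle $E$ with $c_1(E)=-C_e-\beta F$ and second Chern class $c_2$. It sits in an extension
\[0\rightarrow \mathcal{O}_{\mathbb{F}_e}(-D_1)\rightarrow E\rightarrow \mathcal{O}_{\mathbb{F}_e}(D_1-C_e-\beta F)\otimes I_{Z_1}\rightarrow 0\]
with $S_H(E)=s$ and $\mathcal{O}_{\mathbb{F}_e}(-D_1)$ maximal. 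So the stratum $M_{\mathbb{F}_e,H}(2;-C_e-\beta F,c_2;s)$ is non-empty, and the same works for $i=2,3$. Now let $Z$ vary in the open subset $U_i\subset \mathrm{Hilb}^{l_i}(\mathbb{F}_e)$ where the $h^0$-vanishing conditions and the Cayley--Bacharach condition hold. By semicontinuity this set is open, and it is non-empty because $Z_i\in U_i$. It is also irreducible of dimension $2l_i$, since the Hilbert scheme of points of a smooth surface is irreducible.

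\textbf{Step 1 (the parameter space).} Over $U_i$ form the projective bundle $\mathbb{P}_i$ whose fibre at $Z$ is $\mathbb{P}(\mathrm{Ext}^1(\mathcal{O}(D_1-C_e-\beta F)\otimes I_Z,\mathcal{O}(-D_1)))$, restricted to the open set of locally free extensions. After shrinking $U_i$ we may assume the Ext dimension is constant. I would compute it with Riemann--Roch and Serre duality:
\[\dim\mathrm{Ext}^1=h^1(\mathcal{O}(-2D_1+C_e+\beta F))+l_i+h^0(\mathcal{O}(2D_1-C_e-\beta F+K))\cdot(\ldots).\]
The cleanest route is $\chi$ plus the vanishing of $\mathrm{Hom}$ and $\mathrm{Ext}^2$, which follow from Theorem \ref{CH}. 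This gives $\dim \mathrm{Ext}^1 = l_i-\chi(\mathcal{O}(-2D_i+C_e+\beta F))$ in the first two cases. Case 3 differs slightly because there $h^0(\mathcal{O}(-2D_3+C_e+\beta F))$ may be nonzero, so its correction term has to be tracked separately.

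\textbf{Step 2 (the map to moduli and its fibres).} The universal extension gives a family over $\mathbb{P}_i$, hence a classifying morphism $\mathbb{P}_i\to M_{\mathbb{F}_e,H}(2;-C_e-\beta F,c_2;s)$. Next I bound the fibres. Since $\mathcal{O}(-D_i)$ is maximal and $H^0(E\otimes\mathcal{O}(D_i))$ is controlled by the vanishing hypotheses, as in the maximality argument of Theorem \ref{TP1}, the subbundle $\mathcal{O}(-D_i)\subset E$ is unique. Therefore $E$ determines $Z$ and the extension class up to scalar, and the morphism is injective. The image is then an irreducible constructible set of dimension $2l_i+\dim\mathrm{Ext}^1-1$. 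Its closure inside the stratum is the required irreducible subvariety. Taking the maximum over the admissible $(a_i,b_i)$ gives the lower bound on $\dim M_{\mathbb{F}_e}(2;-C_e-\beta F,c_2;s)$.

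\textbf{Step 3 (matching the formula).} Finally I substitute $D_1=a_1C_e-b_1F$ into $\chi(\mathcal{O}(aC_e+bF))=(a+1)(b+1)-e\,a(a+1)/2$ with $a=-2a_1+1$ and $b=2b_1+\beta$. The result $-\chi$ should equal $\tfrac12(2a_1-2)(4b_1+2a_1e-e+2\beta+2)$ up to the constant absorbed into $3l$. The analogous substitutions handle cases 2 and 3. I expect the main obstacle to be Step 2: uniqueness of the maximal subbundle, which makes the map injective. Maximality alone allows other subbundles with the same $H$-degree, because $L$ is not unique (Remark \ref{NotUnique}). I would first try to show $h^0(E\otimes\mathcal{O}(D_i))=1$ from the extension and the $I_Z$-vanishings. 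If that fails, I would bound the fibre dimension by $h^0(E\otimes\mathcal{O}(D_i))-1$ and check that this is already absorbed in the stated count.
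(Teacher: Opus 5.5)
You follow the paper's route: Theorem \ref{dimmax2} is reduced to Propositions \ref{dimensionstratum} and \ref{dimensionstratum2}, which build the same open set of $\mathrm{Hilb}^{l}(\mathbb{F}_e)$, projective bundle of $\mathrm{Ext}^1$, universal extension and classifying map. However, Step 3 is a genuine gap, and it cannot be closed within this method. By your own Step 2, the image has dimension $2l_i+\dim\mathrm{Ext}^1-1=3l_i-\chi_i-1$, where $\chi_i:=\chi(\mathcal{O}_{\mathbb{F}_e}(-2D_i+C_e+\beta F))$. In case 1 this is exact, since the map is injective. Carrying out your substitutions gives
\begin{gather*}
-\chi_1=\tfrac12(2a_1-2)(4b_1+2a_1e-e+2\beta+2),\qquad -\chi_2=\tfrac12(2a_2+2)(4b_2+2a_2e+e-2\beta-2),\\
-\chi_3=\tfrac12(2a_3-2)(2\beta+2a_3e-e-4b_3+2).
\end{gather*}
\begin{itemize}
\item In case 1 the bracket is exactly the one in the statement, so you land one below the claimed dimension. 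There is no constant that can be ``absorbed into $3l$''.
\item In case 2 the brackets do not agree: the stated one is smaller than yours by $2e(a_2+1)(2a_2+3)$. That is harmless for $e\geq 1$, but leaves you one short for $e=0$.
\item In case 3 the stated bracket equals yours plus $(a_3-1)(e-1)$.
\end{itemize}
The paper's proof has the same defect. It writes $\dim V+\dim\mathrm{Ext}^1-\dim\mathbb{P}H^0(E(D))-1$ and then silently drops the $-1$, and its case-2 and case-3 formulas contain errors in the $e$-terms. What this method honestly yields is the bound with the corrected expressions above, minus one, not the statement as written.

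On Step 2, you do not need uniqueness of maximal subbundles. As in the paper, the sequence $0\to\mathcal{O}_{\mathbb{F}_e}\to E(D_i)\to\mathcal{O}_{\mathbb{F}_e}(2D_i-C_e-\beta F)\otimes I_Z\to 0$ gives $h^0(E(D_i))=1$ whenever $2D_i-C_e-\beta F$ is not effective. This holds in case 1, where $b_1\neq 0$, and in case 2, where the $C_e$-coefficient is $-2a_2-1$. So the map is injective in those cases.

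Case 3 is different, and you misidentify the correction term there:
\begin{itemize}
\item $h^0(\mathcal{O}_{\mathbb{F}_e}(-2D_3+C_e+\beta F))$ is always $0$, since both coefficients are negative.
\item What can be nonzero is $\mathrm{Ext}^2$, dual to $H^0(\mathcal{O}_{\mathbb{F}_e}((2a_3-3)C_e+(2b_3-\beta-e-2)F)\otimes I_Z)$. This is the Cayley--Bacharach bundle, and it adds to $\dim\mathrm{Ext}^1$.
\item $2D_3-C_e-\beta F=(2a_3-1)C_e+(2b_3-\beta)F$ is effective. So the fibre through $E$ can have dimension up to $h^0(\mathcal{O}_{\mathbb{F}_e}((2a_3-1)C_e+(2b_3-\beta)F)\otimes I_Z)$, which is not one of the vanishing hypotheses of Theorem \ref{TP3}.
\end{itemize}
Both quantities must be made constant on an open set and entered into the count; the paper omits this as well. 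Finally, to map into the stratum you must restrict to the open locus where $E$ is stable with $S_H(E)=s$. Alternatively, note that the arguments of Theorems \ref{TP1}--\ref{TP3} apply to every locally free extension with $Z$ in your open set.
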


The proofs of  Theorems \ref{dimmax1} and  \ref{dimmax2} make use of the following results:

\begin{Proposition} \label{dimensionstratum}
Let $H=C_e+(e+1)F$  be an ample divisor on $\mathbb{F}_e$, $e \geq 0$ and let $\alpha, \beta \in \{0,1\}$, $c_2 \geq 2$, $s,l > 0$.  Suppose that there exist $a,b \in \mathbb{N}$, $a \neq 0$ such that $s= 2(a-b)-\alpha-\beta$ and $l= c_2-a^2e+ae\alpha-2ab-a\beta+b\alpha >0$.  Let $D:= aC_e-bF$ be a divisor on $\mathbb{F}_e$ and let $Z \subset \mathbb{F}_e$ be a local complete intersection of codimension two and lenght $l$
 satisfiying the hypothesis of  Theorem \ref{TP1}.
   Then, the stratum $M_{\mathbb{F}_e,H}(2;-\alpha C_e-\beta F,c_2;s)$ is non-empty and it has an irredubible subvariety of dimension
   \[\max_{a,b} \left\{3l+  \frac{(2a-\alpha-1)(4b+2ae-\alpha e+2\beta+2)}{2} \right\}\]
where the maximum is taken over all pairs (a,b) satisfying the hypothesis of the proposition. Moreover, 
\[
 \dim\, M_{\mathbb{F}_e}(2;-\alpha C_e -\beta F,c_2;s) \geq  \max_{a,b} \left\{3l+  \frac{(2a-\alpha-1)(4b+2ae-\alpha e+2\beta+2)}{2} \right\}
\]
\end{Proposition}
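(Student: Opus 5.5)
Non-emptiness is immediate from Theorem \ref{TP1}. For any pair $(a,b)$ and $Z$ as in the hypothesis, the theorem gives an $H$-stable bundle $E$ with $c_1(E)=-\alpha C_e-\beta F$, $c_2(E)=c_2$ and $S_H(E)=s$. So $M_{\mathbb{F}_e,H}(2;-\alpha C_e-\beta F,c_2;s)\neq\emptyset$. For the dimension statement I would count parameters of the family of extensions
\[0 \rightarrow \mathcal{O}_{\mathbb{F}_e}(-aC_e+bF) \rightarrow E \rightarrow \mathcal{O}_{\mathbb{F}_e}((a-\alpha)C_e-(b+\beta)F) \otimes I_Z \rightarrow 0\]
for a fixed pair $(a,b)$, and then take the maximum over all admissible pairs.

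\textbf{Step 1 (the parameter space).} Consider the open subset $U\subset \mathrm{Hilb}^l(\mathbb{F}_e)$ of zero-dimensional subschemes $Z$ satisfying the vanishing hypotheses (\ref{TP1a1}). These are open conditions by semicontinuity, and $U$ is non-empty by assumption. It is irreducible of dimension $2l$. Over $U$, form the projective bundle $\mathbb{P}$ whose fibre over $Z$ is $\mathbb{P}\mathrm{Ext}^1(M\otimes I_Z, L)$, where $L=\mathcal{O}_{\mathbb{F}_e}(-aC_e+bF)$ and $M=\mathcal{O}_{\mathbb{F}_e}((a-\alpha)C_e-(b+\beta)F)$. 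The fibre dimension is constant over $U$ after possibly shrinking $U$, so $\mathbb{P}$ is irreducible. By Serre duality, $\mathrm{Ext}^1(M\otimes I_Z,L)\cong H^1(M\otimes L^\vee\otimes K_{\mathbb{F}_e}\otimes I_Z)^\vee$. From $0\to I_Z\to\mathcal{O}\to\mathcal{O}_Z\to0$ this has dimension
\[ l + h^1(L\otimes M^{\vee})-h^0(M\otimes L^\vee\otimes K_{\mathbb{F}_e})+\dots, \]
and Theorem \ref{CH} gives explicit values here. The degree $-2a+\alpha+2$ of $M\otimes L^\vee\otimes K$ along $F$, together with Lemma \ref{HO}, kills the $h^0$ and $h^2$ terms. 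Hence $\dim \mathrm{Ext}^1 = l + h^1(\mathcal{O}_{\mathbb{F}_e}(-(2a-\alpha)C_e+(2b+\beta)F))$. Writing $D=aC_e-bF$, this $h^1$ equals $-\chi(\mathcal{O}(-(2a-\alpha)C_e+(2b+\beta)F))$, since $h^0=0$ and, for $2a-\alpha\geq1$, also $h^2=0$. Riemann--Roch (Theorem \ref{CH}(i)) gives
\[-\Big[(1-2a+\alpha)(2b+\beta+1)-e\tfrac{(2a-\alpha)(1-2a+\alpha)}{2}\Big]=\tfrac{(2a-\alpha-1)(4b+2ae-\alpha e+2\beta+2)}{2}.\]
So $\dim\mathbb{P}=2l+l+\tfrac{(2a-\alpha-1)(4b+2ae-\alpha e+2\beta+2)}{2}-1$.

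\textbf{Step 2 (the classifying map).} Each point of $\mathbb{P}$ gives, by Lemma \ref{CB} and Theorem \ref{TP1}, a locally free $H$-stable $E$ with $S_H(E)=s$. A universal extension on $\mathbb{F}_e\times\mathbb{P}$ then yields a morphism $\varphi:\mathbb{P}\to M_{\mathbb{F}_e,H}(2;-\alpha C_e-\beta F,c_2;s)$. Its image is irreducible, so it remains to bound the fibres. If $E$ lies in the image, then $\dim\varphi^{-1}(E)$ is governed by the choices of an embedding $L\hookrightarrow E$ up to scalars, that is, by $\mathbb{P}H^0(E\otimes L^\vee)$; the subscheme $Z$ and the extension class are then determined by this embedding. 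From the sequence twisted by $L^\vee$ we get $h^0(E\otimes L^\vee)\le 1+h^0(M\otimes L^\vee\otimes I_Z)$. The second term should vanish by the hypotheses (\ref{TP1a1}), since $Z$ lies on no curve of the relevant bidegree. This is the same argument as in cases (i)--(iii) of the proof of Theorem \ref{TP1}. Hence $L$ is the unique maximal subbundle of that class, and the fibres are finite. This gives $\dim \varphi(\mathbb{P})=3l+\tfrac{(2a-\alpha-1)(4b+2ae-\alpha e+2\beta+2)}{2}$, up to the $-1$ from projectivisation, which I expect to absorb using that the extension class is only defined up to scalar. I would reconcile this bookkeeping carefully.

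\textbf{Step 3 (conclusion and main obstacle).} Taking the maximum over admissible $(a,b)$ gives an irreducible subvariety of that dimension, and therefore the lower bound on $\dim M_{\mathbb{F}_e}(2;-\alpha C_e-\beta F,c_2;s)$. I expect Step 2 to be the main obstacle. The hard part is proving $h^0(E\otimes L^\vee)=1$, i.e.\ finiteness of the fibres of $\varphi$ rather than just non-emptiness. The first thing I would try is to reuse the vanishing of $H^0(M\otimes L^\vee\otimes I_Z)$ obtained from the curve-containment hypotheses on $Z$. If that fails in boundary cases, such as $b=\beta=0$, I would shrink $U$ to general $Z$ of length $l$, which only affects the open set and not the dimension count.
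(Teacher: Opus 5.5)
\textbf{Verdict.} Your plan is the paper's proof, with one genuine gap: the $-1$ at the end of Step 2. That gap cannot be closed, because the printed bound is itself off by one.

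\textbf{Same route as the paper.} Both arguments use the open set $V\subset\mathrm{Hilb}^l(\mathbb{F}_e)$ cut out by (\ref{TP1a1}) and the projectivised relative $\mathrm{Ext}^1$ over it. Both then use the universal extension of \cite[Lemma 3.2]{Gottsche} and a classifying map whose fibres are $\mathbb{P}H^0(E(D))$, a point since $h^0(E(D))=1$. Your value $\dim\mathrm{Ext}^1=l+X$, with $X=\frac{(2a-\alpha-1)(4b+2ae-\alpha e+2\beta+2)}{2}$, is also the paper's. (Your intermediate Riemann--Roch line has the sign of the $e$-term wrong, but the final value is right.)

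\textbf{The gap.} You propose to ``absorb'' the $-1$ using that the extension class is defined only up to scalar. That is exactly what passing to $\mathbb{P}\mathrm{Ext}^1$ already does; there is no second scalar to quotient by. Once you know the fibres of $\varphi$ are finite, the image has dimension exactly
\[
2l+(l+X)-1=3l+X-1 .
\]
The paper's proof makes the same count. It writes the bound as $\dim V+\dim\mathrm{Ext}^1-\dim\mathbb{P}H^0(E(D))-1$ with $\dim\mathbb{P}H^0(E(D))=0$. It reaches the stated $3l+X$ only by silently dropping this $-1$ in the final substitution.

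\textbf{The stated bound is false.} Take $e=0$, $\alpha=\beta=0$, $(a,b)=(1,0)$ and $c_2\ge 4$. Then $s=2$, $l=c_2$, $X=1$, a general $Z$ satisfies (\ref{TP1a1}), and the statement claims $\dim M_{\mathbb{F}_0,H}(2;0,c_2;2)\ge 3c_2+1$.
\begin{itemize}
\item Every $E$ in this stratum is an extension of $L^{-1}\otimes I_Z$ by some $L=\mathcal{O}(xC_0+yF)$ with $x+y=-1$ and $l(Z)=c_2+2xy$.
\item For every such $L$ and every $Z$, one of $x,y$ is negative and one is nonnegative. Hence $\mathrm{Hom}(L^{-1}\otimes I_Z,L)=H^0(L^2)=0$ and $\mathrm{Ext}^2(L^{-1}\otimes I_Z,L)=H^0(L^{-2}\otimes K\otimes I_Z)^\vee=0$.
\item These extensions therefore form a projective bundle of dimension
\[
3(c_2+2xy)+(2x+1)^2-1=3c_2-2x(x+1)\le 3c_2 .
\]
\item Only finitely many $L$ occur.
\end{itemize}
So the stratum has dimension at most $3c_2$, and in fact exactly $3c_2$ by your own argument with $L=\mathcal{O}(-C_0)$. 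What your proof actually establishes, and establishes correctly, is the bound $3l+X-1$.

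\textbf{Two smaller points.}
\begin{itemize}
\item Not every point of $\mathbb{P}$ gives a locally free sheaf. The map $\mathrm{Ext}^1(M\otimes I_Z,L)\to H^0(\mathcal{E}xt^1(M\otimes I_Z,L))\cong H^0(\mathcal{O}_Z)$ has kernel $H^1(L\otimes M^\vee)$. You must restrict to the nonempty open set of classes that generate $\mathcal{E}xt^1$ at every point of $Z$; this does not change the count.
\item Your caution about $b=\beta=0$ is justified, and the paper misses it. There $M\otimes L^\vee=\mathcal{O}((2a-\alpha)C_e)$ is effective, contrary to the paper's claim that $2D-\beta F$ is not effective. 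Condition (\ref{TP1a1}) does not force $h^0(M\otimes L^\vee\otimes I_Z)=0$: on $\mathbb{F}_0$ with $a=1$, take $Z$ with at least two points on each of two curves of class $C_0$. So shrinking to general $Z$ is genuinely needed to get $h^0(E\otimes L^\vee)=1$.
\end{itemize}
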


\begin{proof}
We only prove the case $\alpha=0$, the proof of the case $\alpha=1$ follows by similar arguments.
 Let $D:=aC_e-bF$, $(a\neq 0)$  be a divisor  on $\mathbb{F}_e$, $e \geq 0$ such that $a,b$ satifies $s= 2(a-b)-\beta>0$ and  $l = c_2-a^2e-2ab-a\beta >0$.  Let $Z \subset \mathbb{F}_e$ be a local complete intersection of codimension two of lenght $l$ satifying the hypothesis of Theorem \ref{TP1} taking $\alpha=0$.  Let
$Hilb^{l}(\mathbb{P}^2)$ be the Hilbert scheme of zero-dimensional
subschemes of length $l$ and let
$\mathcal{I}_{\mathcal{Z}_l}$ be the ideal sheaf of the universal
subscheme $\mathcal{Z}_l$ in $\mathbb{F}_e \times
Hilb^{l}(\mathbb{F}_e)$.
Let $p_1$, $p_2$ be the projections of $\mathbb{F}_e \times
Hilb^{l}(\mathbb{F}_e)$ on $\mathbb{F}_e$ and
$Hilb^{l}(\mathbb{F}_e)$ respectively. Consider on $\mathbb{F}_e
\times Hilb^{l}(\mathbb{F}_e)$ the sheaf
$\mathcal{H}om(p_{1}^*\mathcal{O}_{\mathbb{F}_e}(D - \beta F) \otimes
\mathcal{I}_{\mathcal{Z}_l},
p_{1}^*\mathcal{O}_{\mathbb{F}_e}(-D))$. Taking higher direct
image we obtain on $Hilb^{l}(\mathbb{F}_e)$ the sheaf:
\[\Gamma := R^1_{p_{{2}_*}}\mathcal{H}om(p_{1}^*\mathcal{O}_{\mathbb{F}_e}(D - \beta F) \otimes
\mathcal{I}_{\mathcal{Z}_l},
p_{1}^*\mathcal{O}_{\mathbb{F}_e}(-D)).\]

From the semicontinuity Theorem \cite[Theorem 12.8]{Hartshorne1}, we have that the sets
\[\begin{aligned}
V_1 &:= \{Z \in Hilb^l(\mathbb{F}_e) : h^0(\mathbb{F}_e,  \mathcal{O}_{\mathbb{F}_e}(aC_e+(2a-2b-\beta-1)F \otimes I_Z)< 1\} \\
V_2 &:= \{Z \in Hilb^l(\mathbb{F}_e) :h^0(\mathbb{F}_e, \mathcal{O}_{\mathbb{F}_e}((2a-1)C_e)+(a-2b-\beta-1)F \otimes I_Z)< 1\}
\end{aligned}\]
are open sets of $Hilb^l(\mathbb{F}_e)$.  By Theorem \ref{TP1}, the set $V:=V_1 \cap V_2$ is non-empty.  Restricting the sheaf $\Gamma$ to $V$ we have that it is locally free because
\[\begin{aligned}
H^0(\mathcal{H}om(p_{1}^*\mathcal{O}_{\mathbb{F}_e}(D - \beta F) & \otimes
\mathcal{I}_{\mathcal{Z}_l},
p_{1}^*\mathcal{O}_{\mathbb{F}_e}(-D))) \cong \\
& Hom(\mathcal{O}_{\mathbb{F}_e}(D - \beta F) \otimes I_Z,
\mathcal{O}_{\mathbb{F}_e}(-D))=0,
\end{aligned}\] 
and
\[\begin{aligned}
\dim \,
Ext^2(\mathcal{O}_{\mathbb{F}_e}(D - \beta F) & \otimes I_Z,
\mathcal{O}_{\mathbb{F}_e}(-D)) = \\
& h^0(\mathbb{F}_e, \mathcal{O}_{\mathbb{P}^2}(2D -2 C_e - (e+\beta+2) F))\otimes I_Z)=0
\end{aligned}\] 
for any $Z
\in V$. Hence, the fiber over $Z \in V$ is
$Ext^1(\mathcal{O}_{\mathbb{F}_e}(D - \beta F) \otimes I_Z,
\mathcal{O}_{\mathbb{F}_e}(-D))$.

Let $\mathbb{P}\Gamma$ denote  the projectivization of the sheaf  $\Gamma$ on $V$.
By \cite[Lemma
3.2]{Gottsche} there exists an exact sequence:
\begin{equation} \label{univextension2}
0 \to (id\times\pi)^*p_{1}^*\mathcal{O}_{\mathbb{F}_e}(-D) \otimes
\mathcal{O}_{\mathbb{F}_e \times \mathbb{P}\Gamma}(1) \to
\mathcal{E} \to
(id\times\pi)^*(p_{1}^*\mathcal{O}_{\mathbb{F}_e}(D-\alpha C_e - \beta F) \otimes
\mathcal{I}_{\mathcal{Z}_l}) \to 0
\end{equation}
on $\mathbb{F}_e \times \mathbb{P}\Gamma$ such that for each $p
\in \mathbb{P}\Gamma$ the restriction $\mathcal{E}_{|_p}$ of
$\mathcal{E}$ to $\mathbb{F}_e \times \{p\}$ is isomorphic to an
extension
\[0 \longrightarrow \mathcal{O}_{\mathbb{F}_e}(-D)  \longrightarrow E  \longrightarrow
\mathcal{O}_{\mathbb{F}_e}(D - \beta F) \otimes I_Z \longrightarrow 0.\]
Define the set
\[U:= \{p \in  \mathbb{P}\Gamma  : \text{$\mathcal{E}_{|_p}$ is stable and $S_H(\mathcal{E}_p)=s$}\}.\]
From Theorem \ref{TP1}, the lower semicontinuity of the
function $S_H$ and the fact that stability is an open condition we
conclude that the set $U$ is non-empty and open in $\mathbb{P}
\Gamma$. Restricting the sequence (\ref{univextension2}) to
$\mathbb{F}_e \times U$  we have, from the universal property of
the moduli space $M_{\mathbb{F}_e}(2; -\beta F,c_2)$, a morphism
\[f_s:U \longrightarrow M_{\mathbb{F}_e}(2; -\beta F,c_2)\]
where $f_s(U)$ is contained in the stratum
$M_{\mathbb{P}^2}(2; -\beta F,c_2;s)$. Hence,
$f_s(U)$,  being the image of an irreducible
variety under a morphism is irreducible.

We can now determine a lower bound for the dimension of the stratum;  
\[
 \dim\, M_{\mathbb{F}_e}(2;-\beta F,c_2;s)  \geq dim \, Im (f_s(U)) = \dim \, U - \dim f_s^{-1}(E) \]
 which is equivalent to:
 \[\begin{aligned}
    \dim\, &  M_{\mathbb{F}_e}(2;-\beta F,c_2;s)  \geq  \\ 
    & \dim \,V + \dim \, Ext^1(
\mathcal{O}_{\mathbb{F}_e}(D - \beta F) \otimes I_Z,
\mathcal{O}_{\mathbb{F}_e}(-D)) - \dim \,\mathbb{P}H^0(E(D))-1. 
 \end{aligned}
\]

Since $V$ is an open set of $Hilb^l(\mathbb{F}_e)$, we have
\[\begin{aligned}
    \dim\, & M_{\mathbb{F}_e} (2;-\beta F,c_2;s)  \geq  \\ & \dim \,Hilb^l(\mathbb{F}_e) + \dim \, Ext^1(
\mathcal{O}_{\mathbb{F}_e}(D - \beta F) \otimes I_Z,
\mathcal{O}_{\mathbb{F}_e}(-D)) - \dim \,\mathbb{P}H^0(E(D))-1,
\end{aligned} 
\]

We now compute the values of \[\dim \, Ext^1(
\mathcal{O}_{\mathbb{F}_e}(D - \beta F) \otimes I_Z,
\mathcal{O}_{\mathbb{F}_e}(-D) )
\text{ and } \dim \,\mathbb{P}H^0(E(D)),\] where $Z \in V$.

Note that by Serre duality
$Ext^1(
\mathcal{O}_{\mathbb{F}_e}(D - \beta F) \otimes I_Z,
\mathcal{O}_{\mathbb{F}_e}(-D))$ is canonically dual to \\
$Ext^1(\mathcal{O}_{\mathbb{F}_e}(-D),
\mathcal{O}_{\mathbb{F}_e}(D-2 C_e - (\beta+e+2) F) \otimes I_Z)$.  Since
$\mathcal{O}_{\mathbb{F}_e}(-D)$ is locally free, then
\[Ext^1(\mathcal{O}_{\mathbb{F}_e}(-D),
\mathcal{O}_{\mathbb{F}_e}(D-2 C_e - (\beta+e+2) F) \otimes I_Z) \cong
H^1(\mathbb{F}_e, \mathcal{O}_{\mathbb{P}^2}(2D-2C_e-(e+2+\beta)F) \otimes I_Z ).\]

By the exact sequence
\[0 \longrightarrow \mathcal{O}_{\mathbb{P}^2}(2D-2C_e-(e+2+\beta)F) \otimes I_Z  \longrightarrow \mathcal{O}_{\mathbb{P}^2}(2D-2C_e-(e+2+\beta)F)
\longrightarrow \mathcal{O}_Z \longrightarrow 0\]
we have that
\begin{equation} \label{dimension}
    h^1(\mathcal{O}_{\mathbb{F}_e}(2D-2C_e-(e+2+\beta)F ) \otimes I_Z)= h^1(\mathcal{O}_{\mathbb{F}_e}(2D-2C_e-(e+2+\beta)F) + h^0(\mathcal{O}_Z).
\end{equation}
  
Since $h^2(\mathbb{F}_e,\mathcal{O}_{\mathbb{F}_e}(2D-2C_e-(e+2+\beta)F) = h^0(\mathbb{F}_e,  -2D+\beta F)=0$, from Theorem \ref{CH}  we have
\[h^1(\mathcal{O}_{\mathbb{F}_e}(2D-2C_e-(e+2+\beta)F )  = \frac{(2a-1)}{2}(4b+2ae-2\beta-2).\]

Therefore:
 \begin{equation}\label{Ext1}
  h^1(\mathcal{O}_{\mathbb{F}_e}  (2D-2C_e-(e+2+\beta)F ) \otimes I_Z) = l(Z)+ \frac{(2a-1)(4b+2ae-2\beta-2)}{2}.
 \end{equation}

Now, we compute $h^0(\mathbb{F}_e,E(D))$. Since $E \in
M_{\mathbb{F}_e}(2; - \beta F,c_2;s)$ it can be written in an extension
\begin{equation}
    0 \longrightarrow \mathcal{O}_{\mathbb{F}_e}(-D) \longrightarrow E \longrightarrow \mathcal{O}_{\mathbb{F}_e}(D -\beta F)\otimes I_Z \longrightarrow 0
\end{equation}
from which we get:
\[0 \longrightarrow \mathcal{O}_{\mathbb{F}_e} \longrightarrow E(D) \longrightarrow \mathcal{O}_{\mathbb{F}_e}(2D -\beta F)\otimes I_Z \longrightarrow 0.\]
Since the divisor $2D -\beta F$ is not effective on $\mathbb{F}_e$, we have
\begin{equation}
    \label{fiber}
h^0(\mathbb{F}_e,E(D))  = 1 + h^0(\mathcal{O}_{\mathbb{F}_e}(2D-\alpha C_e -\beta F)\otimes I_Z) =1.
\end{equation}

Replacing (\ref{Ext1}) and (\ref{fiber}) in (\ref{dimension}), we
have
\[
 \dim\, M_{\mathbb{F}_e}(2;-\alpha C_e -\beta F,c_2;s) \geq  3l+ \frac{(2a-1)(4b+2ae+2\beta+2)}{2}.
\]
Therefore, \[
 \dim\, M_{\mathbb{F}_e}(2; -\beta F,c_2;s) \geq  \max_{a,b} \left\{3l+  \frac{(2a-1)(4b+2ae+2\beta+2)}{2} \right\}
\]
where the maximum is taken over all pairs $(a,b)$ satisfying the hypothesis of the proposition and the proof is complete.
\end{proof}

\begin{Proposition} \label{dimensionstratum2}
\begin{enumerate}
    \item  Let $H=C_e+(e+1)F$  be an ample divisor on $\mathbb{F}_e$, $e \geq 0$ and let $\alpha, \beta \in \{0,1\}$, $c_2 \geq 2$, $s, l \geq 0$.  Suppose that there exist $a,b \in \mathbb{N}$, $b \neq 0$ such that $s= 2(b-a)-\alpha-\beta$ and $l= c_2-a^2e-ae\alpha-2ab+a\beta-b\alpha >0$.  Let $D:= -aC_e+bF$ be a divisor on $\mathbb{F}_e$ and let $Z \subset \mathbb{F}_e$ be a local complete intersection of codimension two and lenght $l$
satisfiying the hypothesis of  Theorem \ref{TP2}.
   Then,  the stratum $M_{\mathbb{F}_e,H}(2;-\alpha C_e-\beta F,c_2;s)$ is non-empty and it has an irredubible subvariety of dimension 
   \[
   \max_{a,b}\left\{3l+  \frac{(2a+\alpha+1)(4b-2ae-\alpha e-2\beta-4e-2)}{2}\right\}.
\]
where the maximum is taken over all pairs $(a,b)$ satisfying the hypothesis of the proposition. Moreover, 
\[
 \dim\, M_{\mathbb{F}_e}(2;-\alpha C_e -\beta F,c_2;s) \geq \max_{a,b}\left\{3l+  \frac{(2a+\alpha+1)(4b-2ae-\alpha e-2\beta-4e-2)}{2}\right\}\]
 
 \item Let $H=C_e+(e+1)F$  be an ample divisor on $\mathbb{F}_e$, $0 \leq e$ and let $\alpha, \beta \in \{0,1\}$, $c_2 \geq 2$, $s,l > 0$.  Suppose that there exist $a,b \in \mathbb{N}-\{0\}$, such that $s= 2(a+b)-\alpha-\beta$ and $l= c_2-a^2e+ae\alpha+2ab-a\beta-b\alpha > 0$. Let $D:= aC_e+bF$ be a divisor on $\mathbb{F}_e$ and let $Z \subset \mathbb{F}_e$, be a local complete intersection of codimension two and lenght  $l$
    satisfiying the hypothesis of  Theorem \ref{TP3}.
   Then, the stratum $M_{\mathbb{F}_e,H}(2;-\alpha C_e-\beta F,c_2;s)$ is non-empty and it has an irredubible subvariety of dimension
   \[ \max_{a,b} \left\{ 3l+  \frac{(2a-\alpha-1)(2\beta+2ae-\alpha -4b+2)}{2}\right\}.\]
 where the maximum is taken over all pairs $(a,b)$ satisfying the hypothesis of the proposition.  Moreover, 
\[
 \dim\, M_{\mathbb{F}_e}(2;-\alpha C_e -\beta F,c_2;s) \max_{a,b} \geq \left\{ 3l+  \frac{(2a-\alpha-1)(2\beta+2ae-\alpha -4b+2)}{2}\right\}.\]
 \end{enumerate}
\end{Proposition}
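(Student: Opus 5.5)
The plan is to copy the argument used for Proposition \ref{dimensionstratum}, replacing Theorem \ref{TP1} by Theorem \ref{TP2} in part (1) and by Theorem \ref{TP3} in part (2). We treat $\alpha=0$ in detail; $\alpha=1$ only changes the quotient line bundle by $-C_e$. In part (1) set $D=-aC_e+bF$. The subbundle is then $\mathcal{O}_{\mathbb{F}_e}(-D)=\mathcal{O}_{\mathbb{F}_e}(aC_e-bF)$ and the quotient is $\mathcal{O}_{\mathbb{F}_e}(D-\alpha C_e-\beta F)\otimes I_Z$. In part (2) set $D=aC_e+bF$, with subbundle $\mathcal{O}_{\mathbb{F}_e}(-aC_e-bF)$.

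\textbf{Step 1 (parameter space).} On $Hilb^l(\mathbb{F}_e)$ define $V$ as the locus where the two $h^0$ conditions (\ref{TP2a2}) hold; in part (2) use the four conditions (\ref{TP3a3}) together with the Cayley--Bacharach condition and item (3) of Lemma \ref{nosecciones}. Each of these is an open condition by semicontinuity, and the hypotheses of the statement make $V$ non-empty. Consider
\[
\Gamma=R^1p_{2*}\mathcal{H}om\bigl(p_1^*\mathcal{O}_{\mathbb{F}_e}(D-\alpha C_e-\beta F)\otimes\mathcal{I}_{\mathcal{Z}_l},p_1^*\mathcal{O}_{\mathbb{F}_e}(-D)\bigr).
\]
$\Gamma$ is locally free on $V$ provided two groups vanish for every $Z\in V$. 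The first is $Hom=H^0(\mathcal{O}_{\mathbb{F}_e}(-2D+\alpha C_e+\beta F))$. The second is $Ext^2$, which is dual to $H^0(\mathcal{O}_{\mathbb{F}_e}(2D-(2+\alpha)C_e-(e+2+\beta)F)\otimes I_Z)$.

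\textbf{Step 2 (universal extension).} By \cite[Lemma 3.2]{Gottsche} there is a universal extension over $\mathbb{F}_e\times\mathbb{P}\Gamma$. Theorem \ref{TP2} (resp. Theorem \ref{TP3}) says that every point of $\mathbb{P}\Gamma$ is locally free, stable and has $S_H=s$. By openness of stability and Lemma \ref{semicontinuous}, the locus $U$ of such points is a non-empty open set, and we obtain a classifying morphism $f_s\colon U\to M_{\mathbb{F}_e,H}(2;-\alpha C_e-\beta F,c_2;s)$. Its image is irreducible, which gives both non-emptiness and the irreducible subvariety.

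\textbf{Step 3 (dimension count).} We have
\[
\dim f_s(U)\geq 2l+\dim Ext^1-h^0(E(D)).
\]
By Serre duality and the structure sequence of $Z$,
\[
\dim Ext^1=l+h^1\bigl(\mathcal{O}_{\mathbb{F}_e}(2D-(2+\alpha)C_e-(e+2+\beta)F)\bigr),
\]
using that $h^0$ of the twisted ideal vanishes on $V$. The term $h^1$ of the line bundle is computed by Theorem \ref{CH}. Its $h^2$ vanishes because $-2D+\alpha C_e+\beta F$ is not effective. It has no $h^0$ when its $C_e$-coefficient is negative (this happens in case (1), where the coefficient is $-2a-2-\alpha$); otherwise one uses (v)/(vi) of Theorem \ref{CH}. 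In all cases we get $h^1=-\chi$ (or $h^0-\chi$). The Riemann--Roch formula $\chi=(x+1)(y+1)-e\,x(x+1)/2$ should produce the stated products $\tfrac12(2a+\alpha+1)(4b-2ae-\alpha e-2\beta-4e-2)$ and $\tfrac12(2a-\alpha-1)(2\beta+2ae-\alpha-4b+2)$.

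Finally, the fiber dimension. Twisting the extension by $D$ gives
\[
0\to\mathcal{O}_{\mathbb{F}_e}\to E(D)\to\mathcal{O}_{\mathbb{F}_e}(2D-\alpha C_e-\beta F)\otimes I_Z\to 0,
\]
so $h^0(E(D))=1$ once the right-hand term has no sections. In case (1) this is automatic, since the divisor has negative $C_e$-coefficient. Adding up yields $3l$ plus the correction term, and maximizing over admissible $(a,b)$ gives the inequality.

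\textbf{Main obstacle.} The delicate part is case (2), where $D=aC_e+bF$ is effective. Here both the $Ext^2$ vanishing and $h^0(\mathcal{O}(2D-\alpha C_e-\beta F)\otimes I_Z)=0$ are not automatic. For the first I would use the Cayley--Bacharach hypothesis on the pair $(\mathcal{O}((2a-2)C_e+(2b-e-2-\beta)F),I_Z)$ together with genericity of $Z$, shrinking $V$ to generic $Z$ of large length (Remark (ii) after Lemma \ref{CB}). For the second I would argue via Lemma \ref{nosecciones}(3). If $h^0(E(D))$ cannot be forced to equal $1$, I would instead subtract its actual value, which only weakens the bound. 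Checking that the Riemann--Roch bookkeeping matches the stated formulas exactly, including the $h^0$ correction from Theorem \ref{CH}(vi), is routine but error-prone.
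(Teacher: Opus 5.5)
Your outline is the paper's own argument: its proof of this proposition just says ``same method as Proposition \ref{dimensionstratum}''. But the bookkeeping you defer as routine is exactly where it breaks, and it cannot be completed, because the construction does not produce the stated numbers.

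\textbf{The count in general.} Put $N:=\mathcal{O}_{\mathbb{F}_e}(2D-\alpha C_e-\beta F+K_{\mathbb{F}_e})$. Since $Hom=0$ in both parts, $\dim Ext^1=l-\chi(N)+h^0(N\otimes I_Z)$. So your own count gives
$\dim f_s(U)=2l+\dim Ext^1-h^0(E(D))=3l-\chi(N)+h^0(N\otimes I_Z)-h^0(E(D))$.
This is at most $3l-\chi(N)-1$: since $-K_{\mathbb{F}_e}$ is effective, $h^0(E(D))\ge 1+h^0(N\otimes I_Z)$. The $-1$ from projectivizing $Ext^1$ never cancels, so ``adding up yields $3l$ plus the correction term'' is an off-by-one slip. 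The paper's proof of Proposition \ref{dimensionstratum} drops the same $-1$.

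\textbf{Riemann--Roch.} It does not give the stated products either. In (1), $-\chi(N)=\tfrac12(2a+\alpha+1)(4b+2ae+\alpha e-2\beta-2)$. In (2), $-\chi(N)=\tfrac12(2a-\alpha-1)(2\beta+2ae-\alpha e-4b+2)$.

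\textbf{Part (1).} For $e\ge1$, the surplus $e(2a+\alpha+1)(2a+\alpha+2)$ of $-\chi(N)$ over the stated term absorbs these losses. So the inequality does hold there, but only after a comparison you did not make. Also, $h^0(E(D))=1$ is not automatic in (1) when $a=\alpha=0$, since then $2D-\beta F=(2b-\beta)F$ is effective.

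For $e=0$ you come out exactly one short, and the claim is actually false. Take $\mathbb{F}_0$ with $\alpha=\beta=0$, $(a,b)=(0,1)$ and $c_2\ge4$. The statement asserts $\dim M_{\mathbb{F}_0,H}(2;0,c_2;2)\ge 3c_2+1$. But every bundle in that stratum is an extension of $L^{-1}\otimes I_Z$ by some $L=\mathcal{O}(xC_0-(x+1)F)$. For each $x$ these extensions form a family of dimension at most $3c_2-2x(x+1)\le 3c_2$.

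\textbf{Part (2).} There are two further errors here.

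First, your formula $\dim Ext^1=l+h^1(N)$ is wrong whenever $N=\mathcal{O}((2a-\alpha-2)C_e+(2b-\beta-e-2)F)$ has sections. The structure sequence of $Z$ gives $h^1(N\otimes I_Z)=l+h^1(N)-h^0(N)+h^0(N\otimes I_Z)$. So vanishing of $H^0(N\otimes I_Z)$ gives $l-\chi(N)$ (as $h^2(N)=0$), not $l+h^1(N)$.

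Second, two vanishings you need do not follow from the hypotheses of Theorem \ref{TP3}: $H^0(N\otimes I_Z)=0$, and $H^0(\mathcal{O}((2a-\alpha)C_e+(2b-\beta)F)\otimes I_Z)=0$ (needed for $h^0(E(D))=1$). Cayley--Bacharach does not make sections through $Z$ vanish, and Lemma \ref{nosecciones}(3) only yields $H^0(E)=0$.

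\textbf{Counterexample to part (2).} Take $e=0$, $\alpha=\beta=0$, $a=b=1$, $c_2=6$, so $l=8$, and let $Z$ be eight general points. All hypotheses of Theorem \ref{TP3} hold:
the four bundles in (\ref{TP3a3}) have $h^0=8,8,6,6$;
the Cayley--Bacharach pair is $(\mathcal{O},I_Z)$;
and Lemma \ref{nosecciones}(3)(i) holds.
Here $N=\mathcal{O}$, so $\dim Ext^1=h^1(I_Z)=7$, and $h^0(E(D))=1+h^0(\mathcal{O}(2C_0+2F)\otimes I_Z)=2$. The construction yields $16+7-1-1=21=\dim M_{\mathbb{F}_0,H}(2;0,6)$. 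The statement instead claims a stratum of dimension at least $3\cdot 8-1=23$, which exceeds the whole moduli space.

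So no filling-in of your Step 3 proves the formulas as written. What the argument actually gives is the bound $3l-\chi(N)+h^0(N\otimes I_Z)-h^0(E(D))$ above, and the stated terms need to be corrected accordingly.
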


\begin{proof}
The proof follows by the same method as in the proof of Proposition \ref{dimensionstratum}. Noting that for item (3), the property of being locally free is an open condition (see \cite[Lemma 2.1.8]{Huybrechts-Lehn}).
\end{proof}

Now we are able to prove Theorems \ref{dimmax1} and \ref{dimmax2}.

\begin{proof}
\textbf{Proof of Theorem \ref{dimmax1} and Theorem \ref{dimmax2}}. The proof follows directly of Proposition \ref{dimensionstratum}, and Proposition \ref{dimensionstratum2}  noting that the maximum is taken over all numbers $a_i,b_i$ that satisfies  the hypothesis of theorem.
\end{proof}

\begin{Corollary} 
Let $D$ be a divisor on $\mathbb{F}_e$, $e \geq 0$.
\begin{enumerate}
    \item Under the hypothesis of Theorem \ref{dimmax1}.  Then, the stratum $M_{\mathbb{F}_e,H}(2;-\beta F +D,c_2;s)$ has an irredubible subvariety of dimension
\[\begin{aligned}
     \max_{a_i,b_i} \{ 3l+ \frac{1}{2} (2a_1-1)(4b_1+2a_1e+2\beta+2),  & 3l+ \frac{1}{2}(2a_2+1)(4b_2-2a_2e-2\beta-4e-2), \\ & 3l+ \frac{1}{2}(2a_3-1)(2\beta+2a_3e-4b_3+2) \}
\end{aligned}\]
where the maximum is taken over all numbers $a_i,b_i$ that satisfies  the hypothesis of Theorem \ref{dimmax1}. Moreover, 
\[\begin{aligned}
   \dim\, & M_{\mathbb{F}_e}(2; -\beta F+D,c_2;s) \geq 
    \max_{a_i,b_i} \{ 3l+ \frac{1}{2} (2a_1-1)(4b_1+2a_1e+2\beta+2), \\ & 3l+ \frac{1}{2}(2a_2+1)(4b_2-2a_2e-2\beta-4e-2), 3l+ \frac{1}{2}(2a_3-1)(2\beta+2a_3e-4b_3+2) \}
\end{aligned}\] 

\item Under the hypothesis of Theorem \ref{dimmax2}.  Then, the stratum $M_{\mathbb{F}_e,H}(2;- C_e-\beta F +D,c_2;s)$ has an irredubible subvariety of dimension
\[\begin{aligned}
     \max_{a_i,b_i}\{
   3l+ \frac{1}{2} (2a_1-2)(4b_1+2a_1e- e+2\beta+2), &
    3l+ \frac{1}{2}(2a_2+2)(4b_2-2a_2e- e-2\beta-4e-2),\\  & 3l+ \frac{1}{2}(2a_3-2)(2\beta+2a_3e -4b_3+1) \}
   \end{aligned}
\]
where the maximum is taken over all numbers $a_i,b_i$ that satisfies  the hypothesis of Theorem \ref{dimmax2}. Moreover, 
\[\begin{aligned}
   \dim\, & M_{\mathbb{F}_e}(2;-\alpha C_e -\beta F+D,c_2;s) \geq 
    \max_{a_i,b_i}\{
   3l+ \frac{1}{2} (2a_1-2)(4b_1+2a_1e- e+2\beta+2), \\
   & 3l+ \frac{1}{2}(2a_2+2)(4b_2-2a_2e- e-2\beta-4e-2), 3l+ \frac{1}{2}(2a_3-2)(2\beta+2a_3e -4b_3+1) \}
   \end{aligned}\]
\end{enumerate}
\end{Corollary}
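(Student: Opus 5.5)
The idea is to reduce the Corollary to Theorems \ref{dimmax1} and \ref{dimmax2} by twisting with a line bundle. Twisting gives an isomorphism between moduli spaces that preserves $S_H$. Concretely, I would consider the map $E \mapsto E \otimes L$ for a suitable line bundle $L$ with $c_1(E\otimes L) = c_1(E) + 2c_1(L)$. For the first item, the natural reading is that $D$ is divisible by two in the relevant sense, and $L = \mathcal{O}_{\mathbb{F}_e}(D/2)$, so that $c_1$ changes from $-\beta F$ to $-\beta F + D$. Otherwise one reduces $-\beta F+D$ to normal form as at the start of Section 3, and the statement is read with $\alpha,\beta$ determined by that normal form.

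\textbf{Step 1: twisting identifies the strata.} Tensoring by $L$ gives an isomorphism
\[
M_{\mathbb{F}_e,H}(2;c_1,c_2) \longrightarrow M_{\mathbb{F}_e,H}\bigl(2;c_1+2c_1(L),\,c_2'\bigr),
\]
with $c_2' = c_2 + c_1\cdot c_1(L) + c_1(L)^2$.
\begin{itemize}
\item \emph{Stability is preserved.} A line subbundle $M\subset E$ corresponds bijectively to $M\otimes L\subset E\otimes L$. Both $c_1(M)\cdot H$ and $\mu_H(E)$ shift by $c_1(L)\cdot H$, so $H$-stability is preserved.
\item \emph{The morphism exists.} Applying this on a family, tensoring the (étale-local) universal family by $p_1^*L$ yields a morphism of moduli spaces. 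The inverse twist by $L^{\vee}$ gives the inverse morphism.
\item \emph{The Segre invariant is preserved.} The paper already records $S_H(E)=S_H(E\otimes L)$. Hence the isomorphism carries the stratum $M_{\mathbb{F}_e,H}(2;c_1,c_2;s)$ onto $M_{\mathbb{F}_e,H}(2;c_1+2c_1(L),c_2';s)$.
\end{itemize}

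\textbf{Step 2: transport the conclusions.} Theorem \ref{dimmax1} provides the irreducible subvariety $f_s(U)$ of the untwisted stratum, together with the dimension bound. The image of $f_s(U)$ under the isomorphism is irreducible and has the same dimension. The lower bound on $\dim M$ therefore transfers verbatim, with the maximum taken over the same $a_i,b_i$. The second item follows identically from Theorem \ref{dimmax2}, with $c_1=-C_e-\beta F$.

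\textbf{Main obstacle: bookkeeping of $c_2$.} The stated formula keeps the same symbol $c_2$ on both sides. So I must either interpret $c_2$ in the Corollary as the second Chern class of the twisted bundle, i.e. $c_2'$, or note that the formula is expressed in the untwisted invariants $l_i$, which depend on the original $c_2$. I would state this explicitly: the relevant quantity is $4c_2-c_1^2$, i.e. the discriminant, which is twist invariant. The numbers $l_i$ are then the lengths of the $Z_i$ after twisting back to normal form.
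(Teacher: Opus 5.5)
Your argument is the same as the paper's. Twisting by a line bundle is an isomorphism of moduli spaces that preserves $H$-stability and $S_H$, so the irreducible subvarieties and dimension bounds of Theorems \ref{dimmax1} and \ref{dimmax2} carry over. Your bookkeeping is actually more careful than the paper's. The paper's map $E\mapsto E\otimes\mathcal{O}_{\mathbb{F}_e}(D)$ lands in first Chern class $-\alpha C_e-\beta F+2D$ and second Chern class $c_2+(-\alpha C_e-\beta F)\cdot D+D^2$, not in the stratum named in the Corollary. Twisting by half of $D$ and tracking $c_2$ (equivalently, the twist-invariant $4c_2-c_1^2$) is exactly what is needed.

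What you should drop is the fallback for $D\notin 2\,\mathrm{Pic}(\mathbb{F}_e)$: it proves nothing about item (1). Since $S_H(E)\equiv c_1(E)\cdot H \pmod 2$ and $C_e\cdot H=F\cdot H=1$, there are two cases.
\begin{itemize}
\item If $D\cdot H$ is odd, every bundle with $c_1=-\beta F+D$ has $S_H\equiv\beta+1$, whereas the $s$ of Theorem \ref{dimmax1} satisfies $s\equiv\beta$. So the stratum is empty and item (1) is simply false; the same parity argument kills item (2).
\item If $D\equiv C_e+F \pmod 2$, the normal form of $-\beta F+D$ is $-C_e-(1-\beta)F$. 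Your reduction then produces the bound of Theorem \ref{dimmax2} with $\beta$ replaced by $1-\beta$, not the displayed formula of item (1).
\end{itemize}

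The statement your Steps 1 and 2 actually establish is the following. Assume $D=2D'$ and twist by $\mathcal{O}_{\mathbb{F}_e}(D')$. On the twisted side, $c_2$ is replaced by $c_2-\beta F\cdot D'+D'^2$ in item (1), and by $c_2+(-C_e-\beta F)\cdot D'+D'^2$ in item (2), where the stray $\alpha$ should be $1$. Equivalently, keep the $l_i$ computed from the untwisted $c_2$. State it that way rather than appealing to a reinterpretation via normal forms.
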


\begin{proof}
We only prove $(1)$, the proof of $(2)$ follows by the same method as in $(1)$. Since $S_H(E) = S_H(E \otimes \mathcal{O}_{\mathbb{F}_e}(D))$, it follows that the map
\[\begin{aligned}
    M_{\mathbb{F}_e,H}(2;-\alpha C_e-\beta F ,c_2;s) &\rightarrow M_{\mathbb{F}_e,H}(2;-\alpha C_e-\beta F +D,c_2;s) \\ 
    E &\mapsto E \otimes \mathcal{O}_{\mathbb{F}_e}(D)
\end{aligned}\]
is an isomorphism.
\end{proof}

\section{Applications  to Brill-Noether Theory and Change of Polarizations}

In this section, we use the previous results to give information about the following problems: 

Let $H_t:= C_e+(e+t)F$, $t> 0$ be an ample divisor on $\mathbb{F}_e$, $e \geq 0$. 
\begin{itemize}

   \item [(i)] Describe the differences between the moduli spaces $M_{\mathbb{F}_e, H_t}(2;-C_e-\beta F,c_2)$ where $\beta \in \{0,1\}$.  

    \item [(ii)] The non-emptiness of some  Brill-Noether loci in the moduli space
$M_{\mathbb{F}_e, H_1}(2;c_1,c_2)$ of stable vector bundles of  rank
$2$ and fixed Chern classes $c_1$ and $c_2$ on $\mathbb{F}_e$. 
\end{itemize}

\subsection{Differences between the moduli spaces $M_{\mathbb{F}_e, H_t}(2;c_1,c_2)$.}

Let $X$ be a smooth, irreducible projective surface. In \cite{Qin}, Qin considered the problem: What is the difference between $M_{X, H_1}(2;c_1,c_2)$ and $M_{X, H_2}(2;c_1,c_2)$ ? where $H_1$, $H_2$ are two different ample line bundles on $X$.

\begin{Definition} \emph{( \cite[Definition3.2]{Costa-Miro-Roig})
Let $C_X$  denote the K\"ahler cone in $Num(X) \otimes \mathbb{R}$ generated by all ample divisors.
\begin{itemize}
\item Let $\xi \in Num(X) \otimes \mathbb{R}$. We define
\[\mathcal{W}^\xi := C_X \cap \{x \in Num(X)\otimes \mathbb{R} : x\xi =0\}.\]
\item Define $\mathcal{W}(c_1,c_2)$ as the set whose elements consist of $\mathcal{W}^\xi$, where $\xi$ is the numerical equivalence class of a divisor $D$ on $X$ such that $\mathcal{O}_X(D+c_1)$ is divisible by $2$ in $Pic(X)$, and that 
\[D^2 < 0, \,\,\,\, c_2+ \frac{D^2-c_1^2}{4}= l(Z)\]
for some locally complete intersection codimension $2$ in $X$.
\item A wall of type $(c_1,c_2)$ is an element in $\mathcal{W}(c_1,c_2)$.  A chamber of type $(c_1,c_2)$ is a connected component of $C_X-\mathcal{W}(c_1,c_2)$.  A $\mathbb{Z}-$chamber of type $(c_1,c_2)$ is the intersection of $Num(X)$ with some chamber of type $(c_1,c_2)$.
\item A face of type $(c_2,c_2)$ is $F = \mathcal{W}^\xi$ is a wall of type $(c_1,c_2)$ and $C$ is a chamber of type $(c_1,c_2)$.
\end{itemize}
We say that a wall $\mathcal{W}^\xi$ of type $(c_1,c_2)$ separates two polarizations $H$ and $H'$, if and only if $\xi \cdot H < 0 < \xi \cdot H'$.}
\end{Definition}

\begin{Definition} \emph{( \cite[Definition 3.4]{Costa-Miro-Roig})
Let $\xi$ be a numerical equivalence class defining a wall of type $(c_1,c_2)$.  Denote by $\xi_{\xi}(c_1,c_2)$  the quasiprojective variety parametrizing rank $2$ vector bundles $E$ on $X$ given by an extension
\[0 \rightarrow \mathcal{O}_X(D) \rightarrow E \rightarrow \mathcal{O}_X(c_1-D) \otimes I_Z \rightarrow 0\]
where  $D$, is a divisor with $2D-c_1 \equiv \xi$ and $Z$ is a locally complete intersection of codimension $2$ of lenght $c_2+ \frac{\xi^2-c_1^2}{4}$.}
\end{Definition}

\begin{Remark} \label{Qin}\emph{
Qin (\cite{Qin}), proved that the moduli space  $M_{X, H}(2;c_1,c_2)$ only depends on the chamber of $H$ and the study of moduli spaces of rank $2$ stable vector bundles with respect to a polarization lying on walls may be reduced to the study of moduli spaces of rank two stable vector bundles with respect to a polarization lying on $\mathbb{Z}-$chambers. Moreover, let $H_1$, $H_2$ be two ample line bundles on $X$ lying on chambers $C_1$ and $C_2$
, sharing a common wall, we have
\begin{equation} \label{change}
    M_{H_1}(2;c_1,c_2) = M_{H_2}(2;c_1,c_2)- ( (\cup_\xi \xi_\xi(c_1,c_2)) \cup (\cup_{\xi} \xi_{-_{\xi}}(c_1,c_2))
\end{equation}
where $\xi$ satisfies $\xi L_1 > 0$, and runs over all numerical equivalences classes which define the common wall $\mathcal{W}$.}
\end{Remark}

The folllowing propositions give information about of the changes between the moduli spaces  $M_{\mathbb{F}_e, H_1}(2;-C_e-\beta F,c_2)$, and $M_{\mathbb{F}_e, H_t}(2;-C_e-\beta F,c_2)$,  where  $H_t:= C_e+(e+t)F$, $t> 0$ is an ample divisor on $\mathbb{F}_e$, $e \geq 0$.

\begin{Proposition} \label{PropChanges}
Let $\beta \in \{0,1\}$, $H_t:= C_e+(e+t)F$, $t> 0$ be an ample divisor on $\mathbb{F}_e$, $e \geq 0$,  and $s:= 2N-\beta-1 >0$, $N \in \mathbb{N}-\{0\}$.
Let $E \in M_{\mathbb{F}_e, H_1}(2;-C_e -\beta F,c_2;s)$.  Then
\begin{itemize}
    \item [(i)] If  $\mathcal{O}_{\mathbb{F}_e}(-aC_e+bF) \subset E$  is maximal and $2tN \leq 2b(1-t)+t+\beta$, then $E \notin M_{\mathbb{F}_e, H_t}(2;-C_e -\beta F,c_2)$.
    
    \item [(ii)] If  $\mathcal{O}_{\mathbb{F}_e}(aC_e-bF) \subset E$  is maximal and $2N \leq 2a(t+1)+t+\beta$, then $E \notin M_{\mathbb{F}_e, H_t}(2;-C_e -\beta F,c_2)$.  
    
    \item [(iii)] If  $\mathcal{O}_{\mathbb{F}_e}(-aC_e-bF) \subset E$  is maximal and $2tN \leq 2b(t-1)+t+\beta$, then $E \notin M_{\mathbb{F}_e, H_t}(2;-C_e -\beta F,c_2)$.
\end{itemize}
Moreover, any $L$ satifying (i), (ii) or (iii) defines a non-empty wall  $\xi:=2L+C_e+ \beta  F$ separating $H_1$ and $H_t$ and $E \in \xi_{\xi}(-C_e-\beta F,c_2)$. 
\end{Proposition}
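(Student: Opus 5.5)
The plan is a direct slope computation. Each of (i), (ii), (iii) should reduce to checking that the $H_1$-maximal subline bundle $L$ becomes $H_t$-destabilizing.

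First I fix the numerics. For $c_1=-C_e-\beta F$ and $H_t=C_e+(e+t)F$ we get $c_1\cdot H_t = e-(e+t)-\beta=-(t+\beta)$, so $\mu_{H_t}(E)=-(t+\beta)/2$. In particular $\mu_{H_1}(E)=-(1+\beta)/2$. Since $L\subset E$ is maximal for $H_1$ and $S_{H_1}(E)=s=2N-\beta-1$, the definition of the Segre invariant gives
\[ c_1(L)\cdot H_1=\frac{c_1\cdot H_1-s}{2}=-N. \]
In each case this lets me eliminate one of $a,b$:
\begin{itemize}
\item[(i)] $L=\mathcal{O}_{\mathbb{F}_e}(-aC_e+bF)$ gives $a=N+b$, and then $c_1(L)\cdot H_t=-at+b$.
\item[(ii)] $L=\mathcal{O}_{\mathbb{F}_e}(aC_e-bF)$ gives $b=a+N$, and then $c_1(L)\cdot H_t=at-b$.
\item[(iii)] $L=\mathcal{O}_{\mathbb{F}_e}(-aC_e-bF)$ gives $a=N-b$, and then $c_1(L)\cdot H_t=-at-b$.
\end{itemize}

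The key step is to substitute these into the destabilizing inequality $c_1(L)\cdot H_t\ge \mu_{H_t}(E)=-(t+\beta)/2$ and clear denominators. In case (i) this becomes exactly $2tN\le 2b(1-t)+t+\beta$. In case (iii) it becomes exactly $2tN\le 2b(t-1)+t+\beta$. In case (ii) one gets $2N\le 2a(t-1)+t+\beta$. This differs from the stated $t+1$, so I would recheck the signs there; for $t\ge 1$ the stated condition should be handled by the same comparison, and this reconciliation is the step I expect to need care. Under the hypothesis, $L$ is therefore a subline bundle with $c_1(L)\cdot H_t\ge\mu_{H_t}(E)$, so $E$ is not $H_t$-stable, i.e. $E\notin M_{\mathbb{F}_e,H_t}(2;-C_e-\beta F,c_2)$.

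For the wall statement I set $\xi:=2L-c_1=2L+C_e+\beta F$.
\begin{itemize}
\item Divisibility: $\xi+c_1=2L$ is divisible by $2$.
\item Signs on the polarizations: $\xi\cdot H_1=2c_1(L)\cdot H_1-c_1\cdot H_1=-2N+1+\beta=-s<0$, while $\xi\cdot H_t=2c_1(L)\cdot H_t+t+\beta\ge 0$ by the inequality just proved. In the boundary case of equality $H_t$ lies on the wall; by Remark \ref{Qin} this reduces to a neighbouring chamber.
\item Negativity: $\xi$ pairs with opposite signs against the ample classes $H_1$ and $H_t$. Hence some ample $\mathbb{R}$-combination of them is orthogonal to $\xi$, and the Hodge index theorem gives $\xi^2<0$ (note $\xi\neq0$).
\item Length: since $L$ is maximal, $E$ sits in $0\to L\to E\to \mathcal{O}(c_1-L)\otimes I_Z\to0$ with $Z$ l.c.i. (Friedman's remark). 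Then $c_2=L\cdot(c_1-L)+l(Z)$. Since $(\xi^2-c_1^2)/4=L^2-L\cdot c_1$, this gives $l(Z)=c_2+(\xi^2-c_1^2)/4$.
\end{itemize}
So $\xi$ defines a nonempty wall of type $(-C_e-\beta F,c_2)$ separating $H_1$ and $H_t$. The extension above exhibits $E\in\xi_\xi(-C_e-\beta F,c_2)$ with $D=L$ and $2D-c_1=\xi$.
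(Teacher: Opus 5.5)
Your strategy is the paper's: use $c_1(L)\cdot H_1=-N$ to eliminate one of $a,b$, then compare $c_1(L)\cdot H_t$ with $\mu_{H_t}(E)=-(t+\beta)/2$. Cases (i) and (iii) come out exactly as stated. Your verification of the wall data ($\xi+c_1=2L$, $\xi\cdot H_1=-s<0$, Hodge index for $\xi^2<0$, $l(Z)=c_2+(\xi^2-c_1^2)/4$ from the saturated sequence, hence $E\in\xi_\xi$) is more complete than the paper, which only asserts it.

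The problem is (ii), and the ``reconciliation'' you defer cannot be carried out. Your value $c_1(L)\cdot H_t=at-b=a(t-1)-N$ is correct. The paper's proof writes $a(t+1)-N$, and then states the resulting inequality in the wrong direction; that is where the printed hypothesis comes from. Since $2a(t+1)+t+\beta$ exceeds $2a(t-1)+t+\beta$ by $4a$, the printed hypothesis is strictly weaker than what destabilization needs whenever $a\ge 1$, for every $t>0$. So no comparison argument can produce $E\notin M_{\mathbb{F}_e,H_t}$ from it.

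In fact (ii) as printed is false: for $t=1$ it asserts $E\notin M_{\mathbb{F}_e,H_1}$, although $E$ is assumed to lie there. Concretely, take $\beta=0$, $N=1$, $L=\mathcal{O}_{\mathbb{F}_e}(C_e-2F)$ (so $a=1$, $b=2$), $M=\mathcal{O}_{\mathbb{F}_e}(-2C_e+2F)$ and $Z\neq\emptyset$.
\begin{itemize}
\item Since $L^\vee\otimes M\otimes K_{\mathbb{F}_e}=\mathcal{O}_{\mathbb{F}_e}(-5C_e+(2-e)F)$ has no sections, Lemma~\ref{CB} gives a locally free extension $0\to L\to E\to M\otimes I_Z\to 0$.
\item Let $\mathcal{O}_{\mathbb{F}_e}(xC_e+yF)$ be a line subbundle with $x+y\ge 0$. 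It has no nonzero map to $L$, since that would make the difference effective of negative $H_1$-degree. Hence it injects into $M\otimes I_Z$.
\item Effectivity of $(-2-x)C_e+(2-y)F$ then forces $x=-2$, $y=2$. The map would then be a nonzero section of $I_Z$, which does not exist.
\end{itemize}
Hence $E$ is $H_1$-stable, $L$ is maximal and $S_{H_1}(E)=1=2N-\beta-1$, while the printed condition at $t=1$ reads $2\le 5$.

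So you should prove (ii) with the corrected hypothesis $2N\le 2a(t-1)+t+\beta$, and say explicitly that the stated version fails. A smaller point: when equality holds, $\xi\cdot H_t=0$, so $H_t$ lies on $\mathcal{W}^\xi$ and the wall does not separate $H_1$ and $H_t$ in the strict sense of the definition. $E$ is then strictly $H_t$-semistable, which still gives $E\notin M_{\mathbb{F}_e,H_t}$, but Remark~\ref{Qin} does not rescue the separation claim.
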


\begin{proof}
 Let $E \in M_{\mathbb{F}_e, H_1}(2;-C_e-\beta F,c_2;s)$, by Remark \ref{NotUnique}, there exists a line bundle $L \subset E$ such that \[s:= -2(c_1(L)\cdot H_1)-\beta-1 > 0. \]
 Let $-N$ denote the value  $c_1(L) \cdot H_1$. Since $E$ is $H_1-$stable and $Pic(\mathbb{F}_e) \cong \mathbb{Z}C_e \oplus \mathbb{Z}F$, we have that 
 
 \begin{itemize}
     \item [(i)] If  $L=\mathcal{O}_{\mathbb{F}_e}(-aC_e+bF)$, $a \neq 0$, then \[\frac{-t-\beta}{2}=\mu_{H_t}(E) \leq c_1(L) \cdot H_t = b(1-t)-tN\]
whenever $2tN \leq 2b(1-t+t+\beta)$. Therefore $E \notin M_{\mathbb{F}_e, H_t}(2;-C_e -\beta F,c_2)$.

     \item [(ii)] If  $L=\mathcal{O}_{\mathbb{F}_e}(aC_e-bF)$, $b\neq 0$, then \[\frac{-t-\beta}{2}=\mu_{H_t}(E) \leq c_1(L) \cdot H_t = a(t+1)-N\]
whenever $2N \geq 2a(t+1)+t+\beta$. Therefore $E \notin M_{\mathbb{F}_e, H_t}(2;-C_e -\beta F,c_2)$.

     \item [(iii)] If  $L=\mathcal{O}_{\mathbb{F}_e}(-aC_e-bF)$, $a,b \neq 0$, then \[\frac{-t-\beta}{2}=\mu_{H_t}(E) \leq c_1(L) \cdot H_t = b(t-1)-tN\]
whenever $2tN \leq 2b(t-1)+t+\beta$. Therefore $E \notin M_{\mathbb{F}_e, H_t}(2;-C_e -\beta F,c_2)$.
 \end{itemize}
Hence, if $L$ satisfies (i), (ii) or (iii), then $E$ is $H_1-$stable but $H_t-$unstable and $E \in \xi_\xi(-C_e-\beta F)$ where $\xi:=2L+C_e+ \beta  F$ defines a non-empty wall of type $(-C_e -\beta F,c_2)$ separating $H_1$ and $H_t$.
\end{proof}

\begin{Proposition}
     Let $\beta \in \{0,1\}$, $H_t:= C_e+(e+t)F$, $t> 0$ be an ample divisor on $\mathbb{F}_e$, $e \geq 0$,  and $s:= 2N-\beta-1 >0$, $N \in \mathbb{N}-\{0\}$.  Let $\xi:= 2L+C_e+\beta F$ satisfying any of the conditions of Proposition \ref{PropChanges}.  Then
     \[
    M_{H_1}(2;c_1,c_2) = M_{H_2}(2;c_1,c_2)- ( (\cup_\xi \xi_\xi(c_1,c_2)) \cup (\cup_{\xi} \xi_{-_{\xi}}(c_1,c_2))
\]
\end{Proposition}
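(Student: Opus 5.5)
The plan is to obtain the statement directly from Qin's wall-crossing formula, Remark \ref{Qin}, combined with Proposition \ref{PropChanges}. The statement is stated loosely: $H_2$ should be read as $H_t$, and $c_1=-C_e-\beta F$. So the precise claim is
\[
M_{\mathbb{F}_e,H_1}(2;-C_e-\beta F,c_2)=M_{\mathbb{F}_e,H_t}(2;-C_e-\beta F,c_2)-\Big(\bigcup_\xi \xi_\xi \cup \bigcup_\xi \xi_{-\xi}\Big),
\]
where $\xi=2L+C_e+\beta F$ runs over the classes produced by conditions (i)--(iii) of Proposition \ref{PropChanges}.

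\textbf{Step 1: each such $\xi$ is a wall of type $(-C_e-\beta F,c_2)$ separating $H_1$ and $H_t$.}
\begin{itemize}
\item Write $L=\mathcal{O}_{\mathbb{F}_e}(D)$. Then $D+c_1$ is related to $\xi$ by $\xi=2D-c_1$, so divisibility by $2$ in $Pic(\mathbb{F}_e)$ holds automatically.
\item Since $E$ is $H_1$-stable, $\xi\cdot H_1<0$.
\item The inequalities in (i)--(iii) give $c_1(L)\cdot H_t\geq \mu_{H_t}(E)$, that is, $\xi\cdot H_t\geq 0$.
\item The Hodge index theorem, applied to a class that is negative on one ample class and non-negative on another, gives $\xi^2<0$.
\item The length condition $l(Z)=c_2+(\xi^2-c_1^2)/4$ follows from computing $c_2$ of the extension $0\to L\to E\to L^\vee(c_1)\otimes I_Z\to 0$.
\item Hence $\mathcal{W}^\xi$ meets the segment from $H_1$ to $H_t$ in the Kähler cone.
\end{itemize}

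\textbf{Step 2: reduce to adjacent chambers and apply Remark \ref{Qin}.} Consider the segment $H_1+\lambda(H_t-H_1)$; it is the family $C_e+(e+1+\lambda(t-1))F$. Only finitely many walls of type $(c_1,c_2)$ cross it. I would order these walls and apply formula (\ref{change}) successively across each one, composing the differences.

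\textbf{Step 3: identify the walls with those from Proposition \ref{PropChanges}.} Every wall crossed by the segment is realized by some $E$ that is stable on one side and destabilized by a sub-line bundle $L$ on the other. After tensoring to normalize $c_1$, such an $L$ has one of the three shapes $-aC_e+bF$, $aC_e-bF$, $-aC_e-bF$. The stability inequality at $H_t$ is then exactly the corresponding condition of Proposition \ref{PropChanges}. Consequently the union over the walls separating $H_1$ and $H_t$ coincides with the union over the $\xi$ in the statement.

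\textbf{Main obstacle.} Qin's formula is stated only for chambers sharing a common wall, and $H_1$ or $H_t$ may lie on a wall. The first thing I would try is to perturb $H_1$ and $H_t$ into $\mathbb{Z}$-chambers, using Qin's reduction of wall-polarizations. I would then check that the iterated removals telescope. This requires that a bundle removed at one wall is not re-added at a later wall, and I would verify it via the ordering of the values $\xi\cdot H$ along the segment.
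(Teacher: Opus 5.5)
Your plan follows the paper's own route: the paper's proof is the single sentence ``follows from Proposition \ref{PropChanges} and Remark \ref{Qin}''. There is, however, a genuine gap, and it cannot be repaired. The identity you set out to prove is false as soon as its hypothesis is non-vacuous.

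The failing step is Step 2, where you use formula (\ref{change}) as if crossing a wall only deletes bundles. That formula misquotes Qin. Use the paper's convention: $\xi_\xi$ consists of non-split extensions $0\to\mathcal{O}(D)\to E\to\mathcal{O}(c_1-D)\otimes I_Z\to 0$ with $2D-c_1\equiv\xi$. Then $\mathcal{O}(D)$ violates $H$-stability exactly when $\xi\cdot H\geq 0$. So for $H_1,H_t$ in adjacent chambers with $\xi\cdot H_1<0<\xi\cdot H_t$, Qin's theorem deletes one family and adds another:
\[
M_{H_1}=\Big(M_{H_t}\setminus\bigcup_\xi \xi_{-\xi}\Big)\cup\bigcup_\xi \xi_\xi .
\]
Your own Step 1 already exhibits the contradiction. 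The bundle $E$ that produces $\xi=2L+C_e+\beta F$ lies in $M_{H_1}$. You use $c_1(L)\cdot H_t\geq\mu_{H_t}(E)$, which is exactly what Proposition \ref{PropChanges} asserts, so $E\notin M_{H_t}$. But the right-hand side of the claimed identity is a subset of $M_{H_t}$, so $E$ lies on the left and not on the right. No ordering of walls, perturbation into $\mathbb{Z}$-chambers, or telescoping can fix this, because the sets $\xi_\xi$ you subtract are precisely the ones that must be added. With several walls between $H_1$ and $H_t$, the successive delete-and-add steps also do not collapse to a single expression indexed by these $\xi$, since a bundle added at one wall may be deleted at a later one.

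Step 3 is unsupported as well. The $\xi$ allowed in the statement come only from bundles in the single stratum $S_{H_1}=s$ with $H_1$-maximal $L$. A wall between $H_1$ and $H_t$ can be realized by bundles in other strata, or by a destabilizing $\mathcal{O}(D)$ that is not $H_1$-maximal. So the two index sets need not coincide. Moreover, the printed condition (ii) is not ``exactly'' the stability inequality: $(aC_e-bF)\cdot H_t=at-b=a(t-1)-N$, not $a(t+1)-N$.

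What Steps 1 and 3 legitimately give is the weaker statement in the last sentence of Proposition \ref{PropChanges}. Each such $\xi$ defines a wall of type $(-C_e-\beta F,c_2)$ separating $H_1$ and $H_t$, and the corresponding bundles lie in $\xi_\xi\subseteq M_{H_1}\setminus M_{H_t}$. An actual comparison of the two moduli spaces must apply Qin's formula in its correct delete-and-add form, wall by wall, and the statement itself needs to be reformulated accordingly.
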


\begin{proof}
The proof follows directly from Proposition \ref{PropChanges} and Remark \ref{Qin}.
\end{proof}

\subsection{Brill-Noether theory}

Let $H$ be an ample divisor on $\mathbb{F}_e$, $e \geq 0$.  For any $k \geq 0$, the subvariety of $M_{\mathbb{F}_e,H}(2;c_1,c_2)$ 
defined as
\[W^k(2;c_1,c_2) := \{E \in M_{\mathbb{F}_e,H}(2;c_1,c_2) : h^0(E) \geq k\},\]
It is called the \textit{$k-$Brill-Noether locus} of the moduli space
$M_{\mathbb{F}_e,H}(2;c_1,c_2)$ (or simply Brill-Noether locus, if
there is no confusion) (see 
\cite{Costa-Miro-Roig} for more details).

The following theorem yields information about the variety
$W^k(2;c_1,c_2)$, in particular shows that $W^k(2;c_1,c_2)$ is a
determinantal variety and, gives a formula for the expected
dimension. 

\begin{Theorem}\cite[Corollary 2.8]{Costa-Miro-Roig} \label{determinantalvariety}
 Let $H$ be an ample divisor on $\mathbb{F}_e$, $e \geq 0$. Let $M_{\mathbb{F}_e,H}(2;c_1,c_2)$ be the moduli space of stable vector bundles of rank $2$ on $\mathbb{F}_e$,  with fixed Chern classes $c_1$, $c_2$.  Then, for any $k \geq 0$, there exists a determinantal variety
 \[W^k(2;c_1,c_2):= \{E \in M_{\mathbb{F}_e,H}(2;c_1,c_2) : h^0(E) \geq k\}.\]
 Moreover, each non-empty irreducible component of $W^k(2;c_1,c_2)$ has dimension greater or equal to the Brill-Noether number on $\mathbb{F}_e$
 \[\rho^k(2;c_1,c_2) := 4c_2-c_1^2-3 - k\left(k-\frac{c_1(c_1-K_{\mathbb{F}_e})}{2}+c_2-2 \right)\]
 and
 \[W^{k+1}(2;c_1,c_2) \subset Sing(W^k(2;c_1,c_2))\]
 whenever $W^k(2;c_1,c_2) \neq M_{\mathbb{F}_e,H}(2;c_1,c_2)$.
\end{Theorem}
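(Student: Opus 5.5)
The statement to prove is Theorem \ref{determinantalvariety}, cited from Costa--Mir\'o-Roig. I would prove it by the standard determinantal construction, using a resolution of the universal family. Work locally in the \'etale topology on $M:=M_{\mathbb{F}_e,H}(2;c_1,c_2)$, so that there is a universal bundle $\mathcal{E}$ on $\mathbb{F}_e\times M$. Let $p_1,p_2$ denote the two projections.

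First I would fix an effective divisor $Y=mC_e+nF$ of large degree, for instance $m,n\gg0$ with $Y$ ample. The aim is $H^1(E(Y))=H^2(E(Y))=0$ and $H^0(E|_Y)=H^1(E|_Y)$'s partner vanishing conditions uniformly, i.e.\ $H^1(E(Y)|_Y)=0$, for every $E$ in the relevant bounded family. Boundedness of the family of $H$-stable bundles with fixed Chern classes, which is part of Maruyama's construction, lets Serre vanishing be applied uniformly.

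The sequence $0\to E\to E(Y)\to E(Y)|_Y\to0$ then gives
\[0\to H^0(E)\to H^0(E(Y))\to H^0(E(Y)|_Y)\to H^1(E)\to 0 .\]
Relativizing, $\mathcal{K}_0:=p_{2*}\mathcal{E}(Y)$ and $\mathcal{K}_1:=p_{2*}(\mathcal{E}(Y)|_{Y\times M})$ are locally free by cohomology and base change, since all higher direct images vanish. They come with a map $\gamma:\mathcal{K}_0\to\mathcal{K}_1$ whose kernel at each point $E$ is $H^0(E)$. Hence $W^k=\{E:\operatorname{rank}\gamma_E\le \operatorname{rk}\mathcal{K}_0-k\}$, which is a determinantal subvariety. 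The local choices glue because the degeneracy loci are intrinsic.

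Next comes the dimension count. By the general theory of degeneracy loci, each component of $W^k$ has codimension at most $k(k-\operatorname{rk}\mathcal{K}_0+\operatorname{rk}\mathcal{K}_1)=k(k-\chi(E))$. This holds because $\operatorname{rk}\mathcal{K}_0-\operatorname{rk}\mathcal{K}_1=\chi(E)$, using $H^2(E)=0$. That vanishing I would justify, when needed, via Serre duality $H^2(E)=H^0(E^\vee\otimes K)^*$ together with stability and $K\cdot H<0$. If it fails, the $H^2$ term can be kept as an extra correction, which only improves the bound. Riemann--Roch on $\mathbb{F}_e$ with $\chi(\mathcal{O})=1$ gives
\[\chi(E)=2+\tfrac{c_1(c_1-K)}{2}-c_2 .\]
Combined with $\dim M=4c_2-c_1^2-3$, this produces exactly $\rho^k$.

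Finally, for the singularity statement I would use the standard fact about generic determinantal loci. At a point of $W^{k+1}$ the corank of $\gamma$ is at least $k+1$, so all $(r-k+1)$-minors vanish to order at least $2$ there. The tangent space to $W^k$ at such a point is then the whole tangent space of $M$. If $W^k\neq M$, then every component of $W^k$ through that point is a proper subvariety, so the point is singular on $W^k$.

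The main obstacle I expect is making $\mathcal{K}_0,\mathcal{K}_1$ locally free uniformly. This is where I would invoke boundedness and choose $Y$ so that both vanishings hold for the whole family.
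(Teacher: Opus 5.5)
The paper gives no proof here; it quotes Costa--Mir\'o-Roig. Your overall route is the standard one: twist by a large divisor $Y$, push forward to get $\gamma:\mathcal{K}_0\to\mathcal{K}_1$, take degeneracy loci, apply Riemann--Roch, and use minors vanishing to second order. There is, however, a genuine gap in how you handle $H^2(E)$, and it affects the construction itself, not just the rank count.

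The uniform vanishing $H^1(E(Y)|_Y)=0$ does not follow from boundedness and Serre vanishing. Boundedness does give $H^1(E(Y))=H^2(E(Y))=0$. Once you have that, the sequence $0\to E\to E(Y)\to E(Y)|_Y\to 0$ yields $H^1(E(Y)|_Y)\cong H^2(E)$, independently of $Y$. So $h^2(E)=0$ for all $E\in M$ is exactly what makes $\mathcal{K}_1$ locally free.

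Your fallback is also backwards. If $h^2\neq 0$, a local resolution has the form $K^0\to K^1\to K^2$. The locus $h^0\geq k$ is still a degeneracy locus of $K^0\to K^1$, but now $\mathrm{rk}\,K^1-\mathrm{rk}\,K^0=\mathrm{rk}\,K^2-\chi(E)$. The codimension bound becomes $k(k-\chi(E)+\mathrm{rk}\,K^2)$, which gives a dimension bound \emph{below} $\rho^k$, not above it.

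What is missing is a dichotomy specific to $\mathbb{F}_e$. For every ample $H=aC_e+bF$ (so $a>0$, $b>ae$) one has $K_{\mathbb{F}_e}\cdot H=ae-2a-2b<0$. A nonzero section of an $H$-stable rank two bundle $G$ saturates to $\mathcal{O}_{\mathbb{F}_e}(D)\subset G$ with $D$ effective, which forces $\mu_H(G)>0$.

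\begin{itemize}
\item Applied to $E^\vee\otimes K_{\mathbb{F}_e}$, which is stable of slope $-\frac{c_1\cdot H}{2}+K_{\mathbb{F}_e}\cdot H$, this shows that $h^2(E)\neq 0$ only if $c_1\cdot H<2K_{\mathbb{F}_e}\cdot H$.
\item Applied to $E$ itself, it shows that $h^0(E)\neq 0$ only if $c_1\cdot H>0$.
\end{itemize}

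Since $c_1$ is fixed on $M$, exactly one of two things happens. Either $h^2(E)=0$ on all of $M$, and your construction goes through verbatim. Or $h^0(E)=0$ on all of $M$, so $W^k=\emptyset$ for $k\ge 1$, $W^0=M$, and there is nothing to prove. You should insert this before choosing $Y$.

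One further caveat concerns the range of $k$. The estimate $\mathrm{codim}\le k(k-\chi(E))$ for the locus $\mathrm{rank}\,\gamma\le \mathrm{rk}\,\mathcal{K}_0-k$ is valid only when $\mathrm{rk}\,\mathcal{K}_0-k\le \mathrm{rk}\,\mathcal{K}_1$, i.e. $k\ge\chi(E)$. For $0<k<\chi(E)$ (with $h^2=0$) one has $W^k=M$ but $\rho^k>\dim M$. So both your argument and the statement as printed need the restriction $k\ge \chi(E)$.

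Your tangent-space argument for $W^{k+1}\subset \mathrm{Sing}(W^k)$ is fine, understood for $W^k$ with its determinantal scheme structure.
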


The following result allows to  establish a relationship between
the Brill-Noether locus $W^k(2;c_1,c_2)$ and the different strata
$M_{\mathbb{F}_e,H}(2;c_1,c_2;s)$:

\begin{Proposition}
\begin{enumerate}
    \item Under the hypotheses of Proposition \ref{dimensionstratum}.  Let $(a,b) \in \mathbb{N}$, $a\neq 0$ be the pair for which the number 
      \begin{equation*} 
          \left\{3l+  \frac{(2a-\alpha-1)(4b+2ae-\alpha e+2\beta+2)}{2} \right \}
      \end{equation*} 
      is maximum. Let $D := aC_e$ be a divisor on $\mathbb{F}_e$.  Then, the Brill-Noether locus $W^{b+1}(2;(2a-\alpha)C_e-\beta F, c_2)$, is non-empty, and 
      \[dim\, W^{b+1}(2;(2a-\alpha)C_e-\beta F, c_2) \geq \left\{3l+  \frac{(2a-\alpha-1)(4b+2ae-\alpha e+2\beta+2)}{2} \right \}.\] 
      Moreover, $\rho^{b+1}(2, (a-\alpha)C_e-\beta F, c_2) < \left\{3l+  \frac{(2a-\alpha-1)(4b+2ae-\alpha e+2\beta+2)}{2} \right \}$ for $c_2 >> 0$.
      
      \item Under the hypotheses of item (1) Proposition \ref{dimensionstratum2}.  Let $(a,b) \in \mathbb{N}$, $b\neq 0$ be the pair for which the number 
      \begin{equation} \label{App1}
          \left\{3l+  \frac{(2a+\alpha+1)(4b-2ae-\alpha e-2\beta-4e-2)}{2} \right \}
      \end{equation} 
      is maximum. Let $D := bF$ be a divisor on $\mathbb{F}_e$.  If $ae \leq 1$,  then the Brill-Noether locus $W^{k}(2;-\alpha C_e+(2b-\beta) F, c_2)$, $k=(a+1)-\frac{ea(a+1)}{2}$ is non-empty, and 
      \[dim \, W^{k}(2;-\alpha C_e+(2b-\beta) F, c_2) \geq \left\{3l+  \frac{(2a+\alpha+1)(4b-2ae-\alpha e-2\beta-4e-2)}{2} \right \}.\]
      Moreover, $\rho^{k}(2, -\alpha C_e+(2b-\beta) F, c_2) < \left\{3l+  \frac{(2a+\alpha+1)(4b-2ae-\alpha e-2\beta-4e-2)}{2} \right \}$ for $c_2 >> 0$.
      
      \item  Under the hypotheses of item (2) Proposition \ref{dimensionstratum2}.  Let $(a,b) \in \mathbb{N}$, $a\neq 0$ be the pair for which the number 
      \begin{equation} 
          \left\{3l+  \frac{(2a-\alpha-1)(2\beta+2ae-\alpha -4b+2)}{2} \right \}
      \end{equation} 
      is maximum. Let $D := aC_e+bF$ be a divisor on $\mathbb{F}_e$.  Then, the Brill-Noether locus $W^{1}(2;(2a-\alpha)C_e-(2b-\beta) F, c_2)$, is non-empty, and 
      \[\dim \, W^{1}(2;(2a-\alpha)C_e+(2b-\beta) F, c_2) \geq \left\{3l+  \frac{(2a-\alpha-1)(2\beta+2ae-\alpha -4b+2)}{2} \right \}.\]
      Moreover, $\rho^{1}(2, (2a-\alpha)C_e+(2b-\beta) F, c_2) < \left\{3l+  \frac{(2a-\alpha-1)(2\beta+2ae-\alpha -4b+2)}{2} \right \}$ for $c_2 >> 0$.
\end{enumerate}
\end{Proposition}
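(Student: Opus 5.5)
The plan is to twist the strata of Proposition \ref{dimensionstratum} and Proposition \ref{dimensionstratum2} by the divisor $D$, and then read off sections from the twisted extension. I treat item (1) in detail; items (2) and (3) follow the same pattern with $D=bF$ and $D=aC_e+bF$.

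\textbf{Setting up the twist.} Under the hypotheses of Proposition \ref{dimensionstratum}, take the optimal pair $(a,b)$. Then there is an irreducible family $f_s(U)$ of $H$-stable bundles $E$ with $c_1(E)=-\alpha C_e-\beta F$, $c_2$, and $S_H(E)=s$. Each member sits in an extension
\[0\to \mathcal{O}_{\mathbb{F}_e}(-aC_e+bF)\to E\to \mathcal{O}_{\mathbb{F}_e}((a-\alpha)C_e-(b+\beta)F)\otimes I_Z\to 0 .\]
I tensor by $\mathcal{O}_{\mathbb{F}_e}(D)$ with $D=aC_e$. By the Corollary following Theorem \ref{dimmax2}, twisting is an isomorphism of strata that preserves stability and $S_H$. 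So $E(D)$ ranges over an irreducible subvariety of $M_{\mathbb{F}_e,H}(2;(2a-\alpha)C_e-\beta F,c_2')$ of at least the same dimension. Here $c_2'$ is the twisted second Chern class; the statement keeps writing $c_2$, and I would note this.

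\textbf{Counting sections.} The twisted extension has sub line bundle $\mathcal{O}_{\mathbb{F}_e}(bF)$, and $h^0(\mathcal{O}_{\mathbb{F}_e}(bF))=b+1$ by Theorem \ref{CH}(v). Since $H^0$ is left exact, $h^0(E(D))\ge b+1$. Hence the image lies in $W^{b+1}$. This gives non-emptiness, and the dimension bound follows from the lower bound in Proposition \ref{dimensionstratum}.

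The same pattern handles the other two items:
\begin{itemize}
\item In item (2), the twist by $bF$ has sub line bundle $\mathcal{O}_{\mathbb{F}_e}(aC_e)$. When $ae\le 1$ we have $\mathcal{O}(aC_e)\cdot C_e=-ae\ge -1$, so Theorem \ref{CH}(v) gives $h^0=\chi=(a+1)-ea(a+1)/2=k$.
\item In item (3), the twist by $aC_e+bF$ has sub line bundle $\mathcal{O}_{\mathbb{F}_e}$, giving $h^0\ge 1$.
\end{itemize}

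\textbf{Comparing with the Brill--Noether number.} I would compute $\rho^{k}$ from Theorem \ref{determinantalvariety}, taking $K=-2C_e-(e+2)F$. The point is that $\rho^k=4c_2-c_1^2-3-k(k-\tfrac{c_1(c_1-K)}{2}+c_2-2)$ has leading behaviour $(4-k)c_2$ in $c_2$. By contrast, the stratum bound $3l+\text{const}$ grows like $3c_2$, since $l=c_2-\text{const}$. So for $k\ge 2$ the inequality holds for $c_2\gg0$.

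\textbf{Main obstacle.} The hard case is $k=1$, which occurs in item (3) and possibly in item (1) when $b=0$. There both sides grow like $3c_2$, so I must compare the constant terms explicitly. I would try first to show that $-c_1^2-3-(-\tfrac{c_1(c_1-K)}{2}-2)$ is strictly smaller than $\tfrac{(2a-\alpha-1)(\dots)}{2}$ minus three times the constant in $l$. If this fails, I would restrict the claim to parameter ranges where it holds. Care is also needed to track the changed $c_2$ after twisting consistently on both sides of the comparison.
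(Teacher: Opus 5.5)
Your first half is what the paper's one-line proof intends. Twisting by $D$ turns the sub line bundle into $\mathcal{O}_{\mathbb{F}_e}(bF)$, $\mathcal{O}_{\mathbb{F}_e}(aC_e)$ or $\mathcal{O}_{\mathbb{F}_e}$, whose sections give $h^0\ge k$. The dimension is then transported from Propositions \ref{dimensionstratum} and \ref{dimensionstratum2}, and you are right that $c_2$ changes under the twist.

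The gap is the ``Moreover'' part. Your leading-order argument needs $k\ge 2$, but that case does not occur at the maximizing pair once $c_2\gg 0$:
\begin{itemize}
\item In item (1), along $a-b=\mathrm{const}$ the number is a concave quadratic in $b$ that is strictly decreasing on $b\ge 0$. So the maximum is at $b=0$ and $k=b+1=1$.
\item In item (2), the number is likewise maximized at $a=0$, so again $k=1$.
\item In item (3), $k=1$ by definition.
\end{itemize}
So everything rests on the constant comparison you leave open. ``Restricting the claim to parameter ranges where it holds'' does not prove the statement.

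That comparison also cannot be completed in the claimed direction. In each of these cases the twisted bundle is a Serre extension $0\to\mathcal{O}_{\mathbb{F}_e}\to E'\to\mathcal{O}_{\mathbb{F}_e}(c_1')\otimes I_Z\to 0$ with $c_2(E')=l$. Riemann--Roch gives $\rho^1=3l-1-\chi(\mathcal{O}_{\mathbb{F}_e}(-c_1'))$. This is exactly the honest dimension $2l+\dim\mathrm{Ext}^1(\mathcal{O}_{\mathbb{F}_e}(c_1')\otimes I_Z,\mathcal{O}_{\mathbb{F}_e})-1$ of the family of such extensions, whenever $h^0(\mathcal{O}_{\mathbb{F}_e}(-c_1'))=h^0(\mathcal{O}_{\mathbb{F}_e}(c_1'+K_{\mathbb{F}_e})\otimes I_Z)=0$. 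So this construction never produces more than $\rho^1$.

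The numbers in items (1) and (3) equal $\rho^1+1$ (in item (3), after reading the misprint $-\alpha$ as $-\alpha e$). This happens only because the proof of Proposition \ref{dimensionstratum} drops the $-1$ coming from projectivizing $\mathrm{Ext}^1$. The strict inequality is then automatic, but the bound it is compared with is one too large, and your first half, which imports that bound, inherits the error. For example, on $\mathbb{F}_0$ take $\alpha=\beta=0$ and $(a,b)=(1,0)$:
\begin{itemize}
\item $H$-stability forces every section of $E'\in W^1(2;2C_e,c_2)$ to vanish in codimension two.
\item Hence $W^1$ is dominated by a $\mathbb{P}^{c_2}$-bundle over $\mathrm{Hilb}^{c_2}$, so $\dim W^1\le 3c_2=\rho^1$.
\item Item (1), however, claims $\dim W^1\ge 3c_2+1$.
\end{itemize}

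In item (2) the quoted formula has the wrong sign on the $e$-terms; the correct value of $\dim\mathrm{Ext}^1-l$ is $\tfrac{(2a+\alpha+1)(4b+2ae+\alpha e-2\beta-2)}{2}$. There the claim fails outright: for $e=2$, $\alpha=\beta=0$, the maximizing pair is $(0,1)$ and $\rho^1(2;2F,c_2)=3c_2>3c_2-3$.

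So the missing step is not merely unproved. With an honest parameter count one gets equality with $\rho^1$, not the strict inequality asserted.
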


\begin{proof}
 The proof follows directly of Proposition \ref{dimensionstratum} and Proposition \ref{dimensionstratum2}.
\end{proof}


\begin{thebibliography}{99}

\bibitem{Aprodu}
{\sc M.\ Aprodu,  V. \ Brînzǎnescu, and M.\ Marchitan.}---
    {\it Rank-two vector bundles on Hirzebruch surfaces}. Cent. Eur. J. Math. 10 (2012), no. 4, 1321–1330. 


\bibitem{Aprodu-Brinzanescu}
{\sc M.\ Aprodu, and  V. \ Brînzǎnescu.}---
    {\it Moduli spaces of vector bundles over ruled surfaces}. Nagoya Math. J. 154 (1999), 111–122.

\bibitem{Costa-Miro-Roig}
 {\sc L.\ Costa, and R.M. \ Miro-Roig.}---
    {\it Brill-Noether theory for moduli spaces of sheaves on algebraic varieties}. Forum Math. 22 (2010), no. 3, 411-432.

\bibitem{Coskun-Huizenga}
{ \sc I. \  Coskun, and J. \ Huizenga. }--- Weak {\it Brill-Noether theorems and globally generated vector bundles on Hirzebruch surfaces.},  Nagoya Math. J. 238 (2020), 1–36. 14J60 (14J26)

\bibitem{Coskun}
{ \sc I. \  Coskun, and J. \ Huizenga. }--- Weak {\it Brill-Noether
for rational surfaces. Local and global methods in algebraic
geometry},  Contemp. Math., 712, Amer. Math. Soc., Providence, RI,
2018.


\bibitem {Friedman}
  {\sc R.\ Friedman.} ---
  {\it Algebraic surfaces and holomorphic vector bundles}. Universitext. Springer-Verlag, New York, 1998. x+328 pp.


\bibitem{Gottsche0}
   {\sc L. \ Gottsche, and A. \  Hirschowitz}.--- {\it  Weak Brill-Noether for vector bundles on the projective plane}.
   Algebraic geometry (Catania, 1993/Barcelona, 1994), Lecture Notes in Pure and Appl. Math., 200, Dekker, New York, 1998.


\bibitem {Gottsche}
  {\sc L.\ Gottsche.} ---
  {\it Change of polarization and Hodge numbers of moduli spaces of torsion free sheaves on surfaces.}
 Mathematische Zeitschrift, 223, 247-260, (1996).


\bibitem{Lange} {\sc H. \ Lange}.--- {\it Zur Klassifikation von
Regelmannigfaltigkeiten.}  Mathematische Annalen. 262 (4),
447-459, (1983).


\bibitem {Lange-Narasimhan}
  {\sc H.\ Lange, and M.S. \ Narasimhan.} ---
  {\it Maximal subbundles of Rank two vector bundles on curves.}
 Mathematishe Annalen, 266, 55-72, (1983).

 \bibitem {LePotier}
  {\sc J.\ Le Potier. } ---
  {\it Lectures on vector bundles}. Translated by A. Maciocia. Cambridge Studies in Advanced Mathematics, 54. Cambridge University Press, Cambridge, 1997. viii+251 pp.


   \bibitem {Hartshorne1}
  {\sc R.\ Hartshorne. } ---
   {\it Algebraic geometry}. Graduate Texts in Mathematics, No. 52. Springer-Verlag, New York-Heidelberg, 1977. xvi+496 pp.

\bibitem {Huybrechts-Lehn}
  {\sc D.\ Huybrechts, and  M. \ Lehn.} ---
   {\it The geometry of moduli spaces of sheaves.} Aspects of Mathematics, E31. Friedr. Vieweg  Sohn, Braunschweig, 1997. xiv+269 pp.

\bibitem{Maruyama}
  {\sc  M.\ Maruyama.} ---
{\it Openness of a family of torsion free sheaves}. J. Math. Kyoto
Univ. 16 (1976), no. 3, 627--637.

\bibitem{Maruyama1}
  {\sc M.\ Maruyama.} ---
  {\it Stable vector bundles on an algebraic surface.}
 Nagoya Math. J. Vol. 58 (1975), 25-68.
 
 \bibitem{Maruyama2}
 {\sc M.\ Maruyama.} ---
 {\it
 On classification of ruled surfaces. } Lectures in Mathematics, Department of Mathematics, Kyoto University, 3 Kinokuniya Book-Store Co., Ltd., Tokyo 1970 iv+75 pp.

\bibitem {Okonek}
  {\sc  C. \, Okonek, M. \, Schneider, and H. \, Spindler.} ---
  {\it Vector bundles on complex projective spaces}. Progress in Mathematics, 3. Birkhäuser, Boston, Mass., 1980. vii+389 pp.
  
\bibitem {Qin}
  {\sc  Qin, \, Z.} ---
  {\it Equivalence classes of polarizations and moduli spaces of sheaves}. J. Diff. Geom. 37 (1993), 397-415.


\bibitem{ALH}
{\sc L. \, Roa-Leguizam\'on, H. Torres-L\'opez, and A. G. \, Zamora} {\it On the Segre invariant for rank two vector bundles on $\mathbb{P}^2$}
Preprint. arXiv:2003.02727 
 
\end{thebibliography}
\end{document}